\newtheorem{theorem}{Theorem}
\newtheorem{lemma}{Lemma}[section]
\newtheorem{corollary}{Corollary}[section]
\newtheorem{example}{Example}[section]
\renewenvironment{proof}{
\noindent{\bf Proof.}\rm} {\mbox{}\hfill\rule{0.5em}{0.809em}\par}
\begin{document}

\title{On formulas  and some combinatorial properties of Schubert Polynomials\footnote{Supported by the NSFC of China (11571121) and the Science and Technology Program of Guangzhou (201707010137).}}

\author{
Zerui Zhang\footnote {Supported by the Innovation Project of Graduate School of South China Normal University.},\ \
Yuqun
Chen\footnote {Corresponding author.} \\
{\small \ School of Mathematical Sciences, South China Normal
University}\\
{\small Guangzhou 510631, P. R. China}\\
{\small 295841340@qq.com, \small yqchen@scnu.edu.cn}
}

\date{}

\maketitle

\noindent{\bf Abstract:}  By applying a Gr\"{o}bner-Shirshov basis of the symmetric group $S_{n}$, we give two formulas for
 Schubert polynomials,  either of which involves only nonnegative monomials.  We also prove some   combinatorial properties of Schubert polynomials.
As applications,
we give two algorithms to calculate the structure constants for Schubert polynomials, one of which
  depends  on Monk's formula.

\noindent{\bf Key words: } divided differences; Schubert polynomials; Gr\"{o}bner-Shirshov bases

\noindent {\bf AMS} Mathematics Subject Classification (2010): 05E05, 05E15, 14M15.




\section{Introduction}\label{Intro}
Schubert polynomials were introduced by  I.N. Bernstein, I.M. Gelfand and S.I. Gelfand  \cite{BGG} and Michel Demazure \cite{Demazure} (in
the context of arbitrary root systems) and were extensively developed by Alain Lascoux, Marcel-Paul
 Sch\"{u}tzenberger \cite{LS1,LS2}.  There are lots of other papers about approaches for the computations of
   Schubert polynomials, for example, Sara C. Billey, William  Jockusch   and
    Richard P. Stanley \cite{BJS1993}, Allen Knutson, Ezra Miller \cite{Grobner geometry},
    Rudolf Winkel \cite{Winkel recursive,Winkel diagram},  Sergey Fomin,  Richard P. Stanley \cite{FS}, for more details, see \cite{Winkel diagram}.

For any $u$ in the symmetric group $S_{n}$, $u$ is associated with a Schubert polynomial, denoted by $ \mathfrak{S}_{u}$. It is well known that
there are identities
$$
\mathfrak{S}_{u}\mathfrak{S}_{v}=\sum_{w} c_{u,v}^{w}\mathfrak{S}_{w},
$$
 where the structure constants $c_{u,v}^{w}$ are  some nonnegative integers \cite{Mac, Fulton}. Though there are algorithms to calculate $c_{u,v}^{w}$, there are still no combinatorial proof. It is an open problem
 to find a combinatorial rule for these coefficients. What people know about these coefficients are limited.   One case where an explicit formula is known  is
 Monk's Formula \cite{Monk}. There are a lot of research on the multiplication of Schubert polynomials, for example, see \cite{SAMI ASSAF,A S,growth diagrams,pieri,Winkel multiplication}.

The aim of this paper is to give another two formulas for Schubert polynomials, to establish some combinatorial properties for Schubert polynomials  and to find  algorithms
 to calculate the structure constants for Schubert polynomials.

Our approach is algebraic,  which is based on a Gr\"{o}bner-Shirshov basis of the symmetric group $S_{n}$
defined by $s_{1},\ldots, s_{n-1}$ subjected to the relations : $s_{i}^{2}=1$,
$s_{i}s_{i+1}s_{i}=s_{i+1}s_{i}s_{i+1}$, $s_{i}s_{j}=s_{j}s_{i}$ ($j>i+1$).
In section 2 we give two formulas for Schubert polynomials. The notations are introduced before the corresponding theorems.  The formulas  take the form:

\textbf{Theorem \ref{formula P}}.  For any $u\in S_{n}$  $(n\geq 2)$, we have
$$\partial_{u}\mathfrak{S}_{w_{0}^{n}}=\sum_{\substack{J_{n-1},\ldots, J_{2},J_{1}  }}X^{u}_{J_{n-1}}X^{u(J_{n-1})}_{J_{n-2}}\cdots X^{u(J_{n-1},J_{n-2},\ldots, J_{2})}_{J_{1}}, $$
where the  summation  is over all the $J_{n-1},\ldots, J_{1}$ such that $ u(J_{n-1},\ldots, J_{l})$ is defined for
any $1\leq l\leq n-1$.

\textbf{Theorem \ref{formula Q}}. For any   $u\in S_{n}$   $(n\geq 2)$, we have
$$\partial_{u}\mathfrak{S}_{w_{0}^{n}}=\sum_{\overrightarrow{T_{n-1}}, \ldots, \overrightarrow{T_{1}}}X_{\overrightarrow{T_{n-1}}}^{u}X_{\overrightarrow{T_{n-2}}}^{u(\overrightarrow{T_{n-1}})}\cdots X_{\overrightarrow{T_{1}}}^{u(\overrightarrow{T_{n-1}},\ldots, \overrightarrow{T_{2}})},$$
where the  summation  is over  all the $\overrightarrow{T_{n-1}}, \ldots, \overrightarrow{T_{1}}$ such that $u(\overrightarrow{T_{n-1}},\ldots,\overrightarrow{T_{l}})$ is defined for
any $1\leq l\leq n-1$.

 One of the corollaries of these two formulas is the well-known  fact that the coefficients of monomials of any Schubert polynomial are nonnegative \cite{Mac,Fulton}. By
Theorem \ref{formula Q},  we develop some combinatorial properties of Schubert polynomials in Section \ref{combinatorial property}.
We analysis how to write down the leading monomial of $\partial_{u}\mathfrak{S}_{w_{0}^{n}}$ with
respect to some order, where $w_{0}^{n}$ is the longest word in $S_{n}$. We also offer an algorithm to find a $u\in S_{n}$ ($n$ large enough) such that
the leading monomial of $\partial_{u}\mathfrak{S}_{w_{0}^{n}}$ is a given commutative word. We show
that
 $\overline{\partial_{t}\mathfrak{S}_u}= \overline{\partial_{t}\overline{\mathfrak{S}_u}}$
 if $\operatorname{deg}_{x_{t}}\overline{\mathfrak{S}_u}>\operatorname{deg}_{x_{t+1}}\overline{\mathfrak{S}_u}$,
  where for any polynomial $f$, $\bar{f}$ means the leading monomial of $f$.
In section \ref{section algorithms}, we show how the properties and formulas we established can
be applied to calculate the structure constants. We also explain how to apply Monk's formula  to the calculation of the structure constants.
As results,
we give two algorithms to calculate the structure constants for the multiplications of  Schubert polynomials.

\section{Two formulas for Schubert polynomials}
The \textbf{symmetric group}, $S_{n}$, consists of all bijections from $\{1,2,\ldots, n\}$ to
itself using composition as the multiplication \cite{Sagan}. It is well known that $S_{n}$ can be
 defined by generators $s_{1},\ldots, s_{n-1}$ with relations: $s_{i}^{2}=1$, $s_{i}s_{i+1}s_{i}=s_{i+1}s_{i}s_{i+1}$, $s_{i}s_{j}=s_{j}s_{i}$ ($j>i+1$),
where  $s_i$ corresponds to the adjacent transposition $(i,i+1)\in S_{n}$ for each $1\leq i\leq  n-1 $.
Let $S=\{s_{i}\mid 1\leq i\leq  n-1\}$ ($s_{1}<s_{2}<\cdots <s_{n-1}$)  and $S^{\ast}$ be a free monoid generated by $S$.
We define the {\textbf{degree lexicographic order}} on $S^{\ast}$ by the following:
for any $u=s_{i_1}s_{i_2}\cdots s_{i_p}, v=s_{j_1}s_{j_2}\cdots s_{j_q}\in S^{\ast}$, where each $s_{i_l},s_{j_t}\in S$,
\begin{equation*}\label{equ0}
u>v \ \Leftrightarrow \ (p,s_{i_1},s_{i_2},\ldots, s_{i_p})>(q,s_{j_1},s_{j_2},\ldots, s_{j_q}) \ \mbox{lexicographically}.
\end{equation*}   We also define the  {\textbf{degree}} of $u$, denoted by $|u|$, to be $p$ if $u=s_{i_1}s_{i_2}\cdots s_{i_p}\in S^{\ast}$.

Using the theory of Gr\"{o}bner-Shirshov bases theory of associative algebras \cite{BokutChenBook},
we know that, under the above definition of $S_{n}$ by generators and relations, $S_{n}$ has a Gr\"{o}bner-Shirshov basis,
with respect to degree lexicographic order on $S^{\ast}$,  as follows:
\begin{enumerate}
\item[(1)] $s_{i}^{2}=1$, $ 1\leq i\leq  n-1 $;
\item[(2)]  $s_{i}s_{j}=s_{j}s_{i}$, $i>j+1, \ 1\leq j<i\leq  n-1 $;
\item[(3)]  $s_{i,j}s_{i}=s_{i-1}s_{i,j}$, $i>j$, where
             $s_{i,j}$ is defined to be $s_{i}s_{i-1}\cdots s_{j}$ if $j\leq i$ and $1$ otherwise ($1$ means the identity element of $S_{n}$).
\end{enumerate}
Then  the follow set
$$
B_{s}^{n}:=\{s_{1,i_{1}}s_{2,i_{2}}\cdots s_{n-1,i_{n-1}}\in S^{\ast} \mid 1\leq i_{j}\leq j+1, 1\leq j \leq  n-1\}
$$
consists of  normal forms of elements of $S_{n}$.
For example, $s_{3,2}s_{5,3}\in B_{s}^{n}$ ($n\geq 6$).

For some historical reason, we also call Gr\"{o}bner basis as Gr\"{o}bner-Shirshov basis for noncommutative cases, for more details, see a survey \cite{survey}.

For any $u\in S^{\ast}$,     we call $u$
a \textbf{{reduced word}}    if for any $v\in S^{\ast}$ with $u=v\in S_{n}$, then $|u|\leq |v|$. For any $u\in S^{\ast}$, let $[u]\in B_{s}^{n}$ be the normal form of $u$ with respect to the above Gr\"{o}bner-Shirshov basis. Since we use degree lexicographic order, we have that $u$ is a reduced word if and only if $|u|=|[u]|$. In other words, we can apply only relations $(2)$ and $(3)$
of the Gr\"{o}bner-Shirshov basis of $S_{n}$ to rewrite $u$ to the normal form $[u]$.
Moreover, the \textbf{length}  of $u$, denoted by  $l(u)$, is defined to be   $|[u]|$. For example, $u=s_{5}s_{4}s_{3}s_{5}s_{4}$ is reduced, for $u$ can be rewritten to $s_{4}s_{3}s_{5}s_{4}s_{3}$ and the latter is a normal form.

From now on, by $u\in S_{n}$, we always assume that $u\in B_{s}^{n}$ unless otherwise specified.

For any $0<n \in \mathbb{N}$, where $\mathbb{N}$ is the set of nonnegative integers, define a group homorphism $\sigma_{n} : S_{n}\longrightarrow S_{n+1}$,
 induced by $s_i\mapsto s_i,\ 1\leq i\leq  n-1$. It is clear that  $\sigma_{n}$ is an embedding, i.e., $S_{n}\subset S_{n+1}$. So we
can define $S_{\infty}=\cup_{n\geq 1} S_{n}$.

Let $\mathbb{Z}[x_1,\ldots,  x_{n}]$ be the free commutative algebra generated
 by $\{x_1,\ldots,  x_{n}\}$ over $\mathbb{Z}$, where $\mathbb{Z}$ is the
 ring of integer numbers. For any polynomial $f\in \mathbb{Z}[x_1,\ldots,  x_{n}]$, for any $i$ between $1$ and $n-1$,
    denote by $s_{i}f$   the result of interchanging $x_{i}$ and $x_{i+1}$ in $f$. Define
the \textbf{divided difference operators} \cite{Fulton} $\partial_{i}$   on the
 polynomial ring $\mathbb{Z}[x_1,\ldots,  x_{n}]$  by the rule:
$$
\partial_{i}(f)=\frac{f-s_{i}f}{x_{i}-x_{i+1}}, \ 1\leq i\leq n-1.
$$
Since  $f-s_{i}f$  is divisible by $x_{i}-x_{i+1}$,  we know that $\partial_{i}(f)$ is still a polynomial in $\mathbb{Z}[x_1,\ldots,  x_{n}]$.
It follows immediately from the definition that for any $f,g\in \mathbb{Z}[x_1,\ldots,  x_{n}]$,
$$
\partial_{i}(fg)=\partial_{i}(f)\cdot g+s_{i}f\cdot \partial_{i}(g).
$$
In particular, if $f=s_{i}f$, then
$\partial_{i}(fg)= f\cdot \partial_{i}(g)$.

Define $\partial_{i}^{0}=id$ (the identity map). Let $p\in \{0,1\}$. Then
$$
\partial_{i}^{p}(fg)=\partial_{i}^{p}(f)\cdot g+ps_{i}f\cdot \partial_{i}(g).
$$
By the definition of $\partial_{t}$ we have that
$$ \partial_{i}(x_{i}^{t}x_{i+1}^{l})=\left\{
\begin{aligned}
&x_{i}^{t-1}x_{i+1}^{l}+x_{i}^{t-2}x_{i+1}^{l+1}+\cdots+x_{i}^{l}x_{i+1}^{t-1} &   & \mbox{ if }  t>l, \\
&0 &   &  \mbox{ if } \ t=l, \\
&-x_{i}^{t}x_{i+1}^{l-1}-x_{i}^{t+1}x_{i+1}^{l-2}-\cdots-x_{i}^{l-1}x_{i+1}^{t} &   &  \mbox{ if } t<l.
\end{aligned}
\right.
$$

Define
 $$
 B_{x}:=\{x_{1}^{k_{1}}\cdots x_{n-1}^{k_{n-1}}\mid k_{i}+i\leq n, 1\leq i\leq n-1\}.
 $$
  Denote  by   $\oplus_{b\in B_{x}}\mathbb{Z}b$ the free $\mathbb{Z}$-module
with $\mathbb{Z}$-basis $B_{x}$.
It follows that for any polynomial $f\in \oplus_{b\in B_{x}}\mathbb{Z}b$,   we have  $\partial_{t} f\in \oplus_{b\in B_{x}}\mathbb{Z}b$ for any $1\leq t\leq n-1$.
The divided difference operators   $\partial_{i}$'s satisfy the
nilCoxeter relations \cite{FS}:
$R=\{\partial_{i}^{2}=0$, $1\leq i\leq n-1$; $\partial_{i}\partial_{i+1}\partial_{i}=\partial_{i+1}\partial_{i}\partial_{i+1}$, $1\leq i\leq n-2$; $\partial_{i}\partial_{j}=\partial_{j}\partial_{i}$, $j>i+1\}$.
It is easy to see that the following relations
\begin{enumerate}
\item[(i)] $\partial_{i}^{2}=0$, $ 1\leq i\leq n-1 $,
\item[(ii)]  $\partial_{i}\partial_{j}=\partial_{j}\partial_{i}$, $i>j+1, \ 1\leq j<i\leq n-1 $,
\item[(iii)]  $\partial_{i,j}\partial_{i}=\partial_{i-1}\partial_{i,j}$, $i>j$, where
             $\partial_{i,j}$ is defined to be  $\partial_{i}\partial_{i-1}\cdots \partial_{j}$ if $j\leq i$ and $id$ otherwise,
\end{enumerate}
form a Gr\"{o}bner-Shirshov basis of the   associative algebra $\mathbb{Z}\langle \partial_{1},\partial_{2},\ldots, \partial_{n-1} |R \rangle$ generated by $\{ \partial_{i} \mid 1\leq i\leq n-1\}$ with relations $R$ over $\mathbb{Z}$.
 This algebra is called  \textbf{nilCoxeter algebra}, and is denoted by $\mathcal{NC}_{n }$.
It follows that a $\mathbb{Z}$-basis of this algebra  is
 $$B_{\partial}:=\{\partial_{1,i_1}\partial_{2,i_2}\cdots \partial_{n-1,i_{n-1}}\mid
 1\leq j\leq n-1, \  1\leq i_j\leq j+1\}.$$
For any  word $u=s_{i_1}s_{i_2}\cdots s_{i_t}\in S^{\ast}$, define
$$
\partial_u:=\partial_{i_1}\partial_{i_2}\cdots \partial_{i_t} \ (\partial_{u}=id \mbox{ if } u =1).
$$
It follows that if $u\in S_{n}\subset S_{\infty}$ is not reduced, then by applying (2) and (3) of the Gr\"{o}bner-Shirshov basis of $S_{n}$ (if necessary),
 $u$ can be rewritten to $v_{1}s_{i}s_{i}v_{2}$ for some $i$.  By applying (ii)
 and (iii) of the Gr\"{o}bner-Shirshov basis of $\mathcal{NC}_{n}$, we
  have   $\partial_u=\partial_{v_{1}s_{i}s_{i}v_{2}}=0$. By similar reasoning, if $u$ and $v$ are
 two reduced words with     $u=v\in S_{n}$, then $\partial_u=\partial_v$.

Let $w_{0}^{n}=s_{1,1}s_{2,1}\cdots s_{n-1,1}\in S_{n}$.    For any $w\in S_{n}$, define
the \textbf{Schubert polynomial} corresponding to $w$ as
$$
\mathfrak{S}_{w}=\mathfrak{S}_{w}(x_{1},\ldots, x_{n-1}):=\partial_{[w^{-1}w_{0}^{n}]}(x_{1}^{n-1}x_{2}^{n-2}\cdots x_{n-2}^{2}x_{n-1}),
$$
where $[w^{-1}w_{0}^{n}]$ is the normal form of $w^{-1}w_{0}^{n}$ with respect to  the above  Gr\"{o}bner-Shirshov basis of $S_{n}$.
In particular, $\mathfrak{S}_{w_{0}^{n}}=x_{1}^{n-1}x_{2}^{n-2}\cdots x_{n-2}^{2}x_{n-1}$ and   $\partial_{u}\mathfrak{S}_{w_{0}^{n}}$ is the Schubert polynomial corresponding to $w_{0}^{n}u^{-1}$.
It is easy to see that, if $w\in S_{n}\subseteq S_{n+1}$, then $[w^{-1}w_{0}^{n+1}]=[w^{-1}w_{0}^{n}]s_{n,1}$, and thus
$\mathfrak{S}_{w}=\partial_{[w^{-1}w_{0}^{n}]}(\mathfrak{S}_{w_{0}^{n}})=\partial_{[w^{-1}w_{0}^{n+1}]}(\mathfrak{S}_{w_{0}^{n+1}})$.
 It is an  not obvious fact   that the coefficients
 of $\partial_{u}\mathfrak{S}_{w_{0}^{n}}$ are nonnegative integers, see \cite{Mac}. We will give two simple formulas for $\partial_{u}\mathfrak{S}_{w_{0}^{n}}$ in the sequel, by either of which follows that the coefficients of $\partial_{u}\mathfrak{S}_{w_{0}^{n}}$
 are nonnegative.

Define
$$ P(i>j):=\left\{
\begin{aligned}
&1 &   &  \mbox{ if }  i>j, \\
&0 &   &  \mbox{otherwise},
\end{aligned}
\right.
$$
$$ P(i\leq j):=\left\{
\begin{aligned}
&1 &   &  \mbox{ if } i\leq j, \\
&0 &   &  \mbox{otherwise},
\end{aligned}
\right.
$$
$$ [i,j]:=\left\{
\begin{aligned}
&\{i,i+1,\ldots, j\} &   &  \mbox{ if }i\leq j, \\
&\emptyset &   &  \mbox{otherwise},
\end{aligned}
\right.
$$
$$ X[i,j]:=\left\{
\begin{aligned}
&x_{i}x_{i+1}\cdots x_{j} &   & \mbox{ if } i\leq j, \\
&1 &   & \mbox{otherwise},
\end{aligned}
\right.
$$
$$ \frac{s_{i,j}}{s_{k}}:=\left\{
\begin{aligned}
&s_{i}\cdots \hat{s_{k}}\cdots s_{j} &   & \mbox{ if } j\leq k\leq i, \\
& s_{i,j} &   &  \mbox{ if } j\leq i< k \mbox{ or } k<j\leq i, \\
& 1 &   &  \mbox{ if } i<j ,
\end{aligned}
\right.
$$
$$ \frac{\partial_{i,j}}{\partial_{k}} :=\left\{
\begin{aligned}
&\partial_{i}\cdots \hat{\partial_{k}}\cdots \partial_{j} &   &  \mbox{ if } j\leq k\leq i, \\
& \partial_{i,j} &   &  \mbox{ if } j\leq i< k \mbox{ or } k<j\leq i, \\
& id &   &  \mbox{ if }i<j ,
\end{aligned}
\right.
$$
$$ \frac{x_{1}^{k_{1}}\cdots x_{n-1}^{k_{n-1}}}{x_{1}^{l_{1}}\cdots x_{n-1}^{l_{n-1}}}:=\left\{
\begin{aligned}
&x_{1}^{k_{1}-l_{1}}\cdots x_{n-1}^{k_{n-1}-l_{n-1}}  &   & \mbox{ if } k_{i}\geq l_{i} \mbox{ for any }
i\in [1,n-1], \\
& \mbox{undefined}  &   &  \mbox{otherwise}.
\end{aligned}
\right.
$$

For any unary  linear operator $\delta_{1}, \delta_{2},\ldots, \delta_{n-1}$  from
$\mathbb{Z}[x_{1}, x_{2},\ldots, x_{n}]$ to $\mathbb{Z}[x_{1}, x_{2},\ldots, x_{n}]$, define
$$
[\delta_{1}x_{1}\delta_{2}x_{2}\cdots \delta_{n-2}x_{n-2}\delta_{n-1}x_{n-1}]_{R}:=\delta_{1}(x_{1}(\delta_{2}(x_{2}(\dots (\delta_{n-2}(x_{n-2}(\delta_{n-1}x_{n-1})))\dots)))) \mbox{ (right normed)}.
$$

\begin{lemma}\label{right normed delta}
For any $t\in [1,n]$, $\delta_{i}\in \{id, \partial_{i}, s_{i}\}$ $(t\leq i\leq n-1)$,
$\delta_{n}\in \{id, \partial_{i}\}$,  we have
$$[\delta_{t}x_{t}\delta_{t+1}x_{t+1}\cdots  \delta_{n}x_{n}]_{R}=p_{t}x_{t}^{q_{t}}x_{t+1}^{q_{t+1}}\cdots x_{n}^{q_{n}},$$
where $p_t, q_{j}\in \{0,1\}$ for any $t\leq j\leq n$.
\end{lemma}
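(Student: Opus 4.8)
The plan is to prove the statement by downward induction on the index, peeling the right-normed product off from its innermost factor $\delta_n(x_n)$ outward. Concretely I would set $g_n:=\delta_n(x_n)$ and, for $t\leq r\leq n-1$, $g_r:=\delta_r(x_r\cdot g_{r+1})$, so that $g_t$ is exactly the right-normed product $[\delta_t x_t\cdots \delta_n x_n]_R$ appearing in the statement. I would then prove by induction on decreasing $r$ that each $g_r$ equals $p_r x_r^{q_r}x_{r+1}^{q_{r+1}}\cdots x_n^{q_n}$ with all $p_r,q_j\in\{0,1\}$, i.e.\ that $g_r$ is a single monomial (possibly zero) all of whose exponents are $0$ or $1$. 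The base case $r=n$ is immediate: $g_n=\delta_n(x_n)$ is $x_n$ when $\delta_n=id$, and equals $1$ for the divided-difference choice by the displayed formula for $\partial_i(x_i^t x_{i+1}^l)$ in the case $t=1,\,l=0$; both are monomials of the required shape.

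For the inductive step I would suppose $g_{r+1}=p\,x_{r+1}^{a}M$, where $a\in\{0,1\}$ is the exponent of $x_{r+1}$, $M$ is a monomial in $x_{r+2},\ldots,x_n$ with $0/1$ exponents, and $p\in\{0,1\}$. Multiplying by $x_r$ gives $x_r g_{r+1}=p\,x_r^{1}x_{r+1}^{a}M$, in which $x_r$ now occurs with exponent exactly $1$. Since each admissible $\delta_r$ affects only $x_r$ and $x_{r+1}$ while $M$ is free of them, we have $\delta_r(x_r g_{r+1})=p\,M\,\delta_r(x_r x_{r+1}^{a})$, using that $s_r$ is a ring automorphism fixing $M$ and that $\partial_r(fg)=f\,\partial_r(g)$ whenever $s_r f=f$. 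It then remains to evaluate the two-variable factor $\delta_r(x_r x_{r+1}^{a})$ in the three cases and check it is a $0/1$-monomial: $id$ leaves $x_r x_{r+1}^{a}$, $s_r$ returns $x_r^{a}x_{r+1}$, and $\partial_r(x_r x_{r+1}^{a})$ equals $1$ if $a=0$ and $0$ if $a=1$, again by the displayed formula with $t=1$, $l=a$. Each outcome is a monomial in $x_r,x_{r+1}$ with coefficient and exponents in $\{0,1\}$, so $g_r$ has the required form and the induction closes.

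The one point requiring care, and the reason the lemma is not purely formal, is that $\partial_i(x_i^t x_{i+1}^l)$ is in general a sum of $|t-l|$ monomials, which would at once violate the ``single monomial'' conclusion. The crux is therefore the invariant that $\partial_r$ is only ever fed $x_r x_{r+1}^{a}$ with $a\leq 1$: multiplication by $x_r$ can raise $\operatorname{deg}_{x_r}$ only to $1$, and the inductive hypothesis guarantees $\operatorname{deg}_{x_{r+1}}g_{r+1}=a\leq 1$, so in the formula one always has $t=1$ and $l\in\{0,1\}$, forcing the sum to collapse to the single term $1$ (when $t>l$) or to vanish (when $t=l$). Maintaining the bounds $\operatorname{deg}_{x_r}g_r\leq 1$ and $\operatorname{deg}_{x_{r+1}}g_r\leq 1$ all the way down the induction is precisely what keeps every intermediate expression a single $0/1$-monomial, and it is the step I would take care to state explicitly.
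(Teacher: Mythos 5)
Your proof is correct and takes essentially the same route as the paper's: a downward induction that peels off one factor $\delta_r x_r$ at a time, with the same three-way case analysis on $\delta_r\in\{id,\, s_r,\, \partial_r\}$ and the same key point that $\partial_r$ is only ever applied to $x_r x_{r+1}^{a}$ with $a\in\{0,1\}$, so the divided difference collapses to a single term or zero. The $0/1$-exponent invariant you state explicitly is exactly what the paper encodes as the factor $(1-q_t)$ in its $\partial_{t-1}$ case.
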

\begin{proof}
Induction on $t$. If $t=n$, it is clear.
Suppose $t\leq n$,
$[\delta_{t}x_{t} \cdots  \delta_{n}x_{n}]_{R}=p_{t}x_{t}^{q_{t}}x_{t+1}^{q_{t+1}}\cdots x_{n}^{q_{n}}$. Then
$[\delta_{t-1}x_{t-1}\delta_{t}x_{t} \cdots  \delta_{n}x_{n}]_{R}
=p_{t}\delta_{t-1}(x_{t-1}x_{t}^{q_{t}}x_{t+1}^{q_{t+1}}\cdots x_{n}^{q_{n}})$$
$.
If $\delta_{t-1}=id$, it is clear. If $\delta_{t-1}= s_{t-1}$, then
$[\delta_{t-1}x_{t-1}\delta_{t}x_{t} \cdots  \delta_{n}x_{n}]_{R}=p_{t}x_{t-1}^{q_{t}}x_{t}x_{t+1}^{q_{t+1}}\cdots x_{n}^{q_{n}}$.
 If $\delta_{t-1}=\partial_{t-1}$, then $[\delta_{t-1}x_{t-1}\delta_{t}x_{t} \cdots  \delta_{n}x_{n}]_{R}=(1-q_{t})p_{t} x_{t+1}^{q_{t+1}}\cdots x_{n}^{q_{n}}=(1-q_{t})p_{t} x_{t-1}^{0}x_{t}^{0}x_{t+1}^{q_{t+1}}\cdots x_{n}^{q_{n}}$.
\end{proof}

\begin{lemma}\label{move Q}
For any $f\in \mathbb{Z}[x_1,\ldots, x_{n}]$, $i,j,k\in \{1,2, \cdots, n-1\}$, we have
\begin{enumerate}
\item[(1)] If $k>i$ or $k<j$ or $j>i$, then
$$\partial_{i,j}(X[1,k]f) = X[1,k]\partial_{i,j}f.$$
\item[(2)] If $i\geq k\geq j$, then
 $$\partial_{i,j}(X[1,k]f)=\sum_{k\leq t\leq i+1}X[1,k-1]x_{i+1}^{P(t>i)}\frac{\partial_{i,j}}{\partial_{t}}f.$$
\end{enumerate}
\end{lemma}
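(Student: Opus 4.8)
The plan is to reduce everything to the two product rules already recorded in the text: the Leibniz rule $\partial_m(gf)=\partial_m(g)\,f+s_m(g)\,\partial_m f$, and its consequence that $s_m g=g$ implies $\partial_m(gf)=g\,\partial_m f$. Iterating the latter gives the preliminary fact I will use repeatedly: if $s_m g=g$ for every $a\le m\le b$, then $\partial_{b,a}(gf)=g\,\partial_{b,a}f$. First I would dispose of part (1) with this fact alone. When $j>i$ the operator $\partial_{i,j}$ is $id$ and both sides equal $X[1,k]f$. When $k<j$, every transposition $s_m$ with $j\le m\le i$ has $m>k$, so $x_m,x_{m+1}$ do not occur in $X[1,k]$ and $s_mX[1,k]=X[1,k]$. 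When $k>i$, every such $m$ has $m+1\le k$, so both $x_m,x_{m+1}$ occur and again $s_mX[1,k]=X[1,k]$. In all three cases $X[1,k]$ is invariant under each relevant $s_m$, and the preliminary fact yields $\partial_{i,j}(X[1,k]f)=X[1,k]\partial_{i,j}f$.

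For part (2), with $j\le k\le i$, I would peel the operators off in two groups. The low block $\partial_{k-1,j}$ uses only transpositions $s_m$ with $m\le k-1$, under each of which $X[1,k]$ is symmetric (both $x_m,x_{m+1}$ occur, as $m+1\le k$); so $\partial_{k-1,j}(X[1,k]f)=X[1,k]h$ where $h:=\partial_{k-1,j}f$ (and $h=f$ when $j=k$, since $\partial_{k-1,k}=id$). Writing $\partial_{i,j}=\partial_{i,k}\partial_{k-1,j}$ reduces the claim to computing $\partial_{i,k}(X[1,k]h)$. Factoring $X[1,k]=X[1,k-1]\,x_k$ and noting that $X[1,k-1]$ is symmetric under every $s_m$ with $k\le m\le i$, the same fact extracts $X[1,k-1]$ to the front, leaving the core quantity $\partial_{i,k}(x_kh)$.

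The heart of the proof is the identity
\[
\partial_{i,k}(x_kh)=\sum_{t=k}^{i}\frac{\partial_{i,k}}{\partial_t}\,h+x_{i+1}\,\partial_{i,k}h,
\]
which I would establish by induction on $i\ge k$. The base case $i=k$ is $\partial_k(x_kh)=h+x_{k+1}\partial_kh$, immediate from $\partial_k x_k=1$ and $s_kx_k=x_{k+1}$. For the step, apply $\partial_i$ to the identity for $\partial_{i-1,k}$: the summands $\frac{\partial_{i-1,k}}{\partial_t}h$ merely gain a leading $\partial_i$, becoming $\frac{\partial_{i,k}}{\partial_t}h$ for $k\le t\le i-1$, while the trailing term produces, via $\partial_i(x_i g)=g+x_{i+1}\partial_i g$ with $g=\partial_{i-1,k}h$, exactly $\partial_{i-1,k}h+x_{i+1}\partial_{i,k}h$. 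The hard part — the one delicate bookkeeping point — will be recognizing that this leftover $\partial_{i-1,k}h$ is precisely the missing $t=i$ summand $\frac{\partial_{i,k}}{\partial_i}h$, so that the sum correctly closes up to $t=i$. Finally I would translate back: since $k\le t\le i$, one has $\frac{\partial_{i,k}}{\partial_t}h=\frac{\partial_{i,j}}{\partial_t}f$ and $\partial_{i,k}h=\partial_{i,j}f$, and reading $x_{i+1}=x_{i+1}^{P(t>i)}$ (equal to $1$ for $t\le i$, and to $x_{i+1}$ for $t=i+1$, where $\frac{\partial_{i,j}}{\partial_{i+1}}=\partial_{i,j}$) repackages the two pieces, after restoring the front factor $X[1,k-1]$, into the single sum $\sum_{k\le t\le i+1}X[1,k-1]\,x_{i+1}^{P(t>i)}\frac{\partial_{i,j}}{\partial_t}f$ claimed in the statement.
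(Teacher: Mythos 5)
Your proposal is correct and takes essentially the same route as the paper: the same initial reduction $\partial_{i,j}(X[1,k]f)=\partial_{i,k}(X[1,k]\partial_{k-1,j}f)$ via the symmetry of $X[1,k]$ under $s_m$ for $m\leq k-1$, followed by induction on $i-k$ driven by the Leibniz rule $\partial_i(x_i g)=g+x_{i+1}\partial_i g$, with the leftover term $\partial_{i-1,k}h$ recognized as the $t=i$ summand $\frac{\partial_{i,k}}{\partial_i}h$ exactly as in the paper's inductive step. Your only cosmetic deviations are extracting the invariant factor $X[1,k-1]$ once before the induction instead of carrying it through each step, and spelling out the three cases of part (1) that the paper dismisses as easy.
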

\begin{proof}
(1) is easy, so we just need to prove (2).  Since
$\partial_{i,j}(X[1,k]f)
=\partial_{i,k}(X[1,k]\partial_{k-1,j}f)$, it is sufficient to
show that $\partial_{i,k}(X[1,k]f)=\sum\limits_{k\leq t\leq i+1}X[1,k-1]x_{i+1}^{P(t>i)}\frac{\partial_{i,k}}{\partial_{t}}f.$

Induction on $i-k$.
If $i-k=0$, then  $i=k$ and
\begin{align*}
&\partial_{i,k}(X[1,k]f)&\\
=&X[1,k-1]\partial_{i}(x_{k}f)&\\
=&X[1,k-1]\partial_{i}x_{k}\cdot  f+X[1,k-1]\cdot s_{i}x_{k}\cdot \partial_{i} f&\\
=&X[1,k-1] \cdot \frac{\partial_{i}}{\partial_{k}} f+X[1,k-1]\cdot  x_{i+1}^{P(k+1>i)}\cdot \frac{\partial_{i}}{\partial_{i+1}} f&\\
=&\sum_{k\leq t\leq i+1}X[1,k-1]x_{i+1}^{P(t>i)}\frac{\partial_{i,k}}{\partial_{t}}f.&
\end{align*}
Suppose the result holds for any $i-k<l$. Let $i-k=l\geq 1$. Then
\begin{align*}
&\partial_{i,k}(X[1,k]f)&\\
=&\partial_{i}\partial_{i-1,k}(X[1,k]f)&\\
=&\sum_{k\leq t\leq i}\partial_{i}(X[1,k-1]x_{i}^{P(t>i-1)}\frac{\partial_{i-1,k}}{\partial_{t}}f)&\\
=&\sum_{k\leq t\leq i-1}\partial_{i}(X[1,k-1]\frac{\partial_{i-1,k}}{\partial_{t}}f)+\partial_{i}(X[1,k-1]x_{i}\frac{\partial_{i-1,k}}{\partial_{i}}f)&\\
=&\sum_{k\leq t\leq i-1}X[1,k-1]\frac{\partial_{i,k}}{\partial_{t}}f+X[1,k-1]\cdot \partial_{i}x_{i} \cdot \frac{\partial_{i,k}}{\partial_{i}}f+X[1,k-1]\cdot s_{i}x_{i} \cdot \partial_{i,k}f&\\
=&\sum_{k\leq t\leq i+1}X[1,k-1]x_{i+1}^{P(t>i)}\frac{\partial_{i,k}}{\partial_{t}}f.&
\end{align*}
\end{proof}

Define $s_{0}= s_{0,i}=1$ for any $i\geq 1$. Given $n\in \mathbb{Z}$  ($ n>1$),
for any $u=s_{1,i_{1}} \cdots s_{n-1,i_{n-1}}\in S_{n}$, for any subset
$
J_{n-1}$ of $   \{l\in [1, n-2]\mid i_{l}\leq l\},
$
 define
$$
\delta_{J_{n-1},q}^{u}=\left\{
\begin{aligned}
&s_{q} &   & \mbox{ if } q\in J_{n-1}, \\
&\partial_{q}^{P(i_{q}\leq q)}  &   &\mbox{ if } q\in [1,n-1]\setminus J_{n-1},
\end{aligned}
\right.
$$
$$
X_{J_{n-1}}^{u}=[\delta_{J_{n-1},1}^{u}x_{1}\delta_{J_{n-1},2}^{u}x_{2}\cdots \delta_{J_{n-1},n-1}^{u}x_{n-1}]_{R},
$$
$$
u(J_{n-1})=\left\{
\begin{aligned}
&[s_{r_1,i_{1}} \cdots s_{r_{n-1},i_{n-1}}] &   &  \mbox{ if } s_{r_1,i_{1}} \cdots s_{r_{n-1},i_{n-1}} \mbox{ is reduced and } X_{J_{n-1}}^{u}\neq 0, \\
&\mbox{undefined} &   &  \mbox{otherwise},
\end{aligned}
\right.
$$
where $r_{q}=q$ if $q\in J_{n-1}$, $r_{q}=q-1$ if $q\in [1,n-1]\setminus J_{n-1}$, $[s_{r_1,i_{1}}s_{r_2,i_{2}}\cdots s_{r_{n-1},i_{n-1}}]\in S_{n-1}$ is the normal form of $s_{r_1,i_{1}}s_{r_2,i_{2}}\cdots s_{r_{n-1},i_{n-1}}$.  Remind that for any $w\in S^{\ast}$, $w$ is reduced if and only if $w$ can be rewritten to its normal form by applying $(2)$ and $(3)$ of the Gr\"{o}bner-Shirshov basis of $S_{n}$.

We   proceed to define $u(J_{n-1},\ldots, J_{n-l-1})$ by induction on $l$.
Suppose that $u(J_{n-1},\ldots, J_{n-l})$ has been defined.
 If $u(J_{n-1},\ldots, J_{n-l})$ is undefined, then
 $J_{n-l-1}$, $X_{J_{n-l-1}}^{u(J_{n-1},\ldots, J_{n-l})}$ and
    $u(J_{n-1},\ldots, J_{n-l-1})$ are undefined. Otherwise, say
  $u(J_{n-1},\ldots, J_{n-l}) =s_{1,j_{1}}\cdots s_{n-l-1,j_{n-l-1}}\in S_{n-l}$. Then for any subset
$J_{n-l-1}$ of $  \{k\in [1, n-l-2]\mid j_{k}\leq k\}$, we can define
$X_{J_{n-l-1}}^{u(J_{n-1},\ldots, J_{n-l})}$ and then define
$u(J_{n-1},\ldots, J_{n-l}, J_{n-l-1})$ to be  $(u(J_{n-1},\ldots, J_{n-l}))( J_{n-l-1})$.

Note that $J_{n-1}$ depends on $u$, $J_{n-2}$ depends on $u(J_{n-1})$, and so on.

\begin{theorem}\label{formula P}
 For any $u=s_{1,i_{1}}\cdots s_{n-1,i_{n-1}}\in S_{n}$  $(n\geq 2)$, we have
$$\partial_{u}\mathfrak{S}_{w_{0}^{n}}=\sum_{\substack{J_{n-1},\ldots, J_{2},J_{1} }}X^{u}_{J_{n-1}}X^{u(J_{n-1})}_{J_{n-2}}\cdots X^{u(J_{n-1},J_{n-2},\ldots, J_{2})}_{J_{1}}, $$
where the  summation  is over all the  $J_{n-1},\ldots, J_{1}$ such that $ {u(J_{n-1},\ldots, J_{l})}$ is defined for
any $1\leq l\leq n-1$.
\end{theorem}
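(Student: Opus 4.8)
The plan is to isolate a single \emph{peeling} identity and then iterate it by induction on $n$. First I would use the factorization $\mathfrak{S}_{w_{0}^{n}}=X[1,n-1]\,\mathfrak{S}_{w_{0}^{n-1}}$, immediate from $\mathfrak{S}_{w_{0}^{n}}=x_{1}^{n-1}\cdots x_{n-1}=X[1,n-1]X[1,n-2]\cdots X[1,1]$ together with the fact that $\mathfrak{S}_{w_{0}^{n-1}}\in\mathbb{Z}[x_{1},\ldots,x_{n-2}]$. The case $n=2$ is a one-line check. For the inductive step it is enough to prove
$$\partial_{u}\mathfrak{S}_{w_{0}^{n}}=\sum_{J_{n-1}}X^{u}_{J_{n-1}}\,\partial_{u(J_{n-1})}\mathfrak{S}_{w_{0}^{n-1}},$$
where the sum runs over all $J_{n-1}\subseteq\{l\in[1,n-2]\mid i_{l}\leq l\}$ for which $u(J_{n-1})$ is defined. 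Granting this, I would apply the induction hypothesis to each $\partial_{u(J_{n-1})}\mathfrak{S}_{w_{0}^{n-1}}$ (legitimate since $u(J_{n-1})\in S_{n-1}$); expanding turns the right-hand side into the nested sum of the statement, and the defined-ness conditions combine correctly because $u(J_{n-1},\ldots,J_{l})=\bigl(u(J_{n-1})\bigr)(J_{n-2},\ldots,J_{l})$.

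To prove the peeling identity I would write $\partial_{u}=\partial_{1,i_{1}}\partial_{2,i_{2}}\cdots\partial_{n-1,i_{n-1}}$ and feed it, block by block from the inside out, against $X[1,n-1]\,\mathfrak{S}_{w_{0}^{n-1}}$, moving each $\partial_{q,i_{q}}$ across the surviving $X[1,\cdot]$ boundary with Lemma \ref{move Q}. Whenever case (2) of that lemma fires, its index $t$ produces two species of term. The branch $t=i+1$ (the $s_{i}f$ part of the Leibniz rule) leaves the block $\partial_{q,i_{q}}$ intact on the inner polynomial while promoting the boundary variable $x_{q}$ to $x_{q+1}$; this is the ``swap'' outcome, recorded by $q\in J_{n-1}$, $\delta^{u}_{J_{n-1},q}=s_{q}$, $r_{q}=q$. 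The branches $t\leq i$ instead peel the leading $\partial_{q}$ onto the boundary and hand the shortened block $\partial_{q-1,i_{q}}$ to the inner polynomial; this is the ``absorb'' outcome, recorded by $q\notin J_{n-1}$, $\delta^{u}_{J_{n-1},q}=\partial_{q}^{P(i_{q}\leq q)}$, $r_{q}=q-1$ (the degenerate block $i_{q}=q+1$ giving $\delta^{u}_{J_{n-1},q}=id$). Reading off these choices over all $q$ gives, on the residual side, the word $s_{r_{1},i_{1}}\cdots s_{r_{n-1},i_{n-1}}$ whose normal form is $u(J_{n-1})$, and, on the boundary side, a squarefree monomial which by Lemma \ref{right normed delta} is exactly the right-normed bracket $X^{u}_{J_{n-1}}$.

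It then remains to reconcile the degenerate terms with the conventions in the definition of $u(J_{n-1})$. If the residual word $s_{r_{1},i_{1}}\cdots s_{r_{n-1},i_{n-1}}$ is not reduced, then $\partial_{u(J_{n-1})}=0$ in $\mathcal{NC}_{n-1}$; if the right-normed bracket collapses, then $X^{u}_{J_{n-1}}=0$. In either case that summand drops out, which is precisely when $u(J_{n-1})$ is declared undefined, so the index set of the sum is correct. One also has to verify that no stray $x_{n-1}$ or $x_{n}$ contaminates the inner factors; this is guaranteed by the degree bounds defining $B_{x}$, since $\mathfrak{S}_{w_{0}^{n-1}}$ lies in $\oplus_{b\in B_{x}}\mathbb{Z}b$ and hence has $x_{n-2}$-degree at most one, which both annihilates the $t=n$ term of the top block and prevents the shifted blocks from raising a variable above the current boundary. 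I expect the genuine obstacle to be this identification step: showing, uniformly in $q$, that the iterated output of Lemma \ref{move Q} regroups into a single right-normed bracket $X^{u}_{J_{n-1}}$ times $\partial_{u(J_{n-1})}\mathfrak{S}_{w_{0}^{n-1}}$, i.e.\ that the per-block ``swap/absorb'' alternatives are faithfully indexed by membership in $J_{n-1}$ and that the passage $s_{r_{1},i_{1}}\cdots s_{r_{n-1},i_{n-1}}\mapsto u(J_{n-1})$ is compatible with $\partial_{(-)}$. Once this is in place the induction on $n$ closes formally.
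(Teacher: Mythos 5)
Your proposal is correct and is essentially the paper's own proof: the same factorization $\mathfrak{S}_{w_{0}^{n}}=X[1,n-1]\,\mathfrak{S}_{w_{0}^{n-1}}$, the same one-step peeling identity $\partial_{u}\mathfrak{S}_{w_{0}^{n}}=\sum_{J_{n-1}}X^{u}_{J_{n-1}}\partial_{u(J_{n-1})}\mathfrak{S}_{w_{0}^{n-1}}$ followed by induction on $n$, the same right-to-left sweep of the blocks $\partial_{t,i_{t}}$ across the $X[1,\cdot]$ boundary, and the same disposal of degenerate summands (non-reduced residual words killed by $\partial_{w}=0$, collapsing brackets matching the undefinedness convention). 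The ``identification step'' you flag as the genuine obstacle is exactly what the paper formalizes by proving the strengthened statement for suffixes $u=s_{t,i_{t}}\cdots s_{n-1,i_{n-1}}$ by downward induction on $t$; note only that in this sweep the boundary index always equals the top index of the incoming block, so Lemma \ref{move Q}(2) degenerates to precisely your two swap/absorb terms $t\in\{q,q+1\}$ rather than a family of branches $t\leq i$ (the paper in fact needs only the commuting case of Lemma \ref{move Q} plus one Leibniz application per block, with the $t=n$ term of the top block dying because $\mathfrak{S}_{w_{0}^{n-1}}$ involves only $x_{1},\ldots,x_{n-2}$ and is hence $s_{n-1}$-invariant), which is what makes the two-way indexing by $J_{n-1}$ come out.
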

\begin{proof}
Induction on $n$.
If $n=2$, then $u=s_{1,i_{1}}$, $i_{1}=$ 1 or 2. For either case, we have  $J_{1}=\emptyset$, $u(J_{1})=1$, $\partial_{u}\mathfrak{S}_{w_{0}^{2}}=\partial_{u}(x_{1})=X[1,i_{1}-1]=X^{u}_{J_{1}}$.

If $n\geq 3$,
we first show that for any $u=s_{t,i_t}s_{t+1,i_{t+1}}\cdots s_{n-1,i_{n-1}}\in S_{n}$,
we have
$$
\partial_{u}\mathfrak{S}_{w_{0}^{n}}
=\sum_{ J_{n-1} \subseteq \{l\in [t,n-2]\mid i_{l}\leq l\}}
X[1,t-1][\delta_{J_{n-1},t}^{u}x_{t}\delta_{J_{n-1},t+1}^{u}x_{t+1}\cdots  \delta_{J_{n-1},n-1}^{u}x_{n-1}]_{R}\cdot
 \partial_{u(J_{n-1})}(\mathfrak{S}_{w_{0}^{n-1}}),
 $$
 where $\delta_{J_{n-1},q}^{u}=s_{q}$ and $ r_{q}=q$ if   $q\in J_{n-1}$;
   $\delta_{J_{n-1},q}^{u}=\partial_{q}^{P(i_{q}\leq q)}$ and $r_{q}=q-1$ if $q\in [t,n-1]\setminus J_{n-1}$;
   $u(J_{n-1})=[s_{r_{t},i_{t}}\cdots s_{r_{n-1},i_{n-1}}]$
 if $s_{r_{t},i_{t}}\cdots s_{r_{n-1},i_{n-1}}$ is reduced and
 $X[1,t-1][\delta_{J_{n-1},t}^{u}x_{t}$ $ \cdots  \delta_{J_{n-1},n-1}^{u}x_{n-1}]_{R}\neq 0$,
 $u(J_{n-1})$ is undefined otherwise and the summation is over all the $J_{n-1}$ such that
 $u(J_{n-1})$ is defined.
Induction on $t$.

Suppose  $t=n-1$. If  $i_{n-1}=n$, then $u=1$, $J_{n-1}\subseteq \emptyset$, $u(J_{n-1})=1$. Therefore
  $$
  \partial_{u}\mathfrak{S}_{w_{0}^{n}}=
  X[1,n-2]  x_{n-1}\cdot \mathfrak{S}_{w_{0}^{n-1}}
  =\sum_{ J_{n-1} \subseteq \{l\in [n-1,n-2]\mid i_{l}\leq l\}}X[1,n-2]\delta_{J_{n-1},n-1}^{u}x_{n-1} \cdot \partial_{u(J_{n-1})} \mathfrak{S}_{w_{0}^{n-1}}.
  $$
If   $i_{n-1}\leq  n-1$, then $u=s_{n-1,i_{n-1}}$, $J_{n-1}\subseteq \emptyset$, $u(J_{n-1})=s_{n-2,i_{n-1}}$. By Lemma \ref{move Q}, we have
\begin{align*}
&\partial_{u}\mathfrak{S}_{w_{0}^{n}}&\\
=&\partial_{n-1} \partial_{n-2,i_{n-1}}(X[1,n-1]\cdot \mathfrak{S}_{w_{0}^{n-1}})&\\
=&\partial_{n-1} (X[1,n-1]\cdot \partial_{n-2,i_{n-1}} \mathfrak{S}_{w_{0}^{n-1}})&\\
=&X[1,n-2]\partial_{n-1} (x_{n-1}\cdot \partial_{n-2,i_{n-1}} \mathfrak{S}_{w_{0}^{n-1}})&\\
=&X[1,n-2]\partial_{n-1} (x_{n-1}) \cdot \partial_{n-2,i_{n-1}} \mathfrak{S}_{w_{0}^{n-1}}
+X[1,n-2] s_{n-1}x_{n-1}\cdot \partial_{n-1}(\partial_{n-2,i_{n-1}} \mathfrak{S}_{w_{0}^{n-1}})&\\
=&X[1,n-2]\partial_{n-1} (x_{n-1}) \cdot \partial_{n-2,i_{n-1}} \mathfrak{S}_{w_{0}^{n-1}}&\\
=&\sum_{ J_{n-1} \subseteq \{l\in [n-1,n-2]\mid i_{l}\leq l\}}
X[1,n-2]\delta_{J_{n-1},n-1}^{u}x_{n-1} \cdot \partial_{u(J_{n-1})} \mathfrak{S}_{w_{0}^{n-1}},&
\end{align*}
for $  \partial_{n-1}(\partial_{n-2,i_{n-1}} \mathfrak{S}_{w_{0}^{n-1}})=0$.

Suppose $t< n-1$. If $i_{t}=t+1$, then  $u=s_{t+1,i_{t+1}}\cdots s_{n-1,i_{n-1}}$,
$\{l\in [t,n-2]\mid i_{l}\leq l\}=\{l\in [t+1,n-2]\mid i_{l}\leq l\}$. So
\begin{align*}
&\partial_{u}\mathfrak{S}_{w_{0}^{n}}&\\
=&\partial_{s_{t+1,i_{t+1}}\cdots s_{n-1,i_{n-1}}}\mathfrak{S}_{w_{0}^{n}}&\\
=&\sum_{ J_{n-1} \subseteq \{l\in [t+1,n-2]\mid i_{l}\leq l\}}
X[1,t][\delta_{J_{n-1},t+1}^{u}x_{t+1}\delta_{J_{n-1},t+2}^{u}x_{t+2}\cdots  \delta_{J_{n-1},n-1}^{u}x_{n-1}]_{R}\cdot
 \partial_{u(J_{n-1})}\mathfrak{S}_{w_{0}^{n-1}}&\\
 =&\sum_{ J_{n-1} \subseteq \{l\in [t,n-2]\mid i_{l}\leq l\}}
X[1,t-1][\delta_{J_{n-1},t}^{u}x_{t}\delta_{J_{n-1},t+1}^{u}x_{t+1}\cdots  \delta_{J_{n-1},n-1}^{u}x_{n-1}]_{R}\cdot
 \partial_{u(J_{n-1})}\mathfrak{S}_{w_{0}^{n-1}}.&
\end{align*}

If $i_{t}\leq t$, let $u_{1}=s_{t+1,i_{t+1}}\cdots s_{n-1,i_{n-1}}$. Then
\begin{align*}
&\partial_{u}\mathfrak{S}_{w_{0}^{n}}&\\
=&\partial_{t,i_{t}}\partial_{ u_{1}}\mathfrak{S}_{w_{0}^{n}}&\\
=&\partial_{t,i_{t}}(\sum_{ J_{n-1} \subseteq \{l\in [t+1,n-2]\mid i_{l}\leq l\}}
X[1,t][\delta_{J_{n-1},t+1}^{u_1}x_{t+1}\cdots  \delta_{J_{n-1},n-1}^{u_1}x_{n-1}]_{R}\cdot
 \partial_{u_{1}(J_{n-1})}\mathfrak{S}_{w_{0}^{n-1}})&\\
 =&\partial_{t}(\sum_{ J_{n-1} \subseteq \{l\in [t+1,n-2]\mid i_{l}\leq l\}}
X[1,t][\delta_{J_{n-1},t+1}^{u_1}x_{t+1}\cdots  \delta_{J_{n-1},n-1}^{u_1}x_{n-1}]_{R}\cdot
 \partial_{t-1,i_{t}}\partial_{u_{1}(J_{n-1})}\mathfrak{S}_{w_{0}^{n-1}})&\\
  =&X[1,t-1]\partial_{t}(\sum_{ J_{n-1} \subseteq \{l\in [t+1,n-2]\mid i_{l}\leq l\}}
x_{t}[\delta_{J_{n-1},t+1}^{u_1}x_{t+1}\cdots  \delta_{J_{n-1},n-1}^{u_1}x_{n-1}]_{R}\cdot
 \partial_{t-1,i_{t}}\partial_{u_{1}(J_{n-1})}\mathfrak{S}_{w_{0}^{n-1}})&\\
  =&\sum_{ J_{n-1} \subseteq \{l\in [t+1,n-2]\mid i_{l}\leq l\}}
X[1,t-1]\partial_{t}(x_{t}[\delta_{J_{n-1},t+1}^{u_1}x_{t+1}\cdots  \delta_{J_{n-1},n-1}^{u_1}x_{n-1}]_{R})\cdot
 \partial_{t-1,i_{t}}\partial_{u_{1}(J_{n-1})}\mathfrak{S}_{w_{0}^{n-1}}&\\
+ &\sum_{ J_{n-1} \subseteq \{l\in [t+1,n-2]\mid i_{l}\leq l\}}
X[1,t-1]s_{t}(x_{t}[\delta_{J_{n-1},t+1}^{u_1}x_{t+1}\cdots  \delta_{J_{n-1},n-1}^{u_1}x_{n-1}]_{R})\cdot
 \partial_{t}\partial_{t-1,i_{t}}\partial_{u_{1}(J_{n-1})}\mathfrak{S}_{w_{0}^{n-1}}&\\
 =&\sum_{ J_{n-1} \subseteq \{l\in [t,n-2]\mid i_{l}\leq l\}}
X[1,t-1][\delta_{J_{n-1},t}^{u}x_{t}\delta_{J_{n-1},t+1}^{u}x_{t+1}\cdots  \delta_{J_{n-1},n-1}^{u}x_{n-1}]_{R}\cdot
 \partial_{u(J_{n-1})}\mathfrak{S}_{w_{0}^{n-1}},&
\end{align*}
 where the summation is over all the $J_{n-1}$ such that
 $u(J_{n-1})$ is defined.

Let  $t=1$. Then we have
$$
\partial_{u}\mathfrak{S}_{w_{0}^{n}}
=\sum_{J_{n-1} \subseteq \{l\in [1,n-2]\mid i_{l}\leq l\}}X^{u}_{J_{n-1}}\partial_{u(J_{n-1})}(\mathfrak{S}_{w_{0}^{n-1}}).
$$
By induction hypothesis, we have
$$
\partial_{u}\mathfrak{S}_{w_{0}^{n}}=\sum_{\substack{J_{n-1},\ldots, J_{2},J_{1}  }}X^{u}_{J_{n-1}}X^{u(J_{n-1})}_{J_{n-2}}\cdots X^{u(J_{n-1},J_{n-2},\ldots, J_{2})}_{J_{1}},
$$
where the  summation  is over all the $J_{n-1},\ldots, J_{1}$ such that $ {u(J_{n-1},\ldots, J_{l})}$ is defined for
any $1\leq l\leq n-1$.
\end{proof}

\begin{example}\label{example for formula P}
$\partial_{u}\mathfrak{S}_{w_{0}^{6}}=X[1,5]\cdot \sum\limits_{1\leq i<j\leq 4} x_{i}x_{j}$, where $u=s_{1,1}s_{2,1} s_{3,1} s_{4,3}$.
\end{example}
\begin{proof}
Since we are given $\partial_{u}\mathfrak{S}_{w_{0}^{6}}$, we should begin with $u\in S_{6}$. For $u=s_{1,1}s_{2,1} s_{3,1} s_{4,3} \in S_{6}$, $J_{5}\subseteq \{1,2,3,4\}$. It is easy to see that if $J_{5}\neq \{1,2,3,4\}$, then $X_{J_{5}}^{u}=0$, so $u(J_{5})$ is undefined.  Let
$J_{5}=\{1,2,3,4\}$. Then   $X_{J_{5}}^{u}=X[1,5]$,
$u(J_{5})=u=s_{1,1}s_{2,1} s_{3,1} s_{4,3}\in S_{5}$, $J_{4}\subseteq \{1,2,3\}$. It is straightforward to see that only when $J_{4}=\emptyset$ or $J_{4}=\{3\}$ or
$J_{4}=\{2,3\}$, we have $u(J_{5},J_{4})$ is defined. For example, if $J_{4}=\{2\}$, then $X_{J_{4}}^{u(J_{5})}=x_{3}$, $s_{0,1}s_{2,1} s_{2,1} s_{3,3}$ is not reduced. If $J_{4}=\emptyset$, then $X_{J_{4}}^{u(J_{5})}=1$, $u(J_{5},J_{4})=s_{1,1}s_{2,1}s_{3,3}\in S_{4}$, $J_{3}\subseteq \{1,2\}$. In this way, we can list all the possible $J_{5},\ldots, J_{1}$ such that $u(J_{5},\ldots, J_{1})$ is defined (Table \ref{table 1}).
The result follows immediately from  Theorem \ref{formula P}.
\begin{table}[H]

\begin{tabular}
{|c|c|c|c|c|}

\hline

\multirow{24}{2.15cm}{\small{$J_{5}=\{1,2,3,4\}$,    $X_{J_{5}}^{u}=X[1,5]$,
$u(J_{5}) =s_{1,1}s_{2,1} s_{3,1} s_{4,3}$ $\in S_{5}$ }   } & \multirow{12}{2.5cm}{\small{$J_{4}=\emptyset$, $X_{J_{4}}^{u(J_{5})}=1$, $u(J_{5},J_{4})=s_{1,1}s_{2,1}s_{3,3}$ $\in S_{4}$}}   &  \multirow{4}{3cm}{$J_{3}=\{1,2\}$, $X_{J_{3}}^{u(J_{5},J_{4})}$ $=x_{2}x_{3}$, $u(J_{5},J_{4},J_{3})=s_{1,1}s_{2,1}$ $\in S_{3}$
}    & \multirow{4}{3cm}{
$J_{2}=\emptyset$, $X_{J_{2}}^{u(J_{5},J_{4},J_{3})}=1$, $u(J_{5},\ldots, J_{2})=s_{1,1} $ $\in S_{2}$}   &  \multirow{4}{3cm}{$J_{1}=\emptyset, X_{J_{1}}^{u(J_{5},\ldots, J_{2})}=1, u(J_{5},\ldots, J_{1})=1 $ $\in S_{1}$}     \\

 & & & &  \\

 & & & &  \\

 & & & &  \\

  \cline{3-5}
 &    &  \multirow{4}{3cm}{$J_{3}=\{ 2\}$, $X_{J_{3}}^{u(J_{5},J_{4})}=x_3$, $u(J_{5},J_{4},J_{3})= s_{2,1}$ $\in S_{3}$
}    & \multirow{4}{3cm}{$J_{2}=\emptyset$, $X_{J_{2}}^{u(J_{5},J_{4},J_{3})}=x_{1}$, $u(J_{5},\ldots, J_{2})=s_{1,1}$ $\in S_{2}$}   &  \multirow{4}{3cm}{$J_{1}=\emptyset$, $X_{J_{1}}^{u(J_{5},\cdots ,J_{2})}=1$, $u(J_{5},\ldots, J_{1})=1$ $\in S_{1}$}     \\

 & & & &  \\

 & & & &  \\

 & & & &  \\

  \cline{3-5}
 &    &  \multirow{4}{3cm}{
$J_{3}=\emptyset$, $X_{J_{3}}^{u(J_{5},J_{4})}=1$, $u(J_{5},J_{4},J_{3})= s_{1,1}  $ $\in S_{3}$
}    & \multirow{4}{3.2cm}{\small{
$J_{2}=\{1\}$, $X_{J_{2}}^{u(J_{5},J_{4},J_{3})}$$=x_{1}x_{2}$, $u(J_{5},\ldots, J_{2})=s_{1,1}$ $\in S_{2}$
}}   &  \multirow{4}{3cm}{$J_{1}=\emptyset$, $X_{J_{1}}^{u(J_{5},\ldots, J_{2})}=1$, $u(J_{5},\ldots, J_{1})=1 \in S_{1}$}     \\

 & & & &  \\

 & & & &  \\

 & & & &  \\

  \cline{2-5}
 & \multirow{8}{2.5cm}{\small{$J_{4}=\{3\}$, $X_{J_{4}}^{u(J_{5})}=x_{4}$, $u(J_{5},J_{4})=s_{1,1}s_{2,2}s_{3,1}$ $\in S_{4}$}}   &  \multirow{4}{3cm}{$J_{3}=\{1\}$, $X_{J_{3}}^{u(J_{5},J_{4})}=x_2$, $u(J_{5},J_{4},J_{3})=s_{1,1} s_{2,1}$   $\in S_{3}$
}    & \multirow{4}{3cm}{$J_{2}=\emptyset$, $X_{J_{2}}^{u(J_{5},J_{4},J_{3})}=1$, $u(J_{5},\ldots, J_{2})=s_{1,1}$ $\in S_{2}$
}   &  \multirow{4}{3cm}{
$J_{1}=\emptyset$, $X_{J_{1}}^{u(J_{5},\ldots, J_{2})}=1$, $u(J_{5},\ldots, J_{1})=1$ $\in S_{1}$}   \\

 & & & &  \\

 & & & &  \\

 & & & &  \\

 \cline{3-5}
  &    &  \multirow{4}{3cm}{
$J_{3}=\emptyset$, $X_{J_{3}}^{u(J_{5},J_{4})}=1$, $u(J_{5},J_{4},J_{3})=  s_{2,1}$ $\in S_{3}$
}    & \multirow{4}{3cm}{
$J_{2}=\emptyset$, $X_{J_{2}}^{u(J_{5},J_{4},J_{3})}=x_{1}$, $u(J_{5},J_{4},J_{3},J_{2})=s_{1,1}$ $\in S_{2}$
}   &  \multirow{4}{3cm}{$J_{1}=\emptyset$, $X_{J_{1}}^{u(J_{5},\ldots, J_{2})}=1$, $u(J_{5},\cdots ,J_{1})=1$ $\in S_{1}$}   \\

 & & & &  \\

 & & & &  \\

 & & & &  \\

 \cline{2-5}

   & \multirow{4}{2.5cm}{\small{$J_{4}=\{2, 3\}$, $X_{J_{4}}^{u(J_{5})}=x_{3}x_{4}$, $ u(J_{5},J_{4})=s_{1,1}s_{2,1}s_{3,1}$ $\in S_{4}$}}   &  \multirow{4}{3cm}{$J_{3}=\emptyset$, $X_{J_{3}}^{u(J_{5},J_{4})}=1$, $u(J_{5},J_{4},J_{3})= s_{1,1} s_{2,1}$ $\in S_{3}$
}    & \multirow{4}{3cm}{
$J_{2}=\emptyset$, $X_{J_{2}}^{u(J_{5},J_{4},J_{3})}=1$, $u(J_{5},\ldots, J_{2})=s_{1,1}$ $\in S_{2}$}   &  \multirow{4}{3cm}{$J_{1}=\emptyset$, $X_{J_{1}}^{u(J_{5},\ldots, J_{2})}=1$, $u(J_{5},\ldots, J_{1})=1$ $\in S_{1}$}  \\

 & & & &  \\

 & & & &  \\

 & & & &  \\

 & & & &  \\

\hline

\end{tabular}
\caption{Example \ref{example for formula P}}\label{table 1}
\end{table}
\end{proof}

\ \

Now we begin to construct another formula for Schubert polynomials.

Given $n\geq 2$, for any $u=s_{1,i_{1}}s_{2,i_{2}}\cdots s_{n-1,i_{n-1}}\in S_{n}$,  for any $1\leq r\leq n-1$, define
$$q_{r,0}^{u}=r,\ M_{r,1}^{u}=\{j\in \mathbb{N}\mid 1\leq j<q_{r,0}^{u}, i_{r}-1\in [i_{j}, j]\}.$$
We proceed to define $q_{r,l-1}^{u},\ M_{r,l}^{u}$ by induction on $l$. Suppose that $q_{r,l-1}^{u},\ M_{r,l}^{u}$ have been defined.
If $M_{r,l}^{u}\neq\emptyset$, then define
 $$q_{r,l}^{u}=\operatorname{max}M_{r,l}^{u},\ M_{r,l+1}^{u}=\{j\in \mathbb{N}\mid 1\leq j<q_{r,l}^{u}, i_{r}-(l+1)\in [i_{j}, j]\}.$$
If $M_{r,l}^{u}=\emptyset$, then define
$$m_{r}^{u}=l-1.$$
 Finally, define
 $$Q_{r}^{u}=\{q_{r,j}^{u}\mid 1\leq j\leq m_{r}^{u}\}.$$
In other word, $q_{r,j}^{u}$ $(j\in [1,m_{r}^{u}])$ is the largest integer $q$ such that
$i_{r}-j\in [i_{q},q]$ and $q<q_{r,j-1}^{u}$.
 By definition, we have $Q_{r}^{u}\subseteq [1,r]$, $q_{r,1}^{u}>q_{r,2}^{u}>\cdots$. In particular,
 if $m_{r}^{u}=0$, then $Q_{r}^{u}=\emptyset$. For example,
if $u=s_{3,2}s_{5,1}s_{6,4}s_{7,1}s_{8,8}s_{9,5}$, then
 $q_{9,1}^{u}=7$,  $q_{9,2}^{u}=5$, $q_{9,3}^{u}=3$, $m_{9}^{u}=3$.

\begin{lemma}\label{property Q}
For any $u=s_{1,i_{1}}s_{2,i_{2}}\cdots s_{n-1,i_{n-1}}, \ v=s_{1,j_{1}}s_{2,j_{2}}\cdots s_{n-1,j_{n-1}}\in S_{n}$, we have
\begin{enumerate}
\item[\emph{(1)}]  If $i_{k}=j_{k}$ for any $k\leq r$, then $Q_{t}^{u}=Q_{t}^{v}$, $q_{t,j}^{u}=q_{t,j}^{v}$  for any $1\leq t\leq r$, $1\leq j\leq m_{t}^{u}.$
\item[\emph{(2)}] If $i_{k}=j_{k}$ for any $k\geq r$, then $Q_{t}^{u}\cap [r,n-1]=Q_{t}^{v}\cap [r,n-1]$  for any $r\leq t\leq n-1$. Moreover,
    $q_{t,j}^{u}=q_{t,j}^{v}$ if  $q_{t,j}^{u}\geq r$.
\item[\emph{(3)}]  If  $i_{k}=j_{k}$ for any $k\neq t-1, t$ and $\{t-1,t\}\cap Q_{r}^{u}=\emptyset$, $\{t-1,t\}\cap Q_{r}^{v}=\emptyset $ for some $1\leq r\leq n-1$, then
   $Q_{r}^{u}=Q_{r}^{v}$, $q_{r,j}^{u}=q_{r,j}^{v}$  for any  $1\leq j\leq m_{r}^{u}.$
\item[\emph{(4)}]  For any $j\geq 1$, if  $i_{j}<j+1$, then   $q_{j,m_{j}^{u}}^{u}>i_{j}-m_{j}^{u}-1$; If
         $i_{j}=j+1$, then $q_{j,m_{j}^{u}}^{u}=i_{j}-m_{j}^{u}-1=j$.
\end{enumerate}
\end{lemma}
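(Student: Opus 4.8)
The plan is to treat all four parts as stability statements for the greedy construction $r\mapsto(q^u_{r,0},q^u_{r,1},\dots)$, exploiting two elementary facts that I would record first. First, the predicate $i_r-l\in[i_k,k]$ unfolds to $i_k\le i_r-l\le k$, so \emph{any} index $k$ meeting it satisfies $k\ge i_r-l$; second, the selected indices strictly decrease, $q^u_{r,0}=r>q^u_{r,1}>q^u_{r,2}>\cdots$. Both are immediate from the definitions and are the only engine needed.

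For (1) and (2) I would induct on the step number $l$. For (1) the point is \emph{locality}: to compute $q^u_{t,l}$ with $t\le r$ one only ever reads the base $i_t$ and the coordinates $i_k$ with $k<q^u_{t,l-1}\le t\le r$, i.e. only coordinates indexed $\le r$, on which $u$ and $v$ agree; so $M^u_{t,l}=M^v_{t,l}$ and $q^u_{t,l}=q^v_{t,l}$ for every $l$, and the terminating index $m_t$ is the same. For (2) I would carry the invariant ``$q^u_{t,l}\ge r\implies q^u_{t,l}=q^v_{t,l}$''. Since $t\ge r$ gives $i_t=j_t$, the target $i_t-l$ is common; as long as the (strictly decreasing) sequence stays $\ge r$ the relevant candidates $k\ge r$ have $i_k=j_k$, so $M^u_{t,l}$ and $M^v_{t,l}$ coincide on $[r,n-1]$, and since the maximum lives there it is the same on both sides; once the sequence drops below $r$ the remaining terms are discarded by intersecting with $[r,n-1]$. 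This yields $Q^u_t\cap[r,n-1]=Q^v_t\cap[r,n-1]$ together with the ``moreover'' clause.

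Part (3) is the crux and the main obstacle. Again I would induct on $l$, showing $q^u_{r,l}=q^v_{r,l}$. The key device is that the candidate sets $M^u_{r,l}$ and $M^v_{r,l}$, formed with the common cutoff $q^u_{r,l-1}=q^v_{r,l-1}$ and the \emph{common} target $i_r-l$, can differ only at $k\in\{t-1,t\}$, whereas by hypothesis both selected maxima $q^u_{r,l},q^v_{r,l}$ lie outside $\{t-1,t\}$. Hence each selected maximum, lying off $\{t-1,t\}$, belongs to both $M^u_{r,l}$ and $M^v_{r,l}$, so each is $\le$ the other's maximum, forcing equality; and emptiness of one candidate set forces emptiness of the other, since otherwise its maximum would fall in $\{t-1,t\}$, contradicting the hypothesis, so $m^u_r=m^v_r$. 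The one thing this argument genuinely needs is that the target base $i_r-l$ be unperturbed, i.e. $i_r=j_r$; this is automatic when $r\notin\{t-1,t\}$, and the boundary indices $r\in\{t-1,t\}$ must be pinned down using the hypotheses together with the specific relation between $u$ and $v$, which is where I expect the real care to be required.

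Finally (4) is a direct computation from the first recorded fact. Writing $m=m^u_r$, every element $k$ of $M^u_{r,m}$ satisfies $k\ge i_r-m$, so if $m\ge 1$ then $q^u_{r,m}=\max M^u_{r,m}\ge i_r-m>i_r-m-1$, while if $m=0$ then $q^u_{r,0}=r\ge i_r>i_r-1$ precisely because $i_r\le r$; this gives the first assertion. For the second, when $i_r=r+1$ the first target is $i_r-1=r$, which exceeds every admissible $k<r$, so $M^u_{r,1}=\emptyset$, whence $m=0$ and $q^u_{r,0}=r=i_r-m-1$.
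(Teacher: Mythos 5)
Your treatment of (1), (2), and (4) is correct, and for (4) your computation is precisely the paper's proof: the paper's entire argument for this lemma consists of the (4) computation ($m_j^u=0$ gives $q_{j,0}^{u}=j>j-1\geq i_{j}-1$; $m_j^u\geq 1$ gives $i_{j}-m_{j}^{u}\in[i_{q_{j,m_{j}^{u}}^{u}},q_{j,m_{j}^{u}}^{u}]$, hence $q_{j,m_{j}^{u}}^{u}\geq i_{j}-m_{j}^{u}>i_{j}-m_{j}^{u}-1$), with (1)--(3) dismissed in one line as following ``immediately from the definition of $q_{t,j}^{u}$.'' Your inductive arguments supply exactly the details the paper omits: for (1), the locality observation that computing $q^{u}_{t,l}$ for $t\leq r$ only reads $i_{t}$ and coordinates $i_{k}$ with $k<q^{u}_{t,l-1}\leq t\leq r$; for (2), the invariant that the strictly decreasing sequence agrees with $v$'s as long as it stays in $[r,n-1]$, using $M^{u}_{t,l}\cap[r,\infty)=M^{v}_{t,l}\cap[r,\infty)$; and for (3), the mechanism that $M^{u}_{r,l}$ and $M^{v}_{r,l}$ differ only inside $\{t-1,t\}$ while both maxima are assumed to avoid $\{t-1,t\}$, forcing equality of the maxima and simultaneous emptiness. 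All of this is sound.

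The boundary case you flagged in (3) --- $r\in\{t-1,t\}$, where $i_{r}$ may differ from $j_{r}$ so the targets $i_{r}-l$ and $j_{r}-l$ disagree --- is not a case you failed to handle: as literally stated, (3) is \emph{false} there, so no amount of care closes it. A counterexample in $S_{5}$ with $t=3$, $r=t$: take $u=s_{1,1}s_{2,3}s_{3,2}s_{4,5}=s_{1}s_{3}s_{2}$ and $v=s_{1,1}s_{2,3}s_{3,4}s_{4,5}=s_{1}$, so $i_{k}=j_{k}$ for all $k\neq 2,3$. For $u$ the first target is $i_{3}-1=1\in[i_{1},1]=[1,1]$ while $[i_{2},2]=[3,2]=\emptyset$, so $Q_{3}^{u}=\{1\}$; for $v$ the first target is $j_{3}-1=3$, which lies in neither $[1,1]$ nor $\emptyset$, so $Q_{3}^{v}=\emptyset$. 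Both hypotheses $\{2,3\}\cap Q_{3}^{u}=\emptyset$ and $\{2,3\}\cap Q_{3}^{v}=\emptyset$ hold, yet $Q_{3}^{u}\neq Q_{3}^{v}$. Thus (3) must be read with the implicit restriction $r\notin\{t-1,t\}$ (equivalently $i_{r}=j_{r}$), which is exactly the hypothesis your argument identified as necessary; this is harmless for the paper, since every invocation of (3) (in the proof of Lemma \ref{leading term step by step}) takes $r=j\geq t+1$. With that restriction made explicit, your proof is complete, and strictly more rigorous than the paper's own.
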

\begin{proof}
Since (1)-(3) follow immediately from the definition of $q_{t,j}^{u}$,
we just need to prove  (4). If $i_{j}=j+1$, the claim
is easy. So we may assume that
$i_{j}\leq j$.
 If $m_{j}^{u}=0$, then
$q_{j,m_{j}^{u}}^{u}=q_{j,0}^{u}=j>j-1\geq i_{j}-1=i_{j}-m_{j}^{u}-1$.
If $m_{j}^{u}\neq 0$, then $i_{j}-m_{j}^{u}\in [i_{q_{j,m_{j}^{u}}^{u}},q_{j,m_{j}^{u}}^{u}]$.
So $i_{j}-m_{j}^{u}-1<i_{j}-m_{j}^{u}\leq q_{j,m_{j}^{u}}^{u}$.
\end{proof}
Given $n\geq 2$,
  $u=s_{1,i_{1}}s_{2,i_{2}}\cdots s_{n-1,i_{n-1}}\in S_{n}$,
 for any
 $$\overrightarrow{T_{n-1}}\in\{(t_{m_{n-1}^{u}},\ldots, t_{2},  t_{1}, 0 ) \mid   i_{n-1}-j\leq t_{j}\leq q_{n-1,j}^{u}+1, 1\leq j\leq m_{n-1}^{u}, t_{j}\in \mathbb{N}\},$$
define
$$
 u(\overrightarrow{T_{n-1}})=\left\{
\begin{aligned}
&\left[\frac{s_{1,i_{1}}s_{2,i_{2}}\cdots s_{n-2,i_{n-2}}}{(s_{t_{m_{n-1}^{u}}},\ldots, s_{t_{2}},s_{t_{1}} )}\right] &   &  \mbox{ if }  \frac{s_{1,i_{1}}s_{2,i_{2}}\cdots s_{n-2,i_{n-2}}}{(s_{t_{m_{n-1}^{u}}},\ldots, s_{t_{2}},s_{t_{1}} )} \mbox{ is reduced}, \\
&\mbox{undefined} &   &  \mbox{otherwise},
\end{aligned}
\right.
$$
where $\left[\frac{s_{1,i_{1}}s_{2,i_{2}}\cdots s_{n-2,i_{n-2}}}{(s_{t_{m_{n-1}^{u}}},\ldots, s_{t_{2}},s_{t_{1}} )}\right]$ is the normal form of the word getting by substituting every $s_{q_{r,j}^{u},i_{q_{r,j}^{u}}}$ by $\frac{s_{q_{r,j}^{u},i_{q_{r,j}^{u}}}}{s_{t_{j}}}$, $1\leq j\leq m_{n-1}^{u}$.
For each $\overrightarrow{T_{n-1}}$ such that $u(\overrightarrow{T_{n-1}})$ is defined, define
\begin{align*}
X_{\overrightarrow{T_{n-1}}}^{u}
:=&X[1,i_{n-1}-m_{n-1}^{u}-1]\prod_{1\leq j\leq m_{n-1}^{u}}x^{P(t_{j}>q_{n-1,j}^{u})}_{1+q_{_{n-1,j}}^{u}}&\\
=&X[1,i_{n-1}-m_{n-1}^{u}-1]x^{P(t_{m_{n-1}^{u}}>q_{n-1,m_{n-1}^{u}})}_{_{_{1+q_{_{n-1,m_{n-1}^{u}}}^{u}}}}\cdots   x^{P(t_{2}>q_{n-1,2}^{u})}_{_{1+q_{_{n-1,2}}^{u}}}x^{P(t_{1}>q_{n-1,1}^{u})}_{_{1+q_{_{n-1,1}}^{u}}},&
\end{align*}
where $\prod\limits_{1\leq j\leq m_{n-1}^{u}}x^{P(t_{j}>q_{n-1,j}^{u})}_{1+q_{_{n-1,j}}^{u}}=1$ if $m_{n-1}^{u}=0$.
We proceed to define
 $u(\overrightarrow{T_{n-1}}, \cdots, \overrightarrow{T_{n-l-1}})$ by induction on $l$.
Suppose that   $u(\overrightarrow{T_{n-1}}, \cdots, \overrightarrow{T_{n-l}})$  has been  defined. If
 $u(\overrightarrow{T_{n-1}}, \cdots, \overrightarrow{T_{n-l}})$ is undefined, then  $\overrightarrow{T_{n-l-1}}$
 and
 $u(\overrightarrow{T_{n-1}}, \cdots, \overrightarrow{T_{n-l-1}})$ are undefined. Otherwise,
$
 u(\overrightarrow{T_{n-1}}, \cdots, \overrightarrow{T_{n-l}})\in S_{n-l}.
 $ Say $u(\overrightarrow{T_{n-1}}, \cdots, \overrightarrow{T_{n-l}})=v=s_{1,j_1}\cdots s_{n-l-1,j_{n-l-1}}$.
For any vector
  $$\overrightarrow{T_{n-l-1}} \in \{(t_{m_{n-l-1}^{v}},\ldots, t_{2},  t_{1},0 )
  \mid   j_{n-l-1}-j\leq t_{j}\leq q_{n-l-1,j}^{v}+1, 1\leq j\leq m_{n-l-1}^{v},t_{j}\in \mathbb{N}\},$$
  we   define
  $u(\overrightarrow{T_{n-1}}, \cdots, \overrightarrow{T_{n-l-1}})$ to be   $ u(\overrightarrow{T_{n-1}}, \cdots, \overrightarrow{T_{n-l}})(\overrightarrow{T_{n-l-1}})=v(\overrightarrow{T_{n-l-1}}),$
  $X_{\overrightarrow{T_{n-l-1}}}^{u(\overrightarrow{T_{n-1}}, \cdots, \overrightarrow{T_{n-l}})}=X_{\overrightarrow{T_{n-l-1}}}^{v}$.
Note that the set of $\overrightarrow{T_{n-l-1}}$'s  depends on $u(\overrightarrow{T_{n-1}}, \cdots, \overrightarrow{T_{n-l}})$.
However, for simplicity, we just use the notation $\overrightarrow{T_{n-l-1}}$.

In particular, if $Q_{n-1}^{u}=\emptyset  $, then
$
X_{\overrightarrow{T_{n-1}}}^{u}=
X[1,i_{n-1}-m_{n-1}^{u}-1].
$
If $t_{j}= q_{n-1,j}^{u}+1$  for any $1\leq j\leq m_{n-1}^{u}$, then
$
u(\overrightarrow{T_{n-1}})=s_{1,i_{1}}s_{2,i_{2}}\cdots s_{n-2,i_{n-2}}\in S_{n-1}
$ and
$
X_{\overrightarrow{T_{n-1}}}^{u}=X[1,i_{n-1}-m_{n-1}^{u}-1]x_{_{1+q_{_{n-1,m_{n-1}^{u}}}^{u}}}\cdots   x_{_{1+q_{_{n-1,2}}^{u}}}x_{_{1+q_{_{n-1,1}}^{u}}}
$.

For example, we first fix $n=11$. Let $u=s_{3,2}s_{5,1}s_{6,4}s_{7,1}s_{8,8}s_{9,5}\in S_{11}$. Then
$Q_{10}^{u}=\emptyset$, $\overrightarrow{T_{10}}=(0)$, $u(\overrightarrow{T_{10}})=u\in S_{10}$,
$X^{u}_{\overrightarrow{T_{10}}}=X[1,10]$,  $Q_{9}^{u(\overrightarrow{T_{10}})}=\{7,5,3\}$,
 $\overrightarrow{T_{9}}\in \{(t_{3},t_{2},t_{1},0)\in \mathbb{N}^{4}\mid 2\leq t_{3}\leq 4, 3\leq t_{2}\leq 6, 4 \leq t_{1}\leq 8  \}$.
  If $\overrightarrow{T_{9}}=(2,3,4,0)$, then
$u(\overrightarrow{T_{10}})(\overrightarrow{T_{9}})
=s_{3,3}\cdot s_{5,4}s_{2,1}\cdot s_{6,4}\cdot s_{7,5}s_{3,1}\cdot s_{8,8}
=s_{3,1}s_{5,4} s_{6,1}s_{7,5}s_{8,8}
 $,
  $Q_{8}^{u(\overrightarrow{T_{10}})(\overrightarrow{T_{9}})}=\{7,6,5\}$,
$\overrightarrow{T_{8}}\in \{(t_{3},t_{2},t_{1},0)\in \mathbb{N}^{4}\mid 5\leq t_{3}\leq 6, 6\leq t_{2}\leq 7, 7\leq t_{1}\leq 8\}$.

\begin{theorem}\label{formula Q}
For any $u\in S_{n}$  $(n\geq 2)$, we have
$$\partial_{u}\mathfrak{S}_{w_{0}^{n}}=\sum_{\overrightarrow{T_{n-1}}, \cdots, \overrightarrow{T_{1}}}X_{\overrightarrow{T_{n-1}}}^{u}X_{\overrightarrow{T_{n-2}}}^{u(\overrightarrow{T_{n-1}})}\cdots X_{\overrightarrow{T_{1}}}^{u(\overrightarrow{T_{n-1}},\ldots, \overrightarrow{T_{2}})},$$
where the  summation  is over all the  $\overrightarrow{T_{n-1}}, \cdots, \overrightarrow{T_{1}}$ such that $u(\overrightarrow{T_{n-1}},\ldots, \overrightarrow{T_{l}})$ is defined for
any $1\leq l\leq n-1$.
\end{theorem}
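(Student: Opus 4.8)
The plan is to argue by induction on $n$, reducing each $S_n$-computation to an $S_{n-1}$-computation, exactly as in the proof of Theorem~\ref{formula P}. The base case $n=2$ is immediate: for $u=s_{1,i_{1}}$ one has $m_{1}^{u}=0$, the only vector is $\overrightarrow{T_{1}}=(0)$, and $\partial_{u}\mathfrak{S}_{w_{0}^{2}}=\partial_{u}(x_{1})=X[1,i_{1}-1]=X_{\overrightarrow{T_{1}}}^{u}$. For the inductive step it suffices to prove the one-step reduction
$$\partial_{u}\mathfrak{S}_{w_{0}^{n}}=\sum_{\overrightarrow{T_{n-1}}}X_{\overrightarrow{T_{n-1}}}^{u}\,\partial_{u(\overrightarrow{T_{n-1}})}\bigl(\mathfrak{S}_{w_{0}^{n-1}}\bigr),$$
the sum running over all $\overrightarrow{T_{n-1}}$ for which $u(\overrightarrow{T_{n-1}})$ is defined. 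Because every $u(\overrightarrow{T_{n-1}})$ lies in $S_{n-1}$, applying the induction hypothesis to each summand and concatenating the index vectors produces precisely the claimed sum over $\overrightarrow{T_{n-1}},\ldots,\overrightarrow{T_{1}}$. Here we use $\mathfrak{S}_{w_{0}^{n}}=X[1,n-1]\,\mathfrak{S}_{w_{0}^{n-1}}$ and the fact that $\mathfrak{S}_{w_{0}^{n-1}}$ involves only $x_{1},\ldots,x_{n-2}$, so that the whole content of the reduction is to isolate the effect of the last chain $s_{n-1,i_{n-1}}$.

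To establish the one-step reduction I factor $\partial_{u}=\partial_{u'}\,\partial_{n-1,i_{n-1}}$ with $u'=s_{1,i_{1}}\cdots s_{n-2,i_{n-2}}$ and follow how the descending chain $\partial_{n-1,i_{n-1}}=\partial_{n-1}\partial_{n-2}\cdots\partial_{i_{n-1}}$ passes through $X[1,n-1]$ and through the earlier factors of $\partial_{u'}$. The engine is Lemma~\ref{move Q}, applied repeatedly together with the Leibniz rule for $\partial_{t}$ and relation~(iii) $\partial_{i,j}\partial_{i}=\partial_{i-1}\partial_{i,j}$ of $\mathcal{NC}_{n}$: whenever a segment of the chain meets a prefix $X[1,k]$ it either commutes past it (Lemma~\ref{move Q}(1)) or splits as in Lemma~\ref{move Q}(2), replacing an inner $\partial_{t}$ by $\frac{\partial_{i,j}}{\partial_{t}}$ and emitting a monomial factor $x_{k}^{P(t>i)}$. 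Iterating this, the positions at which the chain is forced to interact with $u'$ are exactly the descending sequence $q_{n-1,1}^{u}>q_{n-1,2}^{u}>\cdots$ cut out by the sets $M_{n-1,l}^{u}$, the integer $t_{j}$ in $\overrightarrow{T_{n-1}}$ names the exit index of the $j$-th interaction, the admissible range $i_{n-1}-j\le t_{j}\le q_{n-1,j}^{u}+1$ is the set of exit indices that keep the word reduced, and the substitution $\frac{s_{q,i_{q}}}{s_{t_{j}}}$ at the position $q=q_{n-1,j}^{u}$ records the resulting cancellation inside $u'$, yielding $u(\overrightarrow{T_{n-1}})$. By Lemma~\ref{right normed delta} each accumulated monomial $X_{\overrightarrow{T_{n-1}}}^{u}=X[1,i_{n-1}-m_{n-1}^{u}-1]\prod_{j}x_{1+q_{n-1,j}^{u}}^{P(t_{j}>q_{n-1,j}^{u})}$ is square-free with coefficient in $\{0,1\}$, matching the stated closed form.

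For the bookkeeping to close up I would invoke Lemma~\ref{property Q}. Part~(4) guarantees $i_{n-1}-m_{n-1}^{u}-1\ge 0$, so that $X[1,i_{n-1}-m_{n-1}^{u}-1]$ and the boundary choice $t_{j}=q_{n-1,j}^{u}+1$ are well-defined; parts~(1)--(3) guarantee that replacing $u$ by $u(\overrightarrow{T_{n-1}})$ leaves the combinatorial data $q_{t,j}^{(\cdot)}$ and $Q_{t}^{(\cdot)}$ unchanged at the positions not yet touched, so that $u(\overrightarrow{T_{n-1}})\in S_{n-1}$ really carries the data needed to run the next stage of the induction. One must also check that $u(\overrightarrow{T_{n-1}})$ is undefined exactly when the corresponding summand vanishes, i.e.\ when either $X_{\overrightarrow{T_{n-1}}}^{u}=0$ or the substituted word fails to be reduced; this is what permits restricting the sum to the defined vectors.

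The step I expect to be the main obstacle is precisely this combinatorial bookkeeping: showing that the cumulative effect of threading the entire chain $\partial_{n-1,i_{n-1}}$ through $\partial_{u'}$ is organized by exactly the positions of $Q_{n-1}^{u}$ with exit indices in the stated intervals, and that the emitted $x$-factors assemble into $X_{\overrightarrow{T_{n-1}}}^{u}$. I expect this is cleanest to handle by an inner induction along the chain (on $i_{n-1}$, equivalently on its length) that peels off one interaction at a time while maintaining, as an invariant, the partial product of emitted $x$-factors together with the partially substituted word. Alternatively, since the proof of Theorem~\ref{formula P} already yields the companion one-step identity $\partial_{u}\mathfrak{S}_{w_{0}^{n}}=\sum_{J_{n-1}}X_{J_{n-1}}^{u}\,\partial_{u(J_{n-1})}\mathfrak{S}_{w_{0}^{n-1}}$, one may instead prove the one-step reduction above by exhibiting a contribution-preserving correspondence between the admissible subsets $J_{n-1}$ and the admissible vectors $\overrightarrow{T_{n-1}}$, thereby turning the obstacle into a purely combinatorial matching of two index sets.
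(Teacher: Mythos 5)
Your proposal is correct and follows essentially the same route as the paper: the same outer induction on $n$ reduced to the one-step identity $\partial_{u}\mathfrak{S}_{w_{0}^{n}}=\sum_{\overrightarrow{T_{n-1}}}X_{\overrightarrow{T_{n-1}}}^{u}\partial_{u(\overrightarrow{T_{n-1}})}\mathfrak{S}_{w_{0}^{n-1}}$, established by threading the effect of the last chain through the earlier factors via Lemma~\ref{move Q}(2), with the interactions located at $q_{n-1,1}^{u}>q_{n-1,2}^{u}>\cdots$ and the non-reduced substituted words accounting exactly for the vanishing (undefined) terms. The only divergence is organizational: the paper runs its inner induction on $m_{n-1}^{u}$ rather than on $i_{n-1}$, splitting $u$ at the innermost interaction position $q_{n-1,m_{n-1}^{u}}^{u}$ and applying the hypothesis to the suffix (using Lemma~\ref{property Q}(2) to get $m^{v}=m^{u}-1$ and the emptiness of $M_{n-1,m+1}^{u}$ to commute the remaining prefix operators past the shortened monomial), which is the same computation as your interaction-by-interaction sweep, merely peeled from the other end.
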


\begin{proof}
We first show that for any $u=s_{1,i_1}\cdots s_{n,i_{n}}\in S_{n+1}$, we have
$
\partial_{u}\mathfrak{S}_{w_{0}^{n+1}} =\sum\limits_{\overrightarrow{T_{n}}}X_{\overrightarrow{T_{n}}}^{u}\partial_{u(\overrightarrow{T_{n}})}\mathfrak{S}_{w_{0}^{n}}
$. Induction on $n+1$.

If $n+1=2$, then $u=s_{1,i_{1}}$, $i_{1}=1$ or $2$. For either case, we have $Q_{1}^{u}=\emptyset$, so
$u(\overrightarrow{T_{1}})=1$  and $X_{\overrightarrow{T_{1}}}^{u}=X[1,i_{1}-1]=\partial_{u}(x_{1})=\partial_{u}\mathfrak{S}_{w_{0}^{2}}$.

If $n +1 \geq 3$, then
induction on $m_{n}^{u}$.

If $m_{n}^{u}=0$, then $Q_{n}^{u}=\emptyset$, $\overrightarrow{T_{n}}\in \{(0)\}$,  $u(\overrightarrow{T_{n}})=s_{1,i_{1}}s_{2,i_{2}}\cdots s_{n-1,i_{n-1}}$,
$X_{\overrightarrow{T_{n}}}^{u}=X[1,i_{n}-1]$.
  By applying Lemma \ref{move Q} repeatedly, we have
\begin{align*}
 &\partial_{u}\mathfrak{S}_{w_{0}^{n+1}}&\\
 =&\partial_{s_{1,i_{1}} \cdots s_{n-1,i_{n-1}}}(X[1,i_{n}-1]\mathfrak{S}_{w_{0}^{n}})&\\
 =&\partial_{s_{1,i_{1}} \cdots s_{n-2,i_{n-2}}}(X[1,i_{n}-1] \cdot \partial_{s_{n-1,i_{n-1}}}\mathfrak{S}_{w_{0}^{n}})&\\
  =&\cdots&\\
 =&X[1,i_{n}-1]\partial_{s_{1,i_{1}}s_{2,i_{2}}\cdots s_{n-1,i_{n-1}}}\mathfrak{S}_{w_{0}^{n}}&\\
 =&X_{\overrightarrow{T_{n}}}^{u}\partial_{u(\overrightarrow{T_{n}})}\mathfrak{S}_{w_{0}^{n}}.&
\end{align*}

If $m_{n}^{u}=r\geq 1$, then
$u=s_{1,i_{1}}\cdots s_{q_{_{n,r}}^{u},i_{q_{_{n,r}}^{u}}}s_{q_{_{n,r}}^{u}+1,i_{q_{_{n,r}}^{u}+1}}\cdots s_{n,i_{n}}$.
Let $v=s_{q_{_{n,r}}^{u}+1,i_{q_{_{n,r}}^{u}+1}}\cdots s_{n,i_{n}}$, $w=s_{1,i_{1}}\cdots s_{q_{_{n,r}}^{u}-1,i_{q_{_{n,r}}^{u}-1}}$.
Then by Lemma \ref{property Q}, we have  $m_{n}^{v}=r-1=m_{n}^{u}-1$ and $q_{n,j}^{v}=q_{n,j}^{u}  $ for any $1\leq j\leq r-1$.
Define
$A=\{(t_{m_{n}^{v}},\ldots, t_{1},  0 ) \mid  i_{n}-j\leq t_{j}\leq q_{n,j}^{v}+1, 1\leq j\leq m_{n}^{v}, t_{j}\in \mathbb{N}\}=\{(t_{m_{n}^{u}-1},\ldots, t_{1},0 ) \mid      i_{n}-j\leq t_{j}\leq q_{n,j}^{u}+1, 1\leq j\leq m_{n}^{u}-1, t_{j}\in \mathbb{N}\}$,
$B=\{(t_{m_{n}^{u}},\ldots, t_{1},0 ) \mid     i_{n}-j\leq t_{j}\leq q_{n,j}^{u}+1, 1\leq j\leq m_{n}^{u}, t_{j}\in \mathbb{N}\}$.

By induction hypothesis, we have
\begin{align*}
&\partial_{u}\mathfrak{S}_{w_{0}^{n+1}}&\\
=&\partial_{s_{1,i_{1}}\cdots s_{q_{_{n,r}}^{u}  ,i_{q_{n,r}^{u}  }}}\partial_{v}\mathfrak{S}_{w_{0}^{n+1}}&\\
=&\partial_{ws_{q_{_{n,r}}^{u}  ,i_{q_{n,r}^{u}  }}}
 (\sum_{(t_{m_{n}^{u}-1},\ldots, t_{1},0)\in A}
 (X[1,i_{n}-m_{n}^{v}-1]\prod_{1\leq j\leq m_{n}^{u}-1}x^{P(t_{j}>q_{n,j}^{u})}_{1+q_{n,j}^{u}}
\cdot \partial_{\left[\frac{s_{q_{n,r}^{u}  +1,i_{q_{n,r}^{u}  +1}}\cdots s_{n,i_{n}}}{(s_{t_{m_{n}^{u}  -1}},\ldots, s_{t_{2}},s_{t_{1}} )}\right]}
\mathfrak{S}_{w_{0}^{n}} ) )&\\
=&\sum_{(t_{m_{n}^{u}-1},\ldots, \cdots, t_{1},0)\in A} (
\prod_{1\leq j\leq m_{n}^{u}-1}x^{P(t_{j}>q_{n,j}^{u})}_{1+q_{_{n,j}}^{u}}
\partial_{ws_{q_{_{n,r}}^{u}  ,i_{q_{n,r}^{u}  }}}
(X[1,i_{n}-m_{n}^{u}]
\cdot \partial_{\left[\frac{s_{q_{n,r}^{u}  +1,i_{q_{n,r}^{u}  +1}}\cdots s_{n,i_{n}}}{(s_{t_{m_{n}^{u}  -1}},\ldots, s_{t_{2}},s_{t_{1}} )}\right]}
\mathfrak{S}_{w_{0}^{n}}) )&\\
=&\sum_{(t_{m_{n}^{u}-1},\ldots, \ldots, t_{1},0)\in A}
(\prod_{1\leq j\leq m_{n}^{u}-1}x^{P(t_{j}>q_{n,j}^{u})}_{1+q_{_{n,j}}^{u}}
\partial_{w}
 (\sum_{i_{n}-m_{n}^{u}  \leq t_{m_{n}^{u}  }\leq 1+q_{n,m_{n}^{u}  }^{u}  }(X[1,i_{n}-m_{n}^{u}-1]x^{P(t_{m_{n}^{u}  }>q_{n,m_{n}^{u}  }^{u}  )}_{1+q_{n,m_{n}^{u}  }^{u}}
&\\
&\cdot \frac{\partial_{q_{n,m_{n}^{u}   }^{u},i_{q_{n,m_{n}^{u}  }^{u} }}}{\partial_{t_{m_{n}^{u}  }}}
 \partial_{\left[\frac{s_{q_{n,r}^{u}  +1,i_{q_{n,r}^{u}  +1}}\cdots s_{n,i_{n}}}{(s_{t_{m_{n}^{u}  -1}},\ldots, s_{t_{2}},s_{t_{1}} )}\right]}
\mathfrak{S}_{w_{0}^{n}})))&\\
=&\sum_{(t_{m_{n}^{u}-1},\ldots, \cdots, t_{1},0)\in A}
(\prod_{1\leq j\leq m_{n}^{u}-1}x^{P(t_{j}>q_{n,j}^{u})}_{1+q_{_{n,j}}^{u}}
 (\sum_{i_{n}-m_{n}^{u}  \leq t_{m_{n}^{u}  }\leq 1+q_{n,m_{n}^{u}  }^{u}  } (X[1,i_{n}-m_{n}^{u}-1]x^{P(t_{m_{n}^{u}  }>q_{n,m_{n}^{u}  }^{u}  )}_{1+q_{n,m_{n}^{u}  }^{u}}
&\\
&\cdot \partial_{w}\frac{\partial_{q_{n,m_{n}^{u}  }^{u},i_{q_{n,m_{n}^{u}  }^{u}  }}}{\partial_{t_{m_{n}^{u}  }}}
 \partial_{\left[\frac{s_{q_{n,r}^{u}  +1,i_{q_{n,r}^{u}  +1}}\cdots s_{n,i_{n}}}{(s_{t_{m_{n}^{u}  -1}},\ldots, s_{t_{2}},s_{t_{1}} )}\right]}
\mathfrak{S}_{w_{0}^{n}})))  &\\
=&\sum_{\overrightarrow{T_{n}}=(t_{m_{n}^{u}},\ldots, t_{1},0)\in B}( X[1,i_{n}-m_{n}^{u}-1]
\prod_{1\leq j\leq m_{n}^{u}}x^{P(t_{j}>q_{n,j}^{u})}_{1+q_{_{n,j}}^{u}}
\cdot \partial_{u(\overrightarrow{T_{n}})}
\mathfrak{S}_{w_{0}^{n}})&\\
=&\sum_{\overrightarrow{T_{n}}}X_{\overrightarrow{T_{n}}}^{u}\partial_{u(\overrightarrow{T_{n}})}\mathfrak{S}_{w_{0}^{n}},&
\end{align*}
where     the  summation  is over all the
 $\overrightarrow{T_{n}}\in B$ such that $u(\overrightarrow{T_{n}})$ is defined.
By induction hypothesis, we have
\begin{align*}
&\partial_{u}\mathfrak{S}_{w_{0}^{n+1}}&\\
=&\sum_{\overrightarrow{T_{n}}}(X_{\overrightarrow{T_{n}}}^{u}\sum_{\overrightarrow{T_{n-1}}, \cdots, \overrightarrow{T_{1}}}X_{\overrightarrow{T_{n-1}}}^{u(\overrightarrow{T_{n}})}X_{\overrightarrow{T_{n-2}}}^{u(\overrightarrow{T_{n }})(\overrightarrow{T_{n-1}})}\cdots X_{\overrightarrow{T_{1}}}^{u(\overrightarrow{T_{n }})(\overrightarrow{T_{n-1}},\ldots, \overrightarrow{T_{2}} )})&\\
=&\sum_{\overrightarrow{T_{n }}, \cdots, \overrightarrow{T_{1}}}X_{\overrightarrow{T_{n }}}^{u}X_{\overrightarrow{T_{n-1}}}^{u(\overrightarrow{T_{n }})}\cdots X_{\overrightarrow{T_{1}}}^{u(\overrightarrow{T_{n }},\ldots, \overrightarrow{T_{2}})},&
\end{align*}
where the  summation  is over all the  $\overrightarrow{T_{n-1}}, \cdots, \overrightarrow{T_{1}}$ such that $u(\overrightarrow{T_{n-1}},\ldots, \overrightarrow{T_{l}})$ is defined for
any $1\leq l\leq n-1$.
\end{proof}

\begin{corollary}\emph{(\cite{Mac})}
For any $w\in S_{n}$, the coefficients of monomials in $ \mathfrak{S}_{w}$ are nonnegative integers.
\end{corollary}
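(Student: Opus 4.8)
The plan is to deduce the statement directly from either of the two product formulas already proved, since both formulas exhibit $\partial_u\mathfrak{S}_{w_0^n}$ as a sum whose every summand is visibly a single monomial with coefficient $1$. First I would recall that by the definition of Schubert polynomials we have $\mathfrak{S}_w=\partial_{[w^{-1}w_0^n]}(\mathfrak{S}_{w_0^n})$, so that $\mathfrak{S}_w=\partial_u\mathfrak{S}_{w_0^n}$ for $u=[w^{-1}w_0^n]\in S_n$. Hence it suffices to show that $\partial_u\mathfrak{S}_{w_0^n}$ has nonnegative integer coefficients for every $u\in S_n$, which is precisely the quantity computed in Theorem \ref{formula P}.

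Next I would apply Theorem \ref{formula P}, which writes $\partial_u\mathfrak{S}_{w_0^n}=\sum X^u_{J_{n-1}}X^{u(J_{n-1})}_{J_{n-2}}\cdots X^{u(J_{n-1},\ldots,J_2)}_{J_1}$, the sum being taken only over those index sequences for which each $u(J_{n-1},\ldots,J_l)$ is defined. The key observation is that each factor $X^{\cdots}_{J_{\cdots}}$ is, by definition, a right-normed bracket of the form $[\delta_1x_1\delta_2x_2\cdots\delta_{m}x_{m}]_R$ in which every operator $\delta_q$ lies in $\{id,\partial_q,s_q\}$ (recall $\partial_q^0=id$, so $\partial_q^{P(i_q\le q)}$ is always either $id$ or $\partial_q$). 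Thus Lemma \ref{right normed delta} applies verbatim and shows that each such factor equals $p\,x_1^{q_1}\cdots x_{m}^{q_{m}}$ with the leading coefficient $p$ and all exponents $q_j$ in $\{0,1\}$. In other words, every factor is either $0$ or a monomial with coefficient exactly $1$.

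Finally I would use the fact that the summation in Theorem \ref{formula P} ranges only over sequences for which every $u(J_{n-1},\ldots,J_l)$ is defined, and that by the definition of the operators $u(\cdots)$ this is exactly the requirement that the associated $X$-factors be nonzero. Consequently every surviving summand is a product of coefficient-$1$ monomials, hence itself a monomial with coefficient $1$, and summing these contributions yields a polynomial whose monomial coefficients are sums of $1$'s, that is, nonnegative integers; the same argument goes through verbatim using Theorem \ref{formula Q}, where each $X^{\cdots}_{\overrightarrow{T}}$ is the product of the squarefree monomial $X[1,\cdot]$ with factors $x^{P(\cdots)}$ whose coefficients again lie in $\{0,1\}$. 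I do not expect any genuine obstacle here: the positivity has already been built into the manifestly monomial shape of the two formulas, so the only point deserving care is the bookkeeping that the restriction to \emph{defined} terms is precisely what removes the zero (and hence any potentially sign-bearing) contributions, leaving only coefficient-$1$ monomials to be summed.
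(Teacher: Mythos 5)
Your proposal is correct and follows essentially the same route as the paper, which likewise reduces to $\mathfrak{S}_{w}=\partial_{u}\mathfrak{S}_{w_{0}^{n}}$ for $u=[w^{-1}w_{0}^{n}]$ and then cites Lemma \ref{right normed delta} together with Theorem \ref{formula P} (or, alternatively, Theorem \ref{formula Q}). You have merely spelled out the bookkeeping that the paper leaves implicit, namely that each surviving summand is a monomial with coefficient $1$, so no changes are needed.
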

\begin{proof}
 Let $u=[w^{-1}w_{0}^{n}]$. Then $ \mathfrak{S}_{w}=\partial_{u}\mathfrak{S}_{w_{0}^{n}}$.
 The result follows immediately from Lemma \ref{right normed delta} and Theorem \ref{formula P}. It also
 follows immediately from Theorem \ref{formula Q}.
\end{proof}
\begin{example} \label{example for formula Q}
$\partial_{u}\mathfrak{S}_{w_{0}^{5}}=   \sum\limits_{1\leq i\leq j\leq 3} x_{i}x_{j}$, where $u=s_{1,1}s_{2,1} s_{3,2} s_{4,2}$.
\end{example}
\begin{proof}
For $u=s_{1,1}s_{2,1} s_{3,2} s_{4,2}\in S_{5}$, by definition, we have  $q_{4,1}^{u}=2$, $m_{4}^{u}=1$,
$\overrightarrow{T_{4}}\in \{(1,0), (2,0), (3,0)\}$.

If $\overrightarrow{T_{4}}= (1,0)$,  then
$u(\overrightarrow{T_{4}})=s_{1,1}\cdot \frac{s_{2,1}}{s_{ 1}}\cdot s_{3,2}=s_{1,1}s_{2,2}s_{3,2}\in S_{4}$,
$X_{\overrightarrow{T_{4}}}^{u}=1$.

If $\overrightarrow{T_{4}}= (3,0)$,  then
$u(\overrightarrow{T_{4}})=s_{1,1}\cdot \frac{s_{2,1}}{s_{3}}\cdot s_{3,2}=s_{1,1}s_{2,1}s_{3,2}\in S_{4}$,
$X_{\overrightarrow{T_{4}}}^{u}=x_{3}$.

If $\overrightarrow{T_{4}}= (2,0)$,  then
$ s_{1,1}\cdot \frac{s_{2,1}}{s_{ 2}}\cdot s_{3,2}=s_{1}s_{1}s_{3,2} $ is not reduced,
  so $u(\overrightarrow{T_{4}})$ is undefined.

Let $\overrightarrow{T_{4}}= (1,0)$. Then $q_{3,1}^{u(\overrightarrow{T_{4}})}=1$,
$m_{3}^{u(\overrightarrow{T_{4}})}=1$,
 $\overrightarrow{T_{3}} \in \{(1,0),(2,0)\}$. If
 $\overrightarrow{T_{3}}=(1,0)$, then $u(\overrightarrow{T_{4}},\overrightarrow{T_{3}})=s_{2,2} \in S_{3}$,
 $X_{\overrightarrow{T_{3}}}^{u(\overrightarrow{T_{4}})}=1$. In this way, we can list all the possible
 $\overrightarrow{T_{4}},\ldots, \overrightarrow{T_{1}}$ such that
 $u(\overrightarrow{T_{4}},\ldots, \overrightarrow{T_{1}})$ is defined (Table \ref{table 2}).
 The result follows immediately from Theorem \ref{formula Q}.

\begin{table}[H]

\begin{tabular}
{|c|c|c|c|}

\hline

\multirow{12}{2.5cm}{$\overrightarrow{T_{4}}=(1,0)$,
$u(\overrightarrow{T_{4}})=s_{1,1}s_{2,2}s_{3,2}\in S_{4}$,
$X_{\overrightarrow{T_{4}}}^{u}=1$   }
& \multirow{4}{3.8cm}{ $\overrightarrow{T_{3}}=(1,0)$,
$u(\overrightarrow{T_{4}},\overrightarrow{T_{3}})= s_{2,2} \in S_{3}$,
$X_{\overrightarrow{T_{3}}}^{u(\overrightarrow{T_{4}})}=1$ }
 &  \multirow{4}{3.8cm}{ $\overrightarrow{T_{2}}=( 0)$,
$u(\overrightarrow{T_{4}},\ldots, \overrightarrow{T_{2}})= 1  \in S_{2}$,
$X_{\overrightarrow{T_{2}}}^{u(\overrightarrow{T_{4}},\overrightarrow{T_{3}})}=x_{1}$}
  &  \multirow{4}{3.8cm}{$\overrightarrow{T_{1}}=( 0)$,
$u(\overrightarrow{T_{4}},\ldots, \overrightarrow{T_{1}})= 1 \in S_{1}$,
$X_{\overrightarrow{T_{1}}}^{u(\overrightarrow{T_{4}},\ldots, \overrightarrow{T_{2}})}=x_{1}$ }      \\

 & & &  \\

 & & &  \\

 & & &  \\

   \cline{2-4}

& \multirow{8}{3.8cm}{ $\overrightarrow{T_{3}}=(2,0)$,
$u(\overrightarrow{T_{4}},\overrightarrow{T_{3}})= s_{1,1}s_{2,2} \in S_{3}$,
$X_{\overrightarrow{T_{3}}}^{u(\overrightarrow{T_{4}})}=x_{2}$}
 &  \multirow{4}{3.8cm}{ $\overrightarrow{T_{2}}=( 1,0)$,
$u(\overrightarrow{T_{4}},\ldots, \overrightarrow{T_{2}})= 1  \in S_{2}$,
$X_{\overrightarrow{T_{2}}}^{u(\overrightarrow{T_{4}},\overrightarrow{T_{3}})}=1$}
   &  \multirow{4}{3.8cm}{ $\overrightarrow{T_{1}}=( 0)$,
$u(\overrightarrow{T_{4}},\ldots, \overrightarrow{T_{1}})= 1 \in S_{1}$,
$X_{\overrightarrow{T_{1}}}^{u(\overrightarrow{T_{4}},\ldots, \overrightarrow{T_{2}})}=x_{1}$}      \\

 & & &  \\

 & & &  \\

 & & &  \\

  \cline{3-4}

&
 &  \multirow{4}{3.8cm}{ $\overrightarrow{T_{2}}=( 2,0)$,
$u(\overrightarrow{T_{4}},\ldots, \overrightarrow{T_{2}})= s_{1}  \in S_{2}$,
$X_{\overrightarrow{T_{2}}}^{u(\overrightarrow{T_{4}},\overrightarrow{T_{3}})}=x_{2}$ }
 &  \multirow{4}{3.8cm}{ $\overrightarrow{T_{1}}=( 0)$,
$u(\overrightarrow{T_{4}},\ldots, \overrightarrow{T_{1}})= 1 \in S_{1}$,
$X_{\overrightarrow{T_{1}}}^{u(\overrightarrow{T_{4}},\ldots, \overrightarrow{T_{2}})}=1$}      \\

 & & &  \\

 & & &  \\

 & & &  \\

  \cline{3-4}

\hline

\multirow{12}{2.5cm}{$\overrightarrow{T_{4}}=(3,0)$,
$u(\overrightarrow{T_{4}})=s_{1,1}s_{2,1}s_{3,2}\in S_{4}$,
$X_{\overrightarrow{T_{4}}}^{u}=x_{3}$   }
& \multirow{4}{3.8cm}{ $\overrightarrow{T_{3}}=(3,0)$,
$u(\overrightarrow{T_{4}},\overrightarrow{T_{3}})=s_{1,1} s_{2,1} \in S_{3}$,
$X_{\overrightarrow{T_{3}}}^{u(\overrightarrow{T_{4}})}=x_{3}$ }
 &  \multirow{4}{3.8cm}{ $\overrightarrow{T_{2}}=( 0)$,
$u(\overrightarrow{T_{4}},\ldots, \overrightarrow{T_{2}})= s_{1,1}$ $\in S_{2}$,
$X_{\overrightarrow{T_{2}}}^{u(\overrightarrow{T_{4}},\overrightarrow{T_{3}})}=1$}
  &  \multirow{4}{3.8cm}{$\overrightarrow{T_{1}}=( 0)$,
$u(\overrightarrow{T_{4}},\ldots, \overrightarrow{T_{1}})= 1 \in S_{1}$,
$X_{\overrightarrow{T_{1}}}^{u(\overrightarrow{T_{4}},\ldots, \overrightarrow{T_{2}})}=1$ }      \\

 & & &  \\

 & & &  \\

 & & &  \\

   \cline{2-4}

& \multirow{8}{3.8cm}{ $\overrightarrow{T_{3}}=(1,0)$,
$u(\overrightarrow{T_{4}},\overrightarrow{T_{3}})= s_{1,1}s_{2,2} \in S_{3}$,
$X_{\overrightarrow{T_{3}}}^{u(\overrightarrow{T_{4}})}=1$}
 &  \multirow{4}{3.8cm}{ $\overrightarrow{T_{2}}=( 1,0)$,
$u(\overrightarrow{T_{4}},\ldots, \overrightarrow{T_{2}})= 1  \in S_{2}$,
$X_{\overrightarrow{T_{2}}}^{u(\overrightarrow{T_{4}},\overrightarrow{T_{3}})}=1$}
   &  \multirow{4}{3.8cm}{ $\overrightarrow{T_{1}}=( 0)$,
$u(\overrightarrow{T_{4}},\ldots, \overrightarrow{T_{1}})= 1 \in S_{1}$,
$X_{\overrightarrow{T_{1}}}^{u(\overrightarrow{T_{4}},\ldots, \overrightarrow{T_{2}})}=x_{1}$}      \\

 & & &  \\

 & & &  \\

 & & &  \\

  \cline{3-4}

&
 &  \multirow{4}{3.8cm}{ $\overrightarrow{T_{2}}=( 2,0)$,
$u(\overrightarrow{T_{4}},\ldots, \overrightarrow{T_{2}})= s_{1}  \in S_{2}$,
$X_{\overrightarrow{T_{2}}}^{u(\overrightarrow{T_{4}},\overrightarrow{T_{3}})}=x_{2}$ }
 &  \multirow{4}{3.8cm}{ $\overrightarrow{T_{1}}=( 0)$,
$u(\overrightarrow{T_{4}},\ldots, \overrightarrow{T_{1}})= 1 \in S_{1}$,
$X_{\overrightarrow{T_{1}}}^{u(\overrightarrow{T_{4}},\ldots, \overrightarrow{T_{2}})}=1$}      \\

 & & &  \\

 & & &  \\

 & & &  \\

  \cline{3-4}
\hline

\end{tabular}
\caption{Example \ref{example for formula Q}}\label{table 2}
\end{table}

\end{proof}

\section{Some combinatorial properties of Schubert polynomials}\label{combinatorial property}
In this section, we will use Theorem \ref{formula Q} to develop some combinatorial properties of Schubert polynomials.

For any $u=s_{1,i_{1}}\cdots s_{n-1,i_{n-1}}\in S_{n}$,  $j\in [1,n-1]$, define
$$X_{j}^{u}=X[1,i_{j}-m_{j}^{u}-1]x_{1+q_{j,m_{j}^{u}}^{u}}x_{1+q_{j,m_{j}^{u}-1}^{u}}\cdots x_{1+q_{j,1}^{u}}.$$
In particular, if $Q_{j}^{u}=\emptyset$, then $m_{j}^{u}=0$ and
$X_{j}^{u}=X[1,i_{j}-1]$.
For any commutative word $W=x_{1}^{k_{1}}\cdots x_{n-1}^{k_{n-1}}$  (each $k_{i}\in \mathbb{N}$),
define $\operatorname{deg}_{x_{t}}(W)=k_{t}$. It is clear that for any $j,t,p\in [1,n-1],p<t$, we have $\operatorname{deg}_{x_{t}}(X_{j}^{u})\leq 1$ and $\operatorname{deg}_{x_{t}}(X_{p}^{u})=0$.

\begin{lemma}\label{common suffix}
Let $u=s_{1,i_{1}}\cdots s_{n-1,i_{n-1}}\in S_{n}$, $v=s_{1,j_{1}}\cdots s_{n-1,j_{n-1}}\in S_{n}$, $r\in [1,n-1]$.
If $i_{k}= j_{k}$ for any $k\geq r$, then
\begin{enumerate}
\item[(1)] $\operatorname{deg}_{x_{k}}(X_{t}^{u})=\operatorname{deg}_{x_{k}}(X_{t}^{v})$ for any  $t\in [r+1,n-1]$, $k\in [r+1, t]$.
\item[(2)] If  we have also $i_{r-1}\leq j_{r-1}$, then $\operatorname{deg}_{x_{r}}(X_{t}^{u})\geq \operatorname{deg}_{x_{r}}(X_{t}^{v})$ for any $t\geq r$.
 \end{enumerate}
 \end{lemma}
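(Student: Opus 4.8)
The plan is to reduce both statements to an explicit reading of the monomial $X^u_t$ variable by variable. Writing $A^u_t:=i_t-m^u_t-1$, the definition $X^u_t=X[1,A^u_t]\,x_{1+q^u_{t,m^u_t}}\cdots x_{1+q^u_{t,1}}$ shows that $\operatorname{deg}_{x_k}(X^u_t)=1$ exactly when $1\le k\le A^u_t$ or $k-1\in Q^u_t$, and equals $0$ otherwise; the two alternatives are mutually exclusive, since Lemma \ref{property Q}(4) yields $A^u_t<q^u_{t,m^u_t}<1+q^u_{t,m^u_t}$ (the case $i_t=t+1$ being vacuous, as then $m^u_t=0$). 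Thus $X^u_t$ is squarefree and its $x_k$-degree is governed by a single threshold $A^u_t$ together with the set $Q^u_t$.

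Fix $t\ge r$. Since $i_k=j_k$ for $k\ge r$ we have $i_t=j_t$, and Lemma \ref{property Q}(2) gives $Q^u_t\cap[r,n-1]=Q^v_t\cap[r,n-1]$; write $a$ for its size and set $c:=q^u_{t,a}=q^v_{t,a}$ (with $c:=t$ if $a=0$). Put $w:=i_t-(a+1)$, the common target value of the $(a{+}1)$-st step, and let $b_u:=|Q^u_t\cap[1,r-1]|$, so that $m^u_t=a+b_u$ and hence $A^u_t=w-b_u$; likewise $A^v_t=w-b_v$ with the \emph{same} $w$. The one structural fact I will extract from the definition of $q^u_{t,\cdot}$ is: if $b_u\ge1$ then the first low element $q^u_{t,a+1}\le r-1$ satisfies $w\le q^u_{t,a+1}$, whence $w\le r-1$, and in particular $A^u_t<r$ (and the same for $v$).

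For part (1), take $k\in[r+1,t]$. Then $k-1\ge r$, so the condition $k-1\in Q^u_t$ is equivalent to $k-1\in Q^u_t\cap[r,n-1]=Q^v_t\cap[r,n-1]$, i.e. to $k-1\in Q^v_t$; the $Q$-contribution therefore matches. For the threshold contribution, if $b_u=b_v$ then $A^u_t=A^v_t$ and it matches trivially, while if $b_u\ne b_v$ then $\max(b_u,b_v)\ge1$ forces $w\le r-1$, so $A^u_t,A^v_t\le w\le r-1<k$ and neither threshold fires. Hence $\operatorname{deg}_{x_k}(X^u_t)=\operatorname{deg}_{x_k}(X^v_t)$. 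For part (2) it suffices to prove $\operatorname{deg}_{x_r}(X^v_t)=1\Rightarrow\operatorname{deg}_{x_r}(X^u_t)=1$, as both degrees lie in $\{0,1\}$. One first checks that, because no candidate in $[r,c-1]$ is valid (this follows from the definition of $a$ and from $i_q=j_q$ there), the value $r-1$ can occur in $Q^u_t$ only as the first low element, so $r-1\in Q^u_t\Leftrightarrow i_{r-1}\le w\le r-1$ and likewise $r-1\in Q^v_t\Leftrightarrow j_{r-1}\le w\le r-1$; the hypothesis $i_{r-1}\le j_{r-1}$ then gives $r-1\in Q^v_t\Rightarrow r-1\in Q^u_t$. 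Now if $\operatorname{deg}_{x_r}(X^v_t)=1$ comes from $r-1\in Q^v_t$, then $r-1\in Q^u_t$ and we are done; if it comes from the threshold $r\le A^v_t$, then $b_v=0$ (otherwise $A^v_t<r$), so $w=A^v_t\ge r$, which forces $b_u=0$ and $A^u_t=w\ge r$, again giving $\operatorname{deg}_{x_r}(X^u_t)=1$.

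The delicate point, and the place where I expect the real work to sit, is part (2): the two contributions to $\operatorname{deg}_{x_r}$ are coupled, so one cannot compare them piece by piece but must argue at the level of the total degree, and one must pinpoint exactly where the asymmetric hypothesis $i_{r-1}\le j_{r-1}$ enters, namely in the monotonicity $r-1\in Q^v_t\Rightarrow r-1\in Q^u_t$. The identity $A^u_t=w-b_u$ with the common $w$, together with the implication ``$b_u\ge1\Rightarrow w\le r-1$'', is the device that simultaneously tames the threshold term in both parts.
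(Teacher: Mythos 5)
Your proof is correct and takes essentially the same route as the paper's: both read off $\operatorname{deg}_{x_k}(X_t^u)$ from the prefix threshold $i_t-m_t^u-1$ and the set $Q_t^u$, transfer the elements of $Q_t^u$ lying in $[r,n-1]$ via Lemma \ref{property Q}(2), and let the asymmetric hypothesis $i_{r-1}\le j_{r-1}$ enter exactly through the membership criterion for $r-1\in Q_t^u$ (the paper's inclusion $[j_{r-1},r-1]\subseteq[i_{r-1},r-1]$ is your monotonicity step). Your bookkeeping identity $A_t^u=w-b_u$ together with the fact that $b_u\ge 1$ forces $w\le r-1$ is a slightly tidier device for the threshold cases than the paper's direct verification that $Q_t^u=Q_t^v$ in those cases, but the underlying argument is the same.
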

\begin{proof}
 To prove   $(1)$, we only need to show that $\operatorname{deg}_{x_{k}}(X_{t}^{u})\leq \operatorname{deg}_{x_{k}}(X_{t}^{v})$ for any  $t\in [r+1,n-1]$, $k\in [r+1, t]$.
 Note that $X_{t}^{u}=X[1,i_{t}-m_{t}^{u}-1]x_{1+q_{t,m_{t}^{u}}^{u}}x_{1+q_{t,m_{t}^{u}-1}^{u}}\cdots x_{1+q_{t,1}^{u}}$.
If $\operatorname{deg}_{x_{k}}(X_{t}^{u})=0$, we are done.  If $i_t=t+1$, then $\operatorname{deg}_{x_{k}}(X_{t}^{u})=\operatorname{deg}_{x_{k}}(X_{t}^{v})$.
So we may assume that $\operatorname{deg}_{x_{k}}(X_{t}^{u})=1$, $i_{t}\leq t$.

If $\operatorname{deg}_{x_{k}}(X[i_{t}-m_{t}^{u}-1])=1$, then $i_{t}-m_{t}^{u}-1\geq k\geq r+1$. By definition, we have
 $i_{t}-m_{t}^{u}\in [i_{q_{t,m_{t}^{u}}^{u}},q_{t,m_{t}^{u}}^{u}]$, so $q_{t,m_{t}^{u}}^{u}\geq i_{t}-m_{t}^{u}\geq r+2$, $Q_{t}^{u}\subseteq [r+2,n]\subseteq [r,n]$.
By Lemma \ref{property Q}, we have $q_{t,p}^{u}=q_{t,p}^{v}$ for any $1\leq p\leq m_{t}^{u}$. So $m_{t}^{v}\geq m_{t}^{u}$. Moreover, if $m_{t}^{v}> m_{t}^{u}$,
then $i_{t}-m_{t}^{u}-1\in [i_{q_{t,m_{t}^{u}+1}^{v}},q_{t,m_{t}^{u}+1}^{v}]$, $q_{t,m_{t}^{u}+1}^{v}\geq i_{t}-m_{t}^{u}-1\geq r+1$ but
$q_{t,m_{t}^{u}+1}^{v}\notin Q_{t}^{u}$,
which contradicts with Lemma
\ref{property Q}. Therefore $Q_{t}^{u}=Q_{t}^{v}$, $X_{t}^{u}=X_{t}^{v}$, $\operatorname{deg}_{x_{k}}(X_{t}^{u})\leq \operatorname{deg}_{x_{k}}(X_{t}^{v})$.

If $x_{k}=x_{q_{t,l}^{u}+1}$ for some $l\in [1,m_{t}^{u}]$, then
$q_{t,l}^{u}=k-1\geq r+1-1\geq r$. By Lemma \ref{property Q}, we
have $q_{t,l}^{u}\in Q_{t}^{u}\cap [r,n-1]=Q_{t}^{v}\cap [r,n-1]$ and
$q_{t,l}^{u}=q_{t,l}^{v}$. Hence $x_{q_{t,l}^{v}+1}=x_{q_{t,l}^{u}+1}=x_{k}$,
 $\operatorname{deg}_{x_{k}}(X_{t}^{u})\leq \operatorname{deg}_{x_{k}}(X_{t}^{v})$.

To prove $(2)$, we only need to show that if for some $t\geq r$,  $\operatorname{deg}_{x_{r}}(X_{t}^{v})=1$, then $ \operatorname{deg}_{x_{r}}(X_{t}^{u})=1$.
If $i_{t}=t+1$, then we are done. So we may assume that $i_{t}\leq t$.

If $\operatorname{deg}_{x_{r}}(X[j_{t}-m_{t}^{v}-1])=1$, then $j_{t}-m_{t}^{v}-1\geq r$. By definition, we have
 $j_{t}-m_{t}^{v}\in [j_{q_{t,m_{t}^{v}}^{v}},q_{t,m_{t}^{v}}^{v}]$, so $q_{t,m_{t}^{v}}^{v}\geq j_{t}-m_{t}^{v}\geq r+1$, $Q_{t}^{v}\subseteq [r+1,n]\subseteq [r,n]$.
By Lemma \ref{property Q}, we have $Q_{t}^{u}\cap [r,n]=Q_{t}^{v}\cap [r,n]=Q_{t}^{v}$. Moreover,
 $i_{t}-m_{t}^{v}-1=j_{t}-m_{t}^{v}-1\geq r$ and thus for any $q< r$, $i_{t}-m_{t}^{v}-1\notin [i_{q}, q]$.
So $Q_{t}^{u} =Q_{t}^{v}$, $X_{t}^{u}=X_{t}^{v}$, $ \operatorname{deg}_{x_{r}}(X_{t}^{u})=1$.

If $x_{r}=x_{q_{t,p}^{v}+1}$ for some $p\in [1,m_{t}^{v}]$, then $q_{t,p}^{v}=r-1$ and $q_{t,l}^{v}\geq r $ for any $l \leq p-1$.
Moreover, $i_{t}-p=j_{t}-p\in [j_{q_{t,p}^{v}}, q_{t,p}^{v}]=[j_{r-1},r-1]\subseteq [i_{r-1},r-1]$ and
$i_{t}-p=j_{t}-p\notin[j_{l},l]=[i_{l},l]$ for any $l\in [r , q_{t,p-1}^{v}-1]=[r , q_{t,p-1}^{u}-1]$. So $q_{t,p}^{u}=r-1$, $x_{r}=x_{q_{t,p}^{u}+1}$,
$ \operatorname{deg}_{x_{r}}(X_{t}^{u})=1$.
\end{proof}

Let $X=\{x_{1},\ldots, x_{n-1}\}$. Define an order $<$ on  the free commutative monoid $[X]$ as follows:
For any $U=x_{1}^{k_1}\cdots x_{n-1}^{k_{n-1}}\in [X]$, $V=x_{1}^{l_1}\cdots x_{n-1}^{l_{n-1}}\in [X]$,
$$
U<V \Leftrightarrow (\sum_{1\leq i\leq n-1}k_i, k_{n-1}, \ldots, k_{2}, k_1)>(\sum_{1\leq i\leq n-1}l_i, l_{n-1}, \ldots, l_{2}, l_1) \mbox{ lexicographically}.
$$
For any $f\in \mathbb{Z}[x_{1},\ldots, x_{n-1}]$, define $\bar{f}$ to be the leading monomial of $f$ with respect to the order $<$.  If the coefficient of $\bar{f}=1$, then we say that $f$ is monic. For example,
if $f=3x_{3}^{2}+2x_{3}x_{7}-7x_{5}x_{7} $, then $\bar{f}=x_{5}x_{7}$.

\begin{lemma}\label{write leading term}
For any $u=s_{1,i_{1}}\cdots s_{n-1,i_{n-1}}\in S_{n}$ $(n\geq 2)$, we
have $\partial_{u}\mathfrak{S}_{w_{0}^{n}}$ is monic and $\overline{\partial_{u}\mathfrak{S}_{w_{0}^{n}}}=X_{n-1}^{u}\cdots X_{1}^{u}$,
where $X_{j}^{u}=X[1,i_{j}-m_{j}^{u}-1]x_{1+q_{j,m_{j}^{u}}^{u}}x_{1+q_{j,m_{j}^{u}-1}^{u}}\cdots x_{1+q_{j,1}^{u}}$ for any $j\in [1,n-1]$.
\end{lemma}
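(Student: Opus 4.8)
The plan is to induct on $n$ and lean on the recursion established inside the proof of Theorem \ref{formula Q}, namely
$$\partial_{u}\mathfrak{S}_{w_{0}^{n}}=\sum_{\overrightarrow{T_{n-1}}}X_{\overrightarrow{T_{n-1}}}^{u}\,\partial_{u(\overrightarrow{T_{n-1}})}\mathfrak{S}_{w_{0}^{n-1}},$$
the sum being over those $\overrightarrow{T_{n-1}}$ for which $u(\overrightarrow{T_{n-1}})$ is defined. For $n=2$ the statement is immediate, since $\partial_{u}\mathfrak{S}_{w_{0}^{2}}=X[1,i_{1}-1]=X_{1}^{u}$. Two elementary observations organize the induction. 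First, the order $<$ ranks monomials by the lexicographic value of $(\sum_{i}k_{i},k_{n-1},\ldots,k_{1})$, and multiplying every monomial by a fixed monomial $M$ translates all these vectors by one and the same vector; since lexicographic order is translation invariant, $\overline{Mg}=M\bar{g}$, i.e. multiplication by a monomial preserves $<$. Second, by the corollary on nonnegativity every term in sight has nonnegative coefficients, so no monomial ever cancels; hence the leading monomial of the displayed sum is the $<$-largest among the leading monomials of its summands, and its coefficient is the number of summands attaining it.

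Combining these with the induction hypothesis, each summand is monic with leading monomial $L_{\overrightarrow{T_{n-1}}}:=X_{\overrightarrow{T_{n-1}}}^{u}\cdot X_{n-2}^{u(\overrightarrow{T_{n-1}})}\cdots X_{1}^{u(\overrightarrow{T_{n-1}})}$. So it suffices to show that over all admissible $\overrightarrow{T_{n-1}}$ this product is $<$-maximized at a \emph{unique} vector, and that there it equals $X_{n-1}^{u}\cdots X_{1}^{u}$. The natural candidate is the maximal choice $\overrightarrow{T_{n-1}}^{\ast}=(q_{n-1,m_{n-1}^{u}}^{u}+1,\ldots,q_{n-1,1}^{u}+1,0)$: as recorded right after the definition of $X_{\overrightarrow{T_{n-1}}}^{u}$, it yields $X_{\overrightarrow{T_{n-1}}^{\ast}}^{u}=X_{n-1}^{u}$ and $u(\overrightarrow{T_{n-1}}^{\ast})=s_{1,i_{1}}\cdots s_{n-2,i_{n-2}}$. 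Since the latter shares its first $n-2$ letters with $u$, Lemma \ref{property Q}(1) gives $Q_{t}^{u(\overrightarrow{T_{n-1}}^{\ast})}=Q_{t}^{u}$ with equal $q_{t,j}$ for all $t\le n-2$, whence $X_{t}^{u(\overrightarrow{T_{n-1}}^{\ast})}=X_{t}^{u}$; so this summand's leading monomial is exactly $X_{n-1}^{u}\cdots X_{1}^{u}$.

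It then remains to prove strict dominance: for every admissible $\overrightarrow{T_{n-1}}\ne\overrightarrow{T_{n-1}}^{\ast}$, the exponent vector of $L_{\overrightarrow{T_{n-1}}}$ (write $v=u(\overrightarrow{T_{n-1}})$ and $v^{\ast}=s_{1,i_{1}}\cdots s_{n-2,i_{n-2}}$) is lexicographically smaller than that of $X_{n-1}^{u}\cdots X_{1}^{u}$. I would run this comparison from the highest variable downward. The substitutions defining $v$ only delete generators of index $\le q_{n-1,1}^{u}$, so $v$ and $v^{\ast}$ agree in every letter of index exceeding $q_{n-1,1}^{u}$; Lemma \ref{common suffix}(1) then forces $L_{\overrightarrow{T_{n-1}}}$ and $X_{n-1}^{u}\cdots X_{1}^{u}$ to carry identical exponents on all variables $x_{k}$ with $k>1+q_{n-1,1}^{u}$ (and $X_{\overrightarrow{T_{n-1}}}^{u}$ agrees with $X_{n-1}^{u}$ there as well). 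Thus the first variable at which the two can differ is $x_{1+q_{n-1,1}^{u}}$; because both monomials are homogeneous of the same total degree and coincide strictly above this variable, the next lexicographic priority is exactly the exponent of $x_{1+q_{n-1,1}^{u}}$, which the maximal value $t_{1}=q_{n-1,1}^{u}+1$ makes largest through the factor $x_{1+q_{n-1,1}^{u}}^{P(t_{1}>q_{n-1,1}^{u})}$; Lemma \ref{common suffix}(2) is then invoked to confirm that this choice loses no ground in the coupled contributions of the later factors $X_{t}^{v}$. Iterating down through $t_{2},\ldots,t_{m_{n-1}^{u}}$ and then, by the inner induction, through the maximal choices at every subsequent level, identifies $\overrightarrow{T_{n-1}}^{\ast}$ as the unique maximizer, giving both the leading-monomial formula and the leading coefficient $1$.

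The main obstacle is precisely this last step: the lexicographic comparison is genuinely coupled, since changing a single component $t_{j}$ both alters the exponent that $X_{\overrightarrow{T_{n-1}}}^{u}$ contributes to $x_{1+q_{n-1,j}^{u}}$ and, by modifying $v=u(\overrightarrow{T_{n-1}})$, perturbs the degrees of \emph{every} later factor $X_{t}^{v}$. The delicate point will be to show that the gain the maximal choice secures at the boundary variable $x_{1+q_{n-1,j}^{u}}$ is never outweighed by the lower-variable contributions of the altered later factors; this is what conservation of total degree together with the two inequalities of Lemma \ref{common suffix} are meant to guarantee, and the real work lies in establishing the ``common suffix'' relation between $u(\overrightarrow{T_{n-1}})$ and $v^{\ast}$ at each stage and assembling the per-variable comparisons into a single clean lexicographic conclusion.
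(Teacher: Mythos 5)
Your proposal matches the paper's own proof in every essential: the same induction on $n$ through the single-level recursion extracted from the proof of Theorem \ref{formula Q}, the same identification of the maximal vector $\overrightarrow{T_{n-1}}^{\ast}$ as the unique candidate realizing $X_{n-1}^{u}\cdots X_{1}^{u}$ via Lemma \ref{property Q}(1), and the same strict-dominance comparison at the first non-maximal coordinate $t_{r}$ using both parts of Lemma \ref{common suffix} together with homogeneity and nonnegativity of coefficients. The one step you flag as ``the real work'' is exactly what the paper supplies by a short Gr\"{o}bner--Shirshov normal-form computation: writing $u(\overrightarrow{T_{n-1}})=s_{1,j_{1}}\cdots s_{n-2,j_{n-2}}$, it shows $j_{t}=i_{t}$ for all $t\geq q_{n-1,r}^{u}+1$ and $j_{q_{n-1,r}^{u}}=t_{r}+1>i_{q_{n-1,r}^{u}}$, which is precisely the common-suffix hypothesis of Lemma \ref{common suffix} needed to close your per-variable comparison (and which also settles your worry about lower-variable contributions, since equality at all variables above $x_{1+q_{n-1,r}^{u}}$ plus the strict deficit there already decides the lexicographic comparison).
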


\begin{proof}
Induction on $n$. If $n=2$, then $\partial_{u}\mathfrak{S}_{w_{0}^{n}}=X[1,i_{1}-1]=X_{1}^{u}$.
Suppose the lemma holds for any $k\leq n$. Let $k=n+1$,
 $u=s_{1,i_{1}}\cdots s_{n,i_{n}}\in S_{n+1}$, $u_{1}=s_{1,i_{1}}\cdots s_{n-1,i_{n-1}}\in S_{n}$.

If $m_{n}^{u}=0$, then  by the proof of Theorem \ref{formula Q}, we have
$$
\partial_{u}\mathfrak{S}_{w_{0}^{n+1}}
=\sum_{\overrightarrow{T_{n}}}X_{\overrightarrow{T_{n}}}^{u}\partial_{u(\overrightarrow{T_{n}})}(\mathfrak{S}_{w_{0}^{n}})
=X[1,i_{n}-1]\partial_{u_1}(\mathfrak{S}_{w_{0}^{n}})
=X_{n}^{u}\partial_{u_1}(\mathfrak{S}_{w_{0}^{n}}).
$$
By induction hypothesis, we have
$$
\overline{\partial_{u}\mathfrak{S}_{w_{0}^{n+1}}}
=X_{n}^{u}\overline{\partial_{u_1}\mathfrak{S}_{w_{0}^{n}}}
=X_{n}^{u}X_{n-1}^{u_{1}}\cdots X_{1}^{u_{1}}
=X_{n}^{u}X_{n-1}^{u }\cdots X_{1}^{u }.
$$

If $m_{n}^{u}>0$,
then  let $A=\{(t_{m_{n}^{u}},\ldots, t_{1},0 ) \mid     i_{n}-j\leq t_{j}\leq q_{n,j}^{u}+1, 1\leq j\leq m_{n}^{u}, t_{j}\in \mathbb{N}\}$.
 We have
$$
 \partial_{u}\mathfrak{S}_{w_{0}^{n+1}}
=
\sum_{\overrightarrow{T_{n}}=(t_{m_{n}^{u}},\ldots, t_{1},0 )\in A}
x^{P(t_{m_{n}^{u}  }>q_{n,m_{n}^{u}  }^{u}  )}_{1+q_{n,m_{n}^{u}   }^{u} }\cdots   x^{P(t_{1}>q_{n,1}^{u}  )}_{_{1+q_{n,1}^{u}  }}
X[1,i_{n}-m_{n}^{u}  -1]
 \partial_{u(\overrightarrow{T_{n}})}
\mathfrak{S}_{w_{0}^{n}}.
$$
So we   just need to show that if
$\overrightarrow{T_{n}}=(t_{m_{n}^{u}}, \cdots, t_{1},0)\neq (q_{n,m_{n}^{u}}^{u}+1,\ldots, q_{n,1}^{u}+1,0)$, then
$x^{P(t_{m_{n}^{u}  }>q_{n,m_{n}^{u}  }^{u}  )}_{_{_{1+q_{n,m_{n}^{u} }^{u} }  }}\cdots   x^{P(t_{1}>q_{n,1}^{u}  )}_{_{1+q_{n,1}^{u}  }}
X[1,i_{n}-m_{n}^{u}  -1]
 \overline{\partial_{u(\overrightarrow{T_{n}})}
(\mathfrak{S}_{w_{0}^{n}})}<
X_{n}^{u}\cdots X_{1}^{u}$ if $u(\overrightarrow{T_{n}})$ is defined.

Let $W(\overrightarrow{T_{n}})=X^{u}_{\overrightarrow{T_{n}}}X_{n-1}^{u(\overrightarrow{T_{n}})}\cdots X_{1}^{u(\overrightarrow{T_{n}})}
=
x^{P(t_{m_{n}^{u}  }>q_{n,m_{n}^{u}  }^{u}  )}_{_{_{1+q_{n,m_{n}^{u}  }^{u}}  }}\cdots   x^{P(t_{1}>q_{n,1}^{u}  )}_{_{1+q_{n,1}^{u}  }}
X[1,i_{n}-m_{n}^{u}  -1]\cdot X_{n-1}^{u(\overrightarrow{T_{n}})}\cdots X_{1}^{u(\overrightarrow{T_{n}})}$.
 Suppose that $t_{l}=q_{n,l}^{u}+1$ for any $1\leq l<r$ ($r\geq 1$) and $t_{r}\in [i_{q_{n,r}^{u}},q_{n,r}^{u}]$.
 Then  by the definition of $u(\overrightarrow{T_{n}})$, we have
 \begin{align*}
&u(\overrightarrow{T_{n}})&\\
 =&\left[\frac{s_{1,i_{1}}s_{2,i_{2}}\cdots s_{n-1,i_{n-1}}}{(s_{t_{m_{n }^{u}}},\ldots, s_{t_{2}},s_{t_{1}} )}\right]&\\
 =&\left[\frac{s_{1,i_{1}}s_{2,i_{2}}\cdots s_{q_{n,r}^{u}-1,i_{q_{n,r}^{u}-1}}}
 {(s_{t_{m_{n }^{u}}},\ldots, s_{t_{r+1}} )}
 \cdot \frac{s_{q_{n,r}^{u},i_{q_{n,r}^{u}}}}{s_{t_{r}}}\right]
 \cdot s_{q_{n,r}^{u}+1,i_{q_{n,r}^{u}+1}}\cdots s_{n-1,i_{n-1}}&\\
=&\left[\frac{s_{1,i_{1}}s_{2,i_{2}}\cdots s_{q_{n,r}^{u}-1,i_{q_{n,r}^{u}-1}}}
 {(s_{t_{m_{n }^{u}}},\ldots, s_{t_{r+1}} )}\cdot s_{t_{r}-1,i_{q_{n,r}^{u}}}\right]
 \cdot  s_{q_{n,r}^{u},t_{r}+1}
 \cdot s_{q_{n,r}^{u}+1,i_{q_{n,r}^{u}+1}}\cdots s_{n-1,i_{n-1}}&\\
=&s_{1,j_{1}}\cdots s_{n-1,j_{n-1}}\in S_{n}.&
\end{align*}
 By
  using the
Gr\"{o}bner-Shirshov basis of $S_{n}$, we have   $j_{t}=i_{t}$ for any $t\geq q_{n,r}^{u}+1$
and $i_{q_{n,r}^{u}}<t_{r}+1 =j_{q_{n,r}^{u}}$. By Lemma \ref{common suffix},
we have
 $\operatorname{deg}_{x_{1+q_{n,r}^{u}}}(X_{j}^{u})\geq \operatorname{deg}_{x_{1+q_{n,r}^{u}}}(X_{j}^{u(\overrightarrow{T_{n}})})$ for
 any $ n-1\geq j\geq q_{n,r}^{u}+1 $
 and
 $\operatorname{deg}_{x_{t}}(X_{j}^{u})=\operatorname{deg}_{x_{t}}(X_{j}^{u(\overrightarrow{T_{n}})})$  for any $n-1\geq j\geq t\geq q_{n,r}^{u}+2$.
Since  $\operatorname{deg}_{x_{q_{n,r}^{u}+1}}(X_{\overrightarrow{T_{n}}}^{u})=0<1=\operatorname{deg}_{x_{q_{n,r}^{u}+1}}(X_{n}^{u})$,
we have
 $\operatorname{deg}_{x_{t}}( X_{\overrightarrow{T_{n}}}^{u}
 \cdot X_{n-1}^{u(\overrightarrow{T_{n}})}\cdots X_{1}^{u(\overrightarrow{T_{n}})})=\operatorname{deg}_{x_{t}}(X_{n}^{u}\cdots X_{1}^{u})$
for any $t\geq q_{n,r}^{u}+2$ and
 $\operatorname{deg}_{x_{q_{n,r}^{u}+1}}( X_{\overrightarrow{T_{n}}}^{u}\cdot X_{n-1}^{u(\overrightarrow{T_{n}})}\cdots X_{1}^{u(\overrightarrow{T_{n}})})<\operatorname{deg}_{x_{q_{n,r}^{u}+1}}(X_{n}^{u}\cdots X_{1}^{u})$.
Since $\partial_{u}\mathfrak{S}_{w_{0}^{n}}$ is
homogeneous and the coefficients of $\partial_{u}\mathfrak{S}_{w_{0}^{n}}$ in Theorem \ref{formula Q} are nonnegative, the lemma
follows.
\end{proof}

\begin{lemma}\label{TFAE}
For any $u =s_{1,i_1}\cdots s_{n-1,i_{n-1}}\in S_{n}$, the following statements are equivalent:
\begin{enumerate}
\item[\emph{(i)}]  $s_{t}u $ is reduced.
\item[\emph{(ii)}] $i_{t-1}<i_{t}$.
\item[\emph{(iii)}]  $\operatorname{deg}_{x_{t}}(\overline{\partial_{u}\mathfrak{S}_{w_{0}^{n}}})>\operatorname{deg}_{x_{t+1}}(\overline{\partial_{u}\mathfrak{S}_{w_{0}^{n}}})$.
\end{enumerate}
Moreover, if $i_{t-1}<i_{t}$, then $\operatorname{deg}_{x_{t}}X_{t}^{u}=1$ and $\operatorname{deg}_{x_{t}}X_{j}^{u}\geq \operatorname{deg}_{x_{t+1}}X_{j}^{u}$ for any $j\in [t+1,n-1]$,
if  $i_{t-1}\geq i_{t}$, then $\operatorname{deg}_{x_{t}}X_{t}^{u}=0$ and $\operatorname{deg}_{x_{t}}X_{j}^{u}\leq  \operatorname{deg}_{x_{t+1}}X_{j}^{u}$ for any $j\in [t+1,n-1]$.
\end{lemma}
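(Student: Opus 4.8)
The plan is to prove the cycle (i) $\Leftrightarrow$ (ii) $\Leftrightarrow$ (iii) by first settling (i) $\Leftrightarrow$ (ii) through the one-line notation of $u$, then establishing the ``Moreover'' degree statements by unwinding the recursion that defines $Q_{j}^{u}$, and finally reading off (ii) $\Leftrightarrow$ (iii) from Lemma \ref{write leading term}.

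For (i) $\Leftrightarrow$ (ii) I would first record the effect of a single factor on the one-line notation. Writing $u=u's_{n-1,i_{n-1}}$ with $u'=s_{1,i_{1}}\cdots s_{n-2,i_{n-2}}\in S_{n-1}$, right multiplication by $s_{n-1,i_{n-1}}=s_{n-1}\cdots s_{i_{n-1}}$ slides the value $n$ into position $i_{n-1}$ and shifts the entries of $u'$ in positions $i_{n-1},\ldots,n-1$ one step to the right. Iterating, the one-line notation of $u$ is built by inserting the value $v$ into position $i_{v-1}$ for $v=2,\ldots,n$ (with the convention $i_{0}=1$, so value $1$ occupies position $1$). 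Since an insertion never changes the relative order of previously placed values, the order of $t$ and $t+1$ is frozen at the moment $t+1$ is inserted, when $t$ still sits in position $i_{t-1}$; hence $u^{-1}(t)<u^{-1}(t+1)$ iff $i_{t}>i_{t-1}$. As $s_{t}u$ is reduced iff $\ell(s_{t}u)=\ell(u)+1$ iff $u^{-1}(t)<u^{-1}(t+1)$, this yields (i) $\Leftrightarrow$ (ii).

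For the ``Moreover'' claims I would use the monomial formula $X_{j}^{u}=X[1,i_{j}-m_{j}^{u}-1]\,x_{1+q^{u}_{j,m_{j}^{u}}}\cdots x_{1+q^{u}_{j,1}}$, which shows that $\operatorname{deg}_{x_{t}}(X_{j}^{u})=1$ exactly when either $t\le i_{j}-m_{j}^{u}-1$ or $t-1\in Q^{u}_{j}$, and that these two alternatives are mutually exclusive. The value $\operatorname{deg}_{x_{t}}(X_{t}^{u})$ is settled directly: if $i_{t-1}<i_{t}$ then either $i_{t}\le t$ and $i_{t}-1\in[i_{t-1},t-1]$ gives $q^{u}_{t,1}=t-1\in Q^{u}_{t}$, or $i_{t}=t+1$ and $x_{t}$ lies in the prefix, so the degree is $1$; if $i_{t-1}\ge i_{t}$ then $i_{t}\le t$ keeps the prefix below $x_{t}$, while $i_{t}-1<i_{t-1}$ forces $t-1\notin Q^{u}_{t}$, so the degree is $0$. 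For $j\ge t+1$ the prefix part is monotone ($t+1\le i_{j}-m_{j}^{u}-1$ implies $t\le i_{j}-m_{j}^{u}-1$), so all the content sits in the $Q^{u}_{j}$ part, governed by two observations about the selection rule $q^{u}_{j,l}=\max\{q<q^{u}_{j,l-1}:\ i_{j}-l\in[i_{q},q]\}$. First, if $i_{t-1}<i_{t}$ and $t=q^{u}_{j,l}$, then $i_{j}-(l+1)=(i_{j}-l)-1\in[i_{t-1},t-1]$ places $t-1=q^{u}_{j,l+1}$ in $Q^{u}_{j}$, so $x_{t}$ survives whenever $x_{t+1}$ does. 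Second, if $i_{t-1}\ge i_{t}$ and $t-1\in Q^{u}_{j}$, then $t\in Q^{u}_{j}$ as well, since otherwise the scan selecting $t-1$ would have to skip $k=t$, forcing $i_{j}-l<i_{t}$ and hence $i_{t-1}<i_{t}$, a contradiction. The one genuinely delicate point is the prefix endpoint in the case $i_{t-1}\ge i_{t}$: one must exclude $t=i_{j}-m_{j}^{u}-1$ with $i_{j}\le j$, and this is exactly where Lemma \ref{property Q}(4) enters, giving $q^{u}_{j,m_{j}^{u}}>t$, so that $k=t$ would satisfy $t\in[i_{t},t]$ and lie in $M^{u}_{j,m_{j}^{u}+1}$, contradicting $M^{u}_{j,m_{j}^{u}+1}=\emptyset$.

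Finally, (ii) $\Leftrightarrow$ (iii) follows by summing. By Lemma \ref{write leading term}, $\overline{\partial_{u}\mathfrak{S}_{w_{0}^{n}}}=X_{n-1}^{u}\cdots X_{1}^{u}$, so $\operatorname{deg}_{x_{t}}(\overline{\partial_{u}\mathfrak{S}_{w_{0}^{n}}})-\operatorname{deg}_{x_{t+1}}(\overline{\partial_{u}\mathfrak{S}_{w_{0}^{n}}})=\operatorname{deg}_{x_{t}}(X_{t}^{u})+\sum_{j=t+1}^{n-1}\bigl(\operatorname{deg}_{x_{t}}(X_{j}^{u})-\operatorname{deg}_{x_{t+1}}(X_{j}^{u})\bigr)$, using $\operatorname{deg}_{x_{t+1}}(X_{j}^{u})=0$ for $j\le t$. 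When $i_{t-1}<i_{t}$ the leading summand is $1$ and every bracket is $\ge 0$, so the difference is $\ge 1$; when $i_{t-1}\ge i_{t}$ the leading summand is $0$ and every bracket is $\le 0$, so the difference is $\le 0$. Hence (iii) holds precisely when $i_{t-1}<i_{t}$, closing the equivalences. I expect the main obstacle to be the $j\ge t+1$ comparison of the previous paragraph: isolating the ``a skip forces $i_{t-1}<i_{t}$'' dichotomy inside the $Q^{u}_{j}$-recursion, together with the correct handling of the prefix endpoint through Lemma \ref{property Q}(4), is the only place where the argument is not immediate.
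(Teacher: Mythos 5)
Your proposal is correct. Its larger half --- the characterization that $\operatorname{deg}_{x_{t}}(X_{j}^{u})=1$ iff $x_{t}$ lies in the prefix $X[1,i_{j}-m_{j}^{u}-1]$ or $t-1\in Q_{j}^{u}$, the two monotonicity observations about the scan defining $q_{j,l}^{u}$ (if $i_{t-1}<i_{t}$ and $q_{j,l}^{u}=t$ then $q_{j,l+1}^{u}=t-1$; if $i_{t-1}\geq i_{t}$ and $q_{j,l}^{u}=t-1$ then a skip of $k=t$ would force $i_{j}-l<i_{t}$, so $q_{j,l-1}^{u}=t$), the exclusion of the prefix endpoint $t=i_{j}-m_{j}^{u}-1$ via the contradiction $t\in[i_{t},t]$ against $M_{j,m_{j}^{u}+1}^{u}=\emptyset$, and the final summation over $\overline{\partial_{u}\mathfrak{S}_{w_{0}^{n}}}=X_{n-1}^{u}\cdots X_{1}^{u}$ from Lemma \ref{write leading term} --- reproduces the paper's proof of (ii)$\Rightarrow$(iii) and (iii)$\Rightarrow$(ii) essentially step for step; the paper merely phrases the endpoint exclusion as two subcases $m_{j}^{u}=0$ and $m_{j}^{u}>0$ instead of routing it through Lemma \ref{property Q}(4). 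Where you genuinely diverge is (i)$\Leftrightarrow$(ii). The paper stays inside its Gr\"{o}bner-Shirshov calculus: when $i_{t-1}\geq i_{t}$ it rewrites $s_{t}u$ until a square $s_{i}s_{i}$ appears, and when $i_{t-1}<i_{t}$ it exhibits the normal form $s_{t}u=s_{1,i_{1}}\cdots s_{t-2,i_{t-2}}\cdot s_{t-1,i_{t}-1}\cdot s_{t}s_{t-1,i_{t-1}}\cdot s_{t+1,i_{t+1}}\cdots s_{n-1,i_{n-1}}$. You instead interpret the normal form as an insertion algorithm on one-line notation (value $v$ entering position $i_{v-1}$), deduce $u^{-1}(t)<u^{-1}(t+1)\Leftrightarrow i_{t-1}<i_{t}$, and quote the standard left-descent criterion $l(s_{t}u)=l(u)+1\Leftrightarrow u^{-1}(t)<u^{-1}(t+1)$. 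Your route is shorter and more conceptual, but it imports that criterion, which the paper never proves and whose translation into the paper's word-based notion of reducedness (plus a fixed composition convention, on which the insertion description of right multiplication depends) deserves a sentence. The paper's heavier rewriting has a concrete payoff downstream: the explicit normal form of $s_{t}u$ obtained in (ii)$\Rightarrow$(i) is precisely the starting point of the proof of Lemma \ref{leading term step by step}, so the paper gets it for free, whereas your version would have to rederive it there.
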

\begin{proof}
(i)$\Rightarrow $ (ii)  By the Gr\"{o}bner-Shirshov basis of $S_{n}$, we have
$s_{t}u=s_{1,i_{1}}\cdots s_{t-2,i_{t-2}}\cdot s_{t}\cdot s_{t-1,i_{t-1}}s_{t,i_{t}}\cdot s_{t+1,i_{t+1}}\cdots s_{n,i_{n}}$.
Suppose that  $i_{t-1}\geq i_{t}$.
If $i_{t-1}=t\geq i_{t}$, then
$$
s_{t}u=s_{1,i_{1}}\cdots s_{t-2,i_{t-2}}\cdot s_{t}\cdot s_{t,i_{t}}\cdot s_{t+1,i_{t+1}}\cdots s_{n,i_{n}}.
$$
So $s_{t}u$ is not reduced.
If $t-1\geq i_{t-1}\geq i_t$, then
\begin{align*}
&s_{t}u&\\
=&s_{1,i_{1}}\cdots s_{t-2,i_{t-2}}\cdot s_{t}\cdot s_{t-1,i_{t-1}}s_{t,i_{t}}\cdot s_{t+1,i_{t+1}}\cdots s_{n,i_{n}}&\\
=&s_{1,i_{1}}\cdots s_{t-2,i_{t-2}}\cdot s_{t-1}\cdot s_{t} s_{t-1,i_{t-1}}\cdot s_{t-1,i_{t}}\cdot s_{t+1,i_{t+1}}\cdots s_{n,i_{n}}&\\
=&\cdots&\\
=&s_{1,i_{1}}\cdots s_{t-2,i_{t-2}}\cdot s_{t-1, i_{t-1}+1}\cdot s_{t} s_{t-1,i_{t-1}}\cdot s_{i_{t-1}+1,i_{t}}\cdot s_{t+1,i_{t+1}}\cdots s_{n,i_{n}}&\\
=&s_{1,i_{1}}\cdots s_{t-2,i_{t-2}}\cdot s_{t-1, i_{t-1}}\cdot s_{t} s_{t-1,i_{t-1}}\cdot s_{i_{t-1},i_{t}}\cdot s_{t+1,i_{t+1}}\cdots s_{n,i_{n}}.&
\end{align*}
So $s_{t}u$ is not reduced.

Consequently, if $s_{t}u$ is   reduced,  then $i_{t-1}<i_{t}$.

(ii) $\Rightarrow$ (i)   If $i_{t-1}<i_{t}$, then  by similar reasoning as above, we have
$s_{t}u=s_{1,i_{1}}\cdots s_{t-2,i_{t-2}}\cdot s_{t-1, i_{t}-1}\cdot s_{t} s_{t-1,i_{t-1}}\cdot s_{t+1,i_{t+1}}\cdots s_{n,i_{n}}\in S_{n}$,
so $s_{t}u$ is reduced.

(ii) $\Rightarrow $ (iii)   Note that $X_{j}^{u}=X[1,i_{j}-m_{j}^{u}-1]x_{1+q_{j,m_{j}^{u}}^{u}}x_{1+q_{j,m_{j}^{u}-1}^{u}}\cdots x_{1+q_{j,1}^{u}}$.
Since $\overline{\partial_{u}\mathfrak{S}_{w_{0}^{n}}}=X_{n-1}^{u}\cdots X_{1}^{u}$, it is enough to
show that $\operatorname{deg}_{x_{t}}X_{j}^{u}\geq \operatorname{deg}_{x_{t+1}}X_{j}^{u}$ for any $j\in [t+1,n-1]$ and
$\operatorname{deg}_{x_{t}}X_{t}^{u}=1 \ (> 0=\operatorname{deg}_{x_{t+1}}X_{t}^{u})$.

If  $\operatorname{deg}_{x_{t+1}}(X[1,i_{j}-m_{j}^{u}-1])=1$ for some $j\geq t+1$, then  $\operatorname{deg}_{x_{t}}(X[1,i_{j}-m_{j}^{u}-1])=1$.
If $x_{t+1}=x_{1+q_{j,l}^{u}}$ for some $l\in [1,m_{j}^{u}]$, then $q_{j,l}^{u}=t$,
$i_{j}-l\in [i_{t},t]$,
so $i_{j}-l-1\in [i_{t}-1,t-1]\subseteq [i_{t-1},t-1]$.
By definition, $q_{j,l+1}^{u}=t-1$, and thus $x_{1+q_{j,l+1}^{u}}=x_{t}$.
By the above reasoning, we have $\operatorname{deg}_{x_{t}}X_{j}^{u}\geq \operatorname{deg}_{x_{t+1}}X_{j}^{u}$ for any $j\in [t+1,n-1]$.

If $i_{t}=t+1$, then $Q_{t}^{u}=\emptyset$ and $X_{t}^{u}=X[1,i_{t}-1]=X[1,t]$. So $\operatorname{deg}_{x_{t}}X_{t}^{u}=1$.
 If $i_{t}\leq t$, then $i_{t}-1\in [i_{t-1}, t-1]$. By definition, we have
 $q_{t,1}^{u}=t-1$, $x_{t}=x_{1+q_{t,1}^{u}}$.  So $\operatorname{deg}_{x_{t}}X_{t}^{u}=1$.

(iii) $\Rightarrow $ (ii)  We just need to show that if $i_{t-1}\geq i_{t}$, then
$\operatorname{deg}_{x_{t}}(\overline{\partial_{u}\mathfrak{S}_{w_{0}^{n}}})\leq \operatorname{deg}_{x_{t+1}}(\overline{\partial_{u}\mathfrak{S}_{w_{0}^{n}}})$.
Since $\overline{\partial_{u}\mathfrak{S}_{w_{0}^{n}}}=X_{n-1}^{u}\cdots X_{1}^{u}$, it is enough to
show that $\operatorname{deg}_{x_{t}}X_{j}^{u}\leq \operatorname{deg}_{x_{t+1}}X_{j}^{u}$ for any $j\in [t+1,n-1]$ and
$\operatorname{deg}_{x_{t}}X_{t}^{u}=0\ (=\operatorname{deg}_{x_{t+1}}X_{t}^{u})$.

If  $\operatorname{deg}_{x_{t}}(X[1,i_{j}-m_{j}^{u}-1])=1$ and $\operatorname{deg}_{x_{t+1}}(X[1,i_{j}-m_{j}^{u}-1])=0$ for some $j\geq t+1$.
Then $t=i_{j}-m_{j}^{u}-1$. Since $i_{t-1}\geq i_{t}$, we have $ i_{t}\leq i_{t-1}\leq t$.
If $m_{j}^{u}=0$, then for any $r\in [1, j-1]$, $t=i_{j}-m_{j}^{u}-1=i_{j}-1\notin [i_{r},r]$, which contradicts with  $t\in [i_{t},t]$. If
$m_{j}^{u}>0$, then $i_{j}-m_{j}^{u}=t+1\in [i_{q_{j,m_{j}^{u}}^{u}},q_{j,m_{j}^{u}}^{u}]$,
$q_{j,m_{j}^{u}}^{u}\geq t+1$. Moreover, by the definition of $m_{j}^{u}$, we know that
$t=i_{j}-m_{j}^{u}-1\notin [i_{r},r]$ for any $r\in [1, q_{j,m_{j}^{u}}^{u}-1]$, which contradicts with $t\in [i_{t},t]$. So
  if $\operatorname{deg}_{x_{t}}(X[1,i_{j}-m_{j}^{u}-1])=1$, then $\operatorname{deg}_{x_{t+1}}(X[1,i_{j}-m_{j}^{u}-1])=1$.

 If $x_{t}=x_{q_{j,l}^{u}+1}$ for some $l\in [1,m_{j}^{u}]$, then
 $i_{j}-l\in [i_{t-1},t-1]\subseteq [i_{t},t]$ and
 $i_{j}-l\notin [i_{q},q]$ for any $q\in [t,q_{j,l-1}^{u}-1]$. This is possible
 only if $q_{j,l-1}^{u}=t$, which means that $x_{t+1}=x_{q_{j,l-1}^{u}+1}$.
 By the above reasoning, we have $\operatorname{deg}_{x_{t}}X_{j}^{u}\leq \operatorname{deg}_{x_{t+1}}X_{j}^{u}$ for any $j\in [t+1,n-1]$.
Since $i_{t}\leq i_{t-1}\leq t$, we have
 $i_{t-1}> i_{t}-1$ and $i_{t}-1\leq t-1$. By the definition of $q_{t,1}^{u}$ and $X_{t}^{u}$, we have $t-1\notin Q_{t}^{u}$ and
 $\operatorname{deg}_{x_{t}}(X_{t}^{u})=0$.
\end{proof}



\begin{lemma}\label{leading term step by step}
For any $v=s_{1,i_1}\cdots s_{n-1,i_{n-1}}\in S_{n}$, if $\operatorname{deg}_{x_{t}}(\overline{\partial_{v}\mathfrak{S}_{w_{0}^{n}}})>\operatorname{deg}_{x_{t+1}}(\overline{\partial_{v}\mathfrak{S}_{w_{0}^{n}}})$,
then
$\overline{\partial_{s_{t}v}\mathfrak{S}_{w_{0}^{n}}}=\overline{\partial_{s_{t}}(\overline{\partial_{v}\mathfrak{S}_{w_{0}^{n}}})}$.
\end{lemma}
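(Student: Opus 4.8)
The plan is to evaluate both sides as explicit monomials and check that they coincide, using the leading‑term formula of Lemma~\ref{write leading term} on each side. First I would record that, by Lemma~\ref{TFAE}, the hypothesis $\operatorname{deg}_{x_t}(\overline{\partial_v\mathfrak{S}_{w_0^n}})>\operatorname{deg}_{x_{t+1}}(\overline{\partial_v\mathfrak{S}_{w_0^n}})$ is equivalent to $i_{t-1}<i_t$ and to $s_tv$ being reduced. Hence $\partial_{s_tv}=\partial_t\partial_v$ in $\mathcal{NC}_n$, so the left‑hand polynomial is exactly $\partial_tf$, where $f:=\partial_v\mathfrak{S}_{w_0^n}$. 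Writing $\bar f=X_{n-1}^v\cdots X_1^v$ (Lemma~\ref{write leading term}) in the form $\bar f=Mx_t^{k_t}x_{t+1}^{k_{t+1}}$ with $M$ free of $x_t,x_{t+1}$ and $k_t>k_{t+1}$, I would compute the right‑hand monomial directly from the displayed evaluation of $\partial_t$ on $x_t^{k_t}x_{t+1}^{k_{t+1}}$: since $M=s_tM$, one gets $\partial_t\bar f=M(x_t^{k_t-1}x_{t+1}^{k_{t+1}}+\cdots+x_t^{k_{t+1}}x_{t+1}^{k_t-1})$, whose $<$‑leading term (the one maximizing the exponent of the higher‑indexed variable $x_{t+1}$) is $\overline{\partial_t\bar f}=Mx_t^{k_{t+1}}x_{t+1}^{k_t-1}$, a monic monomial.

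Next I would put the left‑hand side into the same explicit shape. Since $s_tv$ is reduced, the rewriting carried out in the proof of Lemma~\ref{TFAE} (part (ii)$\Rightarrow$(i)), together with $s_ts_{t-1,i_{t-1}}=s_{t,i_{t-1}}$, puts $s_tv$ directly into normal form:
\[
[s_tv]=s_{1,i_1}\cdots s_{t-2,i_{t-2}}\,s_{t-1,\,i_t-1}\,s_{t,\,i_{t-1}}\,s_{t+1,i_{t+1}}\cdots s_{n-1,i_{n-1}},
\]
where one checks $1\le i_t-1\le t$ and $1\le i_{t-1}\le t+1$ so that this word lies in $B_s^n$. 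Call this element $v'$; its code agrees with that of $v$ at every position except $t-1$ (value $i_t-1$) and $t$ (value $i_{t-1}$). Applying Lemma~\ref{write leading term} to $v'$ then gives $\overline{\partial_tf}=\overline{\partial_{v'}\mathfrak{S}_{w_0^n}}=X_{n-1}^{v'}\cdots X_1^{v'}$, again monic. Because both candidate leading monomials are produced by the monic, nonnegative formula of Theorem~\ref{formula Q}, there is no cancellation to worry about: it suffices to match the two monomials.

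It therefore remains to show that the exponent vector of $X_{n-1}^{v'}\cdots X_1^{v'}$ is obtained from that of $X_{n-1}^v\cdots X_1^v$ by fixing every coordinate $\ne t,t+1$, sending the $x_t$‑degree to $k_{t+1}$, and sending the $x_{t+1}$‑degree to $k_t-1$. I would argue coordinate by coordinate. Since $\operatorname{deg}_{x_s}(X_j^u)=0$ for $j<s$, only the factors $X_j$ with $j\ge s$ contribute to $\operatorname{deg}_{x_s}$. For $s\ge t+2$ the common suffix $i_{t+1},\dots,i_{n-1}$ of $v$ and $v'$ lets me invoke Lemma~\ref{common suffix}(1) with $r=t+1$ to get factorwise equality, so these coordinates agree. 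The behaviour at $s=t,t+1$ is governed by the ``Moreover'' clauses of Lemma~\ref{TFAE}: applied to $v$ (where $i_{t-1}<i_t$) it gives $\operatorname{deg}_{x_t}X_t^v=1$ and $\operatorname{deg}_{x_t}X_j^v\ge\operatorname{deg}_{x_{t+1}}X_j^v$, while applied to $v'$ (whose code satisfies $j'_{t-1}=i_t-1\ge i_{t-1}=j'_t$) it gives $\operatorname{deg}_{x_t}X_t^{v'}=0$ and the reverse inequalities; combined with Lemma~\ref{common suffix}(2) (in the orientation $u=v'$, $v=v$, $r=t+1$, using $i_{t-1}\le i_t$) this shows the unit of degree removed by $\partial_t$ is taken from $x_{t+1}$ and that the weights on $x_t$ and $x_{t+1}$ are interchanged.

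The main obstacle is precisely this last bookkeeping at the coordinates $t-1,t,t+1$ and below, where the codes of $v$ and $v'$ differ at the two \emph{adjacent} positions $t-1,t$: changing these entries can alter $Q_j^{v'}$, $m_j^{v'}$ and hence every factor $X_j^{v'}$ with $j\ge t-1$, yet the net change to each coordinate of the product must be exactly the claimed swap‑and‑decrement. The technical heart is to transport the $Q$‑data across the two differing positions — here parts (2) and (3) of Lemma~\ref{property Q} are the right tools — and to verify that the contributions of the affected factors telescope so that the coordinates $s\le t-1$ are unchanged. Once these families of coordinate identities are assembled, comparing the resulting monomial with $\overline{\partial_t\bar f}=Mx_t^{k_{t+1}}x_{t+1}^{k_t-1}$ completes the proof.
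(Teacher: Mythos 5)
Your setup is sound and is in fact the same strategy the paper uses: via Lemma~\ref{TFAE} you reduce the hypothesis to $i_{t-1}<i_t$ (so $s_tv$ is reduced and the left side is $\partial_t\partial_v\mathfrak{S}_{w_0^n}$), you compute the right side exactly as the paper does (the paper groups the $l=|T|$ factors with $\operatorname{deg}_{x_t}=1$, $\operatorname{deg}_{x_{t+1}}=0$ and uses $\overline{\partial_t(x_t^{l})}=x_{t+1}^{l-1}$; your $Mx_t^{k_{t+1}}x_{t+1}^{k_t-1}$ is the same monomial), you identify the correct normal form $[s_tv]=s_{1,i_1}\cdots s_{t-2,i_{t-2}}\,s_{t-1,i_t-1}\,s_{t,i_{t-1}}\,s_{t+1,i_{t+1}}\cdots s_{n-1,i_{n-1}}$, and your treatment of the coordinates $s\ge t+2$ via Lemma~\ref{common suffix}(1) with $r=t+1$ is correct. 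But the proof stops exactly where the paper's proof begins in earnest. What you defer as ``bookkeeping'' and ``the technical heart'' is the entire content of the argument: the exact factorwise identities $X_t^{[s_tv]}=X_{t-1}^{v}$ and $X_{t-1}^{[s_tv]}=X_t^{v}/x_t$ (this pair is why the coordinates $s\le t-1$ are preserved --- the low-index content is swapped between two factors, not merely ``telescoping''), $X_j^{[s_tv]}=(X_j^{v}/x_t)\,x_{t+1}$ for $j\in T\setminus\{t\}$, and $X_j^{[s_tv]}=X_j^{v}$ for $j\in[t+1,n-1]\setminus T$. These are precisely steps (i)--(iii) of the paper's proof, each requiring a case analysis on whether $x_t$ enters $X_j^{v}$ through the prefix $X[1,i_j-m_j^{v}-1]$ or through some $x_{1+q_{j,k}^{v}}$, with careful tracking of how $Q_j$ and $m_j$ change across the two altered code entries.

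Moreover, the general lemmas you invoke cannot substitute for that analysis, so this is a genuine gap rather than a routine omission. Lemma~\ref{common suffix}(2) with $r=t+1$ and $u=[s_tv]$ gives only $\operatorname{deg}_{x_{t+1}}X_j^{[s_tv]}\ge \operatorname{deg}_{x_{t+1}}X_j^{v}$, i.e., a lower bound of $k_{t+1}$ on the $x_{t+1}$-exponent of the left-hand monomial, not the required exact value $k_t-1$; the ``Moreover'' clause of Lemma~\ref{TFAE} applied to $[s_tv]$ yields only $\operatorname{deg}_{x_t}X_j^{[s_tv]}\le \operatorname{deg}_{x_{t+1}}X_j^{[s_tv]}$. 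Even combined with homogeneity, agreement at coordinates $\ge t+2$, and (granting) agreement at coordinates $\le t-1$, these facts constrain the sum $\operatorname{deg}_{x_t}+\operatorname{deg}_{x_{t+1}}$ and an ordering between the two exponents, but do not force the swap-and-decrement $(k_t,k_{t+1})\mapsto(k_{t+1},k_t-1)$. Lemma~\ref{property Q}(3) is likewise inapplicable to the decisive factors: for $j\in T$ one has $t-1\in Q_j^{v}$ or $t\in Q_j^{[s_tv]}$, violating its hypothesis (the paper uses it only for the unaffected factors in its step (iii)). As written, then, your proposal is a correct plan whose central verification is missing; completing it requires proving the four factor identities above, essentially reproducing the paper's case analysis in steps (i), (ii) (Cases 1--2) and (iii) (Cases 1--2).
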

\begin{proof}
If  $\operatorname{deg}_{x_{t}}(\overline{\partial_{v}\mathfrak{S}_{w_{0}^{n}}})>\operatorname{deg}_{x_{t+1}}(\overline{\partial_{v}\mathfrak{S}_{w_{0}^{n}}})$,
then by Lemma \ref{TFAE}, we have $i_{t-1}<i_{t}$.
 Say $\overline{\partial_{v}\mathfrak{S}_{w_{0}^{n}}}=X_{1}^{v}\cdots X_{n-1}^{v}$. Let
 $$
 T=\{j\in [1,n-1]\mid  \operatorname{deg}_{x_{t}}X_{j}^{v}=1, \operatorname{deg}_{x_{t+1}}X_{j}^{v}=0 \}.
 $$
  Then by Lemma \ref{TFAE}, we have $t\in T$ and  $\operatorname{deg}_{x_{t}}X_{j}^{v}= \operatorname{deg}_{x_{t+1}}X_{j}^{v}$ for any $j\in [1,n-1]\setminus T$. We may assume
  that $T=\{p_{k}\mid 1\leq k\leq l\}\subseteq [t,n-1]$, $p_{1}=t$.   Then
 \begin{align*}
&\overline{\partial_{t}(\overline{\partial_{v}\mathfrak{S}_{w_{0}^{n}}})}&\\
 = &\overline{\partial_{t}(X_{1}^{v}\cdots X_{n-l}^{v})}&\\
 =&\overline{\partial_{t}(X_{q_{1}}^{v}\cdots X_{q_{n-l}}^{v}\cdot \frac{X_{p_{1}}^{v}}{x_{t}}        \cdots \frac{X_{p_{l}}^{v}}{x_{t}}x_{t}^{l})}&\\
 =&X_{q_{1}}^{v}\cdots X_{q_{n-l}}^{v}\cdot   \frac{X_{p_{1}}^{v}}{x_{t}}        \cdots \frac{X_{p_{l}}^{v}}{x_{t}} \overline{\partial_{t}(x_{t}^{l})}&\\
 =&X_{q_{1}}^{v}\cdots X_{q_{n-l}}^{v}\cdot   \frac{X_{p_{1}}^{v}}{x_{t}}        \cdots \frac{X_{p_{l}}^{v}}{x_{t}}  x_{t+1}^{l-1}&\\
 =&X_{q_{1}}^{v}\cdots X_{q_{n-l}}^{v}\cdot \frac{X_{p_{1}}^{v}}{x_{t}}\cdot (\frac{X_{p_{2}}^{v}}{x_{t}}\cdot x_{t+1})\cdots (\frac{X_{p_{l}}^{v}}{x_{t}}\cdot x_{t+1}),&
 \end{align*}
where $q_{1},\ldots, q_{n-l}\in [1,n-1]\setminus T$.

On the other hand,
let $u=s_{t}v$. By Lemma \ref{TFAE},   $u$ is a reduced word and
$u=s_{t}v=s_{1,i_{1}}\cdots s_{t-2,i_{t-2}}\cdot s_{t-1, i_{t}-1}\cdot s_{t} s_{t-1,i_{t-1}}\cdot s_{t+1,i_{t+1}}
\cdots s_{n,i_{n}}=s_{1,j_{1}}\cdots s_{n-1,j_{n-1}}\in S_{n}$, $j_{t}=i_{t-1}$,  $ j_{t-1}=i_{t}-1\leq t $, $j_{p}=i_{p}$
for any $p\in [1,n-1]\setminus \{t-1,t\}$.
Say $\overline{\partial_{u}\mathfrak{S}_{w_{0}^{n}}}=X_{1}^{u}\cdots X_{n-1}^{u}$.
By the definition of $X_{j}^{u}$ and Lemma \ref{property Q}, we have  $X_{j}^{u}=X_{j}^{v}$ for any $j\in [1,t-2]$.
The proof will proceed in steps.

(i) $X_{t}^{u}=X_{t-1}^{v}, X_{t-1}^{u}=\frac{X_{t}^{v}}{x_{t}}$.

  Since $u=s_{1,i_{1}}\cdots s_{t-2,i_{t-2}}\cdot s_{t-1, i_{t}-1}\cdot s_{t} s_{t-1,i_{t-1}}\cdot s_{t+1,i_{t+1}}
\cdots s_{n,i_{n}}, v=s_{1,i_1}\cdots s_{n-1,i_{n-1}}$ and $i_{t-1}<i_{t}$, it is straightforward to show  that
$Q_{t}^{v}=Q_{t-1}^{u}\cup \{t-1\}$, $Q_{t-1}^{v}=Q_{t}^{u}$, $q_{t,1}^{v}=t-1$.  Therefore
$X_{t}^{u}
=X[1,j_{t}-m_{t}^{u}-1]x_{1+q_{t, m_{t}^{u}}^{u}}\cdots x_{1+q_{t, 1}^{u}}
=X[1,i_{t-1}-m_{t-1}^{v}-1]x_{1+q_{t-1, m_{t-1}^{v}}^{v}}\cdots x_{1+q_{t-1, 1}^{v}}
=X_{t-1}^{v}$ and
$X_{t-1}^{u}
=X[1,j_{t-1}-m_{t-1}^{u}-1]x_{1+q_{t-1, m_{t-1}^{u}}^{u}}\cdots x_{1+q_{t-1, 1}^{u}}
=X[1,(i_{t}-1)-(m_{t}^{v}-1)-1]x_{1+q_{t, m_{t}^{v}}^{v}}\cdots x_{1+q_{t, 2}^{v}}
=\frac{X_{t}^{v}}{x_{t}}$.

(ii) For any $j\in [t+1,n-1]$,  if   $j\in T\setminus \{t\}$, then we have
    $X_{j}^{u}=\frac{X_{j}^{v}}{x_{t}}\cdot x_{t+1}$.

If for some    $j\in [t+1,n-1]$,      $j\in T\setminus \{t\}$,
$i_{j}=j+1$, then $X_{j}^{v}=X[1,j]$, which contradicts with $j\in T$. So we have $i_{j}\leq j$. By Lemma \ref{property Q}, we have $q_{j,m_{j}^{v}}^{v}>i_{j}-m_{j}^{v}-1$.
Note that
$X_{j}^{v}=X[1, i_{j}-m_{j}^{v}-1]x_{1+q_{j, m_{j}^{v}}^{v}}\cdots x_{1+q_{j, 1}^{v}}$. There are
two cases:

Case 1. If $x_{t}=x_{ i_{j}-m_{j}^{v}-1}$, then $ t=i_{j}-m_{j}^{v}-1\notin[i_{p},p]$ for any $ p\in [1, q_{j,m_{j}^{v}}^{v}-1]$ and $q_{j,m_{j}^{v}}^{v}-1\geq i_{j}-m_{j}^{v}-1=t$. In particular, $t\notin [i_{t},t]$, so $i_{t}=t+1$, $i_{t-1}\leq i_{t}-1=t$.
 We have
$i_{j}-(m_{j}^{v}+1)=t\in [i_{t-1},t]=[j_{t},t]$, $i_{j}-(m_{j}^{v}+1)\notin [i_{p},p]$ for any $p\in [t+1, q_{j,m_{j}^{v}}^{v}-1]$.
Therefore  $q_{j,m_{j}^{v}+1}^{u}= t$, $q_{j,k}^{u}=q_{j,k}^{v}$ for any $k\in [1,m_{j}^{v}]$. Moreover, $i_{j}-(m_{j}^{v} +1)-1=t-1\notin [t,t-1]=[i_{t}-1,t-1]$ and $t-1\notin [i_{p},p]$ for any $p\leq t-2$. It follows that $Q_{j}^{u}=Q_{j}^{v}\cup \{t\}$ and $X_{j}^{u}
 =X[1,i_{j}-(m_{j}^{v}+1)-1]\cdot x_{1+q_{j,m_{j}^{u}}^{u }}\cdot x_{1+q_{j,m_{j}^{v}}^{v}}\cdots x_{1+q_{j,1}^{v}}
=X[1,i_{j}-m_{j}^{v}-2]\cdot x_{t+1}\cdot x_{1+q_{j,m_{j}^{v }}^{v}}\cdots x_{1+q_{j,1}^{v}}=\frac{X_{j}^{v}}{x_{t}}\cdot x_{t+1}$.

Case 2. If $x_{t}=x_{1+q_{j,k}^{v}}$ for some $k\in [1,m_{j}^{v}]$, then
$q_{j,k}^{v}=t-1$. Since $j\in T$,  we have $q_{j,k-1}^{v}>t$,  $i_{j}-k\in [i_{t-1},t-1]$
and $i_{j}-k\notin \cup_{t\leq p\leq q_{j,k-1}^{v}-1}[i_{p},p]$.
Thus  $i_{j}-k\in [i_{t-1},t]=[j_{t},t]$
and $i_{j}-k\notin \cup_{t+1\leq p\leq q_{j,k-1}^{v}-1}[j_{p},p]$.
By definition, we have $q_{j,p}^{u}=q_{j,p}^{v}$ for any $p\in [1,k-1]$ and
$q_{j,k}^{u}=t$. Moreover, since $i_{j}-k\notin [i_{t},t]$, we have
$i_{j}-k-1\notin [i_{t}-1,t-1]=[j_{t-1},t-1]$. So $t-1\notin Q_{j}^{u}$.
It follows immediately that $q_{j,l}^{v}=q_{j,l}^{u}$ for any $l\in [k+1,m_{j}^{v}]$ and
$m_{j}^{v}=m_{j}^{u}$.
Therefore  $Q_{j}^{u}\setminus Q_{j}^{v}=\{t\}$, $Q_{j}^{v}\setminus Q_{j}^{u}=\{t-1\}$,
$X_{j}^{u}=\frac{X_{j}^{v}}{x_{t}}\cdot x_{t+1}$.

 (iii) For any $j\in [t+1,n-1]\setminus T$,      we have $X_{j}^{u}=X_{j}^{v}$.

 For any $j\in [t+1, n-1]\setminus T$, we have
 $\operatorname{deg}_{x_{t}}(X_{j}^{v})=\operatorname{deg}_{x_{t+1}}(X_{j}^{v})$.
 If for some    $j\in [t+1, n-1]\setminus T$,
$i_{j}=j+1$, then $X_{j}^{u}=X_{j}^{v}=X[1,j]$.
So we may assume that $i_{j}\leq j$. By Lemma \ref{property Q}, we have $q_{j,m_{j}^{v}}^{v}>i_{j}-m_{j}^{v}-1$.
 There are two cases:

 Case 1. If $\operatorname{deg}_{x_{t}}(X_{j}^{v})=\operatorname{deg}_{x_{t+1}}(X_{j}^{v})=0$, then
 $i_{j}-m_{j}^{v}-1\leq t-1$. Moreover, if $q_{j,m_{j}^{v}}^{v}\geq t+1$, then
 since $i_{j}-m_{j}^{v}-1\notin [i_{p},p]$ for any $p\in [1,q_{j,m_{j}^{v}}^{v}-1]$ by the definition of $m_{j}^{v}$, we have
    $i_{j}-m_{j}^{v}-1\notin [i_{t-1},t-1]\cup \{t\}$.
   Since $i_{t-1}\leq i_{t}-1$, we have
    $i_{j}-m_{j}^{v}-1\notin
 \cup_{1\leq p\leq t-2}[i_{p},p]\cup [i_{t}-1,t-1]\cup [i_{t-1},t] \cup  \cup_{t+1\leq p\leq q_{j,m_{j}^{v}}^{v}-1}[i_{p},p]$,
 so $m_{j}^{u}=m_{j}^{v}$, $Q_{j}^{u}=Q_{j}^{v}$, $X_{j}^{u}=X_{j}^{v}$.
 If $q_{j,m_{j}^{v}}^{v}\leq  t-2$, then there is some $k\in [0,m_{j}^{v}-1]$  such that
 $q_{j,k+1}^{v}\leq t-2$,  $q_{j,k}^{v}\geq  t+1$.
  Then $i_{j}-k-1\notin \cup_{q_{j,k+1}^{v}+1\leq p\leq q_{j,k}^{v}-1}[i_{p},p]$ and $i_{j}-k-1\in [i_{q_{j,k+1}^{v}},q_{j,k+1}^{v}]$.
   In particular,  $i_{j}-k-1\notin [i_{t-1},t-1]\cup \{t\}=[i_{t-1},t]\cup [i_{t}-1,t-1]$. So $\{t-1,t\}\cap Q_{j}^{u}=\emptyset$. By Lemma \ref{property Q}, we have
    $m_{j}^{u}=m_{j}^{v}$, $Q_{j}^{u}=Q_{j}^{v}$, $X_{j}^{u}=X_{j}^{v}$.

 Case 2.  $\operatorname{deg}_{x_{t}}(X_{j}^{v})=\operatorname{deg}_{x_{t+1}}(X_{j}^{v})=1$.
  If $i_{j}-m_{j}^{v}-1\geq t+1$, then $q_{j,m_{j}^{v}}^{v}>i_{j}-m_{j}^{v}-1\geq t+1$.
By similar reasoning as case 1, we have $m_{j}^{u}=m_{j}^{v}$, $Q_{j}^{u}=Q_{j}^{v}$, $X_{j}^{u}=X_{j}^{v}$.
If $i_{j}-m_{j}^{v}-1=t$, then  $q_{j,m_{j}^{v}}^{v}> i_{j}-m_{j}^{v}-1=t$, $q_{j,m_{j}^{v}}^{v}+1>t+1$, which
contradicts with $\operatorname{deg}_{x_{t+1}}(X_{j}^{v})=1$.
If $i_{j}-m_{j}^{v}-1<t$, then there is some $k\in [1,m_{j}^{v}-1]$ such that $1+q_{j,k}^{v}=t+1$, $1+q_{j,k+1}^{v}=t$. So
$i_{j}-k\in [i_{t},t]$, $i_{j}-k-1\in [i_{t-1},t-1]$. Consequently,
$i_{j}-k\in [i_{t-1},t]=[j_{t},t]$, $i_{j}-k-1\in [i_{t}-1,t-1]=[j_{t-1},t-1]$. By similar reasoning as case 1, we have  $m_{j}^{u}=m_{j}^{u}$, $Q_{j}^{u}=Q_{j}^{v}$, $X_{j}^{u}=X_{j}^{v}$.
\end{proof}


\begin{corollary}
For any reduced word $u=s_{t_{1}}\cdots s_{t_{p}}\in S_{n}$, we have
$$
\overline{\partial_{u}\mathfrak{S}_{w_{0}^{n}}}
=  \overline{\partial_{s_{t_{1}}} \overline{\partial_{s_{t_{2}}} \cdots(\overline{\partial_{s_{t_{p}}}\mathfrak{S}_{w_{0}^{n}}})}}.
$$
\end{corollary}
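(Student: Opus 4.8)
The plan is to induct on the length $p$ of the reduced word $u=s_{t_{1}}\cdots s_{t_{p}}$. The base cases are immediate: for $p=0$ we have $u=1$ and $\partial_{u}\mathfrak{S}_{w_{0}^{n}}=\mathfrak{S}_{w_{0}^{n}}$ is already a monomial, while for $p=1$ both sides are literally $\overline{\partial_{s_{t_{1}}}\mathfrak{S}_{w_{0}^{n}}}$, so there is nothing to prove.

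For the inductive step I would set $v:=s_{t_{2}}\cdots s_{t_{p}}$, which is a reduced word of length $p-1$ since any suffix of a reduced word is reduced. Applying the induction hypothesis to $v$ gives
$$\overline{\partial_{v}\mathfrak{S}_{w_{0}^{n}}}=\overline{\partial_{s_{t_{2}}} \overline{\partial_{s_{t_{3}}} \cdots(\overline{\partial_{s_{t_{p}}}\mathfrak{S}_{w_{0}^{n}}})}}.$$
Because Lemma \ref{leading term step by step} and Lemma \ref{TFAE} are phrased for an element written in its normal form, I would pass to $[v]=s_{1,i_{1}}\cdots s_{n-1,i_{n-1}}$; this changes nothing about leading monomials because $\partial_{v}=\partial_{[v]}$.

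The heart of the argument is to feed the reducedness of $u$ into Lemma \ref{leading term step by step}. Since $u=s_{t_{1}}v$ is reduced of length $p$ while $v$ has length $p-1$, the word $s_{t_{1}}[v]$ is reduced, that is, condition (i) of Lemma \ref{TFAE} holds at $t=t_{1}$. The equivalence (i) $\Leftrightarrow$ (iii) of Lemma \ref{TFAE} then delivers exactly the hypothesis required by Lemma \ref{leading term step by step}, namely $\operatorname{deg}_{x_{t_{1}}}(\overline{\partial_{v}\mathfrak{S}_{w_{0}^{n}}})>\operatorname{deg}_{x_{t_{1}+1}}(\overline{\partial_{v}\mathfrak{S}_{w_{0}^{n}}})$. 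Applying Lemma \ref{leading term step by step} to $[v]$ at $t=t_{1}$ yields
$$\overline{\partial_{u}\mathfrak{S}_{w_{0}^{n}}}=\overline{\partial_{s_{t_{1}}v}\mathfrak{S}_{w_{0}^{n}}}=\overline{\partial_{s_{t_{1}}}(\overline{\partial_{v}\mathfrak{S}_{w_{0}^{n}}})},$$
and substituting the induction hypothesis for the inner $\overline{\partial_{v}\mathfrak{S}_{w_{0}^{n}}}$ completes the step.

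I expect the only genuinely delicate point to be the bookkeeping that aligns the two notions of reducedness: one must check that \emph{``$s_{t_{1}}v$ is reduced as a word''} is the same statement as condition (i) \emph{``$s_{t_{1}}[v]$ is reduced''} of Lemma \ref{TFAE}, which rests on the facts recorded earlier that $\partial_{w}$ depends only on the group element represented by a reduced word and that the length function is well defined on $S_{n}$. The final substitution is legitimate because $\overline{\partial_{s_{t_{1}}}(\cdot)}$ depends on its argument only as a polynomial, so replacing $\overline{\partial_{v}\mathfrak{S}_{w_{0}^{n}}}$ by the equal nested monomial coming from the induction hypothesis is harmless.
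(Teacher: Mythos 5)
Your proof is correct and is exactly the argument the paper intends: the corollary is stated without proof as an immediate consequence of Lemmas \ref{TFAE} and \ref{leading term step by step}, and your induction on $p$ (suffix of a reduced word is reduced, pass to the normal form $[v]$ using $\partial_{v}=\partial_{[v]}$, invoke (i)$\Leftrightarrow$(iii) of Lemma \ref{TFAE} to supply the degree hypothesis, then apply Lemma \ref{leading term step by step}) simply makes that implicit argument explicit. Your care about the two notions of reducedness and about substituting the monomial from the induction hypothesis is sound and matches the paper's conventions.
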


\begin{lemma}\label{distinct leading term}
For any $u,v\in S_{n}$, if $u\neq v$, then we have
$\overline{\partial_{u}\mathfrak{S}_{w_{0}^{n}}}\neq \overline{\partial_{v}\mathfrak{S}_{w_{0}^{n}}}$.
\end{lemma}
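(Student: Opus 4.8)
The plan is to establish the contrapositive: assuming $\overline{\partial_u\mathfrak{S}_{w_0^n}}=\overline{\partial_v\mathfrak{S}_{w_0^n}}=:W$, I would show $u=v$, arguing by induction on the colength $\binom{n}{2}-l(u)$. First I would observe that equal leading monomials have equal total degree; since $\mathfrak{S}_{w_0^n}$ is homogeneous of degree $\binom{n}{2}$ and, for a reduced word $w$, each divided difference in $\partial_w$ lowers degree by one, the nonzero polynomial $\partial_w\mathfrak{S}_{w_0^n}$ is homogeneous of degree $\binom{n}{2}-l(w)$. Hence $l(u)=l(v)=:\ell$, so the colength is a well-defined induction parameter. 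The base case is $\ell=\binom{n}{2}$, where $u$ and $v$ are both forced to be the unique longest element $w_0^n$, so there is nothing to prove.

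For the inductive step I would take $\ell<\binom{n}{2}$, so that $u\neq w_0^n$. Since $w_0^n=s_{1,1}\cdots s_{n-1,1}$ is the unique element admitting no left ascent, $u$ possesses some $t$ with $s_tu$ reduced; concretely, writing $u=s_{1,i_1}\cdots s_{n-1,i_{n-1}}$ and taking the least $j$ with $i_j\geq 2$ (with the convention $i_0=1$ coming from $s_0=1$) gives $i_{j-1}<i_j$, whence $s_ju$ is reduced by the implication (ii)$\Rightarrow$(i) of Lemma \ref{TFAE}. The decisive point is the equivalence (i)$\Leftrightarrow$(iii) of Lemma \ref{TFAE}: the condition ``$s_tu$ is reduced'' is the same as the purely monomial inequality $\operatorname{deg}_{x_t}(W)>\operatorname{deg}_{x_{t+1}}(W)$, which refers to $W$ alone. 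Consequently this very same $t$ also makes $s_tv$ reduced. Now I would apply Lemma \ref{leading term step by step} to $u$ and to $v$ at this common $t$, obtaining
$$\overline{\partial_{s_tu}\mathfrak{S}_{w_0^n}}=\overline{\partial_{s_t}(W)}=\overline{\partial_{s_tv}\mathfrak{S}_{w_0^n}}.$$
Thus the normal forms $[s_tu]$ and $[s_tv]$ are elements of $S_n$ of length $\ell+1$ sharing the same leading monomial, so the induction hypothesis yields $[s_tu]=[s_tv]$; cancelling $s_t$ gives $u=v$.

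The conceptual crux is that ``multiply by a left ascent, then take the leading term'' is an operation determined by $W$ alone, which is exactly what Lemma \ref{leading term step by step} provides once reducedness is reformulated monomially via Lemma \ref{TFAE}; this lets two distinct elements with a common leading monomial be transported in lockstep up the length grading until they reach $w_0^n$ and are forced to coincide. The main thing to verify carefully is therefore the existence of a common left ascent and the fact that the induction runs \emph{upward} in length toward the base point $w_0^n$ rather than downward. All the monomial bookkeeping underlying $\overline{\partial_u\mathfrak{S}_{w_0^n}}=X_{n-1}^u\cdots X_1^u$ has already been discharged in Lemmas \ref{write leading term}, \ref{TFAE} and \ref{leading term step by step}, so no further computation is required.
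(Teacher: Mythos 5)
Your proof is correct, but it runs in the opposite direction from the paper's and is organized around a different pivot. The paper proves the statement by induction on the length $|u|$ \emph{from below}: it first disposes of $|u|\neq|v|$ by homogeneity (as you do), then writes the normal forms as $u=s_{t}u_{1}$, $v=s_{k}v_{1}$, peeling off the first generator, and splits into two cases. When $t=k$ it applies the induction hypothesis to $u_{1}\neq v_{1}$ and uses the explicit description from Lemma~\ref{leading term step by step} of how $\partial_{s_t}$ transforms the leading monomial (the exponent pair $(l_{t},l_{t+1})$ becomes $(l_{t+1},l_{t}-1)$, an injective operation for fixed $t$); when $t<k$ it observes that $s_{t}v$ is reduced while $s_{t}u$ is not, so by Lemma~\ref{TFAE} the two leading monomials have incompatible degree comparisons at positions $t,t+1$. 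You instead argue the contrapositive by induction on the colength, pushing \emph{upward} toward the base point $w_{0}^{n}$: your key observation — that the equivalence (i)$\Leftrightarrow$(iii) of Lemma~\ref{TFAE} makes ``$s_{t}u$ is reduced'' a property of the common leading monomial $W$ alone — lets you apply Lemma~\ref{leading term step by step} to $u$ and $v$ in lockstep at the same $t$, so that $[s_{t}u]$ and $[s_{t}v]$ again share a leading monomial and the induction hypothesis forces them equal, after which $s_{t}$ cancels in the group. Your route buys uniformity: there is no case split on where the normal forms first differ, and you never need the explicit form of the monomial transformation or its injectivity, only its existence; the cost is the (correctly handled) extra bookkeeping of the base case at the unique longest element and the existence of a left ascent, which your choice of the least $j$ with $i_{j}\geq 2$ (with $i_{0}=1$) settles via (ii)$\Rightarrow$(i) of Lemma~\ref{TFAE}. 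Both arguments ultimately rest on the same two lemmas, so this is a genuine but modest restructuring rather than new machinery.
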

\begin{proof}
 If $|u|\neq |v|$, then since $\partial_{u}\mathfrak{S}_{w_{0}^{n}}$ is a
homogeneous polynomial of degree $\frac{1}{2}n(n-1)-|u|$, we have
$\overline{\partial_{u}\mathfrak{S}_{w_{0}^{n}}}\neq \overline{\partial_{v}\mathfrak{S}_{w_{0}^{n}}}$. Assume $|u|= |v|$.
Induction on $|u|$. If $|u|=1$, it is trivial.
Suppose
  $|u|= |v| \geq 2$, $u=s_{t}u_{1}\in S^{\ast}$, $v=s_{k}v_{1}\in S^{\ast}$,
  $u,v, u_{1}, v_{1}$ are all  normal forms in $S_{n}$ and
 $
 \overline{\partial_{u_1}\mathfrak{S}_{w_{0}^{n}}}=x_{1}^{l_1}\cdots x_{n-1}^{l_{n-1}}x_{n}^{l_{n}},\
 l_{t}>l_{t+1},\  l_{n}=0,
 $
 $
 \overline{\partial_{v_1}\mathfrak{S}_{w_{0}^{n}}}=x_{1}^{p_1}\cdots x_{n-1}^{p_{n-1}}x_{n}^{p_{n}},\
 p_{k}>p_{k+1},\  p_{n}=0.
 $
 By Lemma \ref{leading term step by step}, we have
 $$
 \overline{\partial_{u}\mathfrak{S}_{w_{0}^{n}}}
 =\overline{\partial_{s_{t}}\overline{\partial_{u_1}\mathfrak{S}_{w_{0}^{n}}}}
 =x_{1}^{l_1}\cdots x_{t-1}^{l_{t-1}} \cdot x_{t}^{l_{t+1}} x_{t+1}^{l_{t}-1} \cdot x_{t+2}^{l_{t+2}}\cdots x_{n-1}^{l_{n-1}}x_{n}^{l_{n}};
 $$
  $$
 \overline{\partial_{v}\mathfrak{S}_{w_{0}^{n}}}
 =\overline{\partial_{s_{k}}\overline{\partial_{v_1}\mathfrak{S}_{w_{0}^{n}}}}
 =x_{1}^{p_1}\cdots x_{k-1}^{p_{k-1}} \cdot x_{k}^{p_{k+1}} x_{k+1}^{p_{k}-1} \cdot x_{k+2}^{p_{k+2}} \cdots x_{n-1}^{p_{n-1}}x_{n}^{p_{n}}.
 $$

 If $t=k$, then $u_{1}\neq v_{1}$, by induction hypothesis, there exists some
 $q\in [1,n-1]$  such that $l_{q}\neq p_{q}$.  It follows immediately that
 $\overline{\partial_{u}\mathfrak{S}_{w_{0}^{n}}}\neq \overline{\partial_{v}\mathfrak{S}_{w_{0}^{n}}}$.
 If $t\neq k$, then we may assume that $t<k$.
 Since $u, v,u_{1}, v_{1} $ are all in normal form, we have $u=s_{t,i_{t}}\cdots s_{n-1,i_{n-1}}$, $u_{1}=s_{t-1,i_{t}}s_{t+1,i_{t+1}}\cdots s_{n-1,i_{n-1}} $,
 $v=s_{k,j_{k}}\cdots s_{n-1,j_{n-1}}$. Moreover,
  $s_{t}v=s_{t}s_{k,i_{k}}\cdots s_{n-1,i_{n-1}} $
  is a normal form, hence $s_{t}v$
  is reduced. By Lemmas \ref{TFAE} and \ref{leading term step by step}, we have
 $\operatorname{deg}_{x_{t}}(\overline{\partial_{u}\mathfrak{S}_{w_{0}^{n}}})=l_{t+1}\leq l_{t}-1
 =\operatorname{deg}_{x_{t+1}}(\overline{\partial_{u}\mathfrak{S}_{w_{0}^{n}}})$
 and
 $\operatorname{deg}_{x_{t}}(\overline{\partial_{v}\mathfrak{S}_{w_{0}^{n}}})>
 \operatorname{deg}_{x_{t+1}}(\overline{\partial_{v}\mathfrak{S}_{w_{0}^{n}}})$.
Consequently, $\overline{\partial_{u}\mathfrak{S}_{w_{0}^{n}}}\neq \overline{\partial_{v}\mathfrak{S}_{w_{0}^{n}}}$.
\end{proof}

Remind that $B_{x}:=\{x_{1}^{k_{1}}\cdots x_{n-1}^{k_{n-1}}\mid k_{i}+i\leq n, 1\leq i\leq n-1\}$.
Define the map $\varphi$  to be $\varphi: S_{n} \longrightarrow B_{x}$, $\varphi(u)=\overline{\partial_{u}\mathfrak{S}_{w_{0}^{n}}}$.
By Lemma \ref{distinct leading term}, we know that $\varphi$ is an injective map.  But the cardinal of  $B_{x}$ is $n!$, so
$\varphi$ is a  bijection. The inverse of $\varphi$ can be easily
 constructed by  Lemmas  \ref{divide degree bigger} and \ref{move one index}.

Combining   Lemmas  \ref{write leading term}-\ref{distinct leading term}, we have
\begin{theorem} \label{theorem property}
For any $u\in S_{n}$ $(n\geq 2)$,  $t\in [1,n-1]$, we have the following combinatorial properties
of Schubert polynomials:
\begin{enumerate}
\item[\emph{(i)}]  $\partial_{u}\mathfrak{S}_{w_{0}^{n}}$ is monic and $\overline{\partial_{u}\mathfrak{S}_{w_{0}^{n}}}=X_{n-1}^{u}\cdots X_{1}^{u}$,
where $X_{j}^{u}=X[1,i_{j}-m_{j}^{u}-1]\prod\limits_{1\leq k\leq m_{j}^{u}}x_{1+q_{_{j,k}}^{u}} $ for any $j\in [1,n-1]$.
Moreover, the map $\varphi: S_{n} \longrightarrow B_{x}$, $\varphi(u)=X_{n-1}^{u}\cdots X_{1}^{u}$ is a
bijection. In particular, for any $v\in S_{n}$, if $u\neq v$, then
$\overline{\partial_{u}\mathfrak{S}_{w_{0}^{n}}}\neq \overline{\partial_{v}\mathfrak{S}_{w_{0}^{n}}}$.
\item[\emph{(ii)}] $s_{t}u $ is reduced if and only if  $\operatorname{deg}_{x_{t}}(\overline{\partial_{u}\mathfrak{S}_{w_{0}^{n}}})>\operatorname{deg}_{x_{t+1}}(\overline{\partial_{u}\mathfrak{S}_{w_{0}^{n}}})$.
Moreover, if $\operatorname{deg}_{x_{t}}(\overline{\partial_{u}\mathfrak{S}_{w_{0}^{n}}})>\operatorname{deg}_{x_{t+1}}(\overline{\partial_{u}\mathfrak{S}_{w_{0}^{n}}})$,
    or equivalently, $s_{t}u $ is reduced,
then
$\overline{\partial_{s_{t}u}\mathfrak{S}_{w_{0}^{n}}}=\overline{\partial_{s_{t}}(\overline{\partial_{u}\mathfrak{S}_{w_{0}^{n}}})}$.
\end{enumerate}
\end{theorem}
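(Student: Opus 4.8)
The plan is to obtain each assertion of the theorem by quoting one of the four lemmas that precede it, so that the only genuinely new work is a cardinality count. First I would dispose of the opening sentence of part (i): the claim that $\partial_{u}\mathfrak{S}_{w_{0}^{n}}$ is monic with $\overline{\partial_{u}\mathfrak{S}_{w_{0}^{n}}}=X_{n-1}^{u}\cdots X_{1}^{u}$ is exactly Lemma \ref{write leading term}, and the product shape $X_{j}^{u}=X[1,i_{j}-m_{j}^{u}-1]\prod_{1\le k\le m_{j}^{u}}x_{1+q_{j,k}^{u}}$ is merely its definition rewritten. Since $\varphi(u)=\overline{\partial_{u}\mathfrak{S}_{w_{0}^{n}}}$ by definition, Lemma \ref{write leading term} also identifies $\varphi(u)$ with $X_{n-1}^{u}\cdots X_{1}^{u}$, reconciling the two descriptions of $\varphi$. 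The final sentence of (i), that $u\ne v$ yields distinct leading monomials, is Lemma \ref{distinct leading term} verbatim.

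The one step that is not a direct quotation is the bijectivity of $\varphi$. I would first check that $\varphi$ is well defined into $B_{x}$: as $\mathfrak{S}_{w_{0}^{n}}=x_{1}^{n-1}\cdots x_{n-1}\in B_{x}$ lies in $\oplus_{b\in B_{x}}\mathbb{Z}b$, and this module is stable under each $\partial_{t}$ (recorded just before Lemma \ref{right normed delta}), every $\partial_{u}\mathfrak{S}_{w_{0}^{n}}$ lies in $\oplus_{b\in B_{x}}\mathbb{Z}b$, so its leading monomial is an element of $B_{x}$. Lemma \ref{distinct leading term} then gives injectivity. To upgrade this to surjectivity I would count: $B_{x}$ consists of the monomials $x_{1}^{k_{1}}\cdots x_{n-1}^{k_{n-1}}$ with $0\le k_{i}\le n-i$, hence $\prod_{i=1}^{n-1}(n-i+1)=n!$ of them, matching $|S_{n}|=n!$. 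An injection between finite sets of equal cardinality is a bijection, so $\varphi$ is onto.

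For part (ii), the equivalence of ``$s_{t}u$ is reduced'' with the strict inequality $\operatorname{deg}_{x_{t}}(\overline{\partial_{u}\mathfrak{S}_{w_{0}^{n}}})>\operatorname{deg}_{x_{t+1}}(\overline{\partial_{u}\mathfrak{S}_{w_{0}^{n}}})$ is precisely the equivalence (i)$\Leftrightarrow$(iii) of Lemma \ref{TFAE}, and the displayed identity $\overline{\partial_{s_{t}u}\mathfrak{S}_{w_{0}^{n}}}=\overline{\partial_{s_{t}}(\overline{\partial_{u}\mathfrak{S}_{w_{0}^{n}}})}$ under that hypothesis is exactly Lemma \ref{leading term step by step} with $v=u$. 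Since the theorem is thus a repackaging of the four lemmas, I anticipate no real obstacle; the only point requiring a moment's care is the identity $|B_{x}|=n!$, which is what converts injectivity of $\varphi$ into bijectivity, together with the (harmless) verification that $\varphi$ indeed lands in $B_{x}$.
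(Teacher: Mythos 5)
Your proposal is correct and takes essentially the same route as the paper: the paper proves the theorem by ``combining Lemmas \ref{write leading term}--\ref{distinct leading term}'', and it establishes bijectivity of $\varphi$ exactly as you do, deducing injectivity from Lemma \ref{distinct leading term} and then invoking the count $|B_{x}|=n!=|S_{n}|$ (this cardinality argument appears in the paragraph immediately preceding the theorem rather than in a displayed proof). Your added checks---that $\varphi$ lands in $B_{x}$ because $\oplus_{b\in B_{x}}\mathbb{Z}b$ is stable under the $\partial_{t}$, and the explicit product $\prod_{i=1}^{n-1}(n-i+1)=n!$---are the same facts the paper relies on implicitly.
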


For any $u\in S_{n}$, define
$$
\widetilde{u}:=\overline{\partial_{u}\mathfrak{S}_{w_{0}^{n}}}, \ \ \
 \mathfrak{S}_{\widetilde{u}}:=\partial_{u}\mathfrak{S}_{w_{0}^{n}}.
$$
Then we have

\begin{corollary}\label{commutative leading term}
For any $W \in B_{x}$, if $\operatorname{deg}_{x_{t}}(W)>\operatorname{deg}_{x_{t+1}}(W)$,  then $\partial_{t}\mathfrak{S}_{W}=\mathfrak{S}_{\overline{\partial_{t}W}}$.
\end{corollary}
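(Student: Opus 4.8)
The plan is to reduce the statement to Theorem \ref{theorem property} by translating between the two indexings — permutations $u \in S_n$ and their leading monomials $W \in B_x$ — through the bijection $\varphi$. First I would unwind the notation. Since $\varphi : S_n \to B_x$, $\varphi(u) = \overline{\partial_u \mathfrak{S}_{w_0^n}}$ is a bijection by Theorem \ref{theorem property}(i), every $W \in B_x$ equals $\widetilde{u}$ for a unique $u \in S_n$, and by definition $\mathfrak{S}_W = \mathfrak{S}_{\widetilde u} = \partial_u \mathfrak{S}_{w_0^n}$. Thus the hypothesis $\operatorname{deg}_{x_t}(W) > \operatorname{deg}_{x_{t+1}}(W)$ reads $\operatorname{deg}_{x_t}(\overline{\partial_u \mathfrak{S}_{w_0^n}}) > \operatorname{deg}_{x_{t+1}}(\overline{\partial_u \mathfrak{S}_{w_0^n}})$, which is exactly the condition appearing in Theorem \ref{theorem property}(ii).

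Next I would invoke that theorem. The degree inequality simultaneously gives that $s_t u$ is reduced and that
$$
\overline{\partial_{s_t u}\mathfrak{S}_{w_0^n}} = \overline{\partial_{s_t}(\overline{\partial_u \mathfrak{S}_{w_0^n}})} = \overline{\partial_t W}.
$$
Because $s_t u$ is reduced, we have $\partial_{s_t u} = \partial_{[s_t u]}$ for its normal form, so setting $u' = [s_t u] \in S_n$ yields $\partial_t \mathfrak{S}_W = \partial_t \partial_u \mathfrak{S}_{w_0^n} = \partial_{s_t u}\mathfrak{S}_{w_0^n} = \partial_{u'} \mathfrak{S}_{w_0^n}$.

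Finally I would read off both sides. On one hand, $\partial_t \mathfrak{S}_W = \partial_{u'}\mathfrak{S}_{w_0^n} = \mathfrak{S}_{\widetilde{u'}}$ directly from the definition of $\mathfrak{S}_{\widetilde{u'}}$. On the other hand, $\widetilde{u'} = \overline{\partial_{u'}\mathfrak{S}_{w_0^n}} = \overline{\partial_{s_t u}\mathfrak{S}_{w_0^n}} = \overline{\partial_t W}$ by the displayed identity, so $\mathfrak{S}_{\widetilde{u'}} = \mathfrak{S}_{\overline{\partial_t W}}$ and hence $\partial_t \mathfrak{S}_W = \mathfrak{S}_{\overline{\partial_t W}}$. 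Along the way I would remark that $\partial_t W \neq 0$ (so that $\overline{\partial_t W}$ is meaningful), since $\operatorname{deg}_{x_t}(W) > \operatorname{deg}_{x_{t+1}}(W)$ forces the divided difference of a monomial to be nonzero, and that $\overline{\partial_t W} = \widetilde{u'} \in B_x$, so $\mathfrak{S}_{\overline{\partial_t W}}$ is indeed defined.

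The substantive work is all contained in Theorem \ref{theorem property}(ii), which itself packages Lemmas \ref{write leading term}--\ref{leading term step by step}; this corollary is merely a change of bookkeeping from permutations to their leading monomials. Accordingly, the only point demanding care — the ``obstacle,'' such as it is — is verifying the well-definedness of $\mathfrak{S}_W$, $\overline{\partial_t W}$, and $\mathfrak{S}_{\overline{\partial_t W}}$ as each symbol is introduced, which is precisely what the bijectivity of $\varphi$ in Theorem \ref{theorem property}(i) secures.
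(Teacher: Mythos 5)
Your proposal is correct and takes essentially the same route as the paper's own proof: both identify $W=\widetilde{u}$ through the bijection $\varphi$ of Theorem \ref{theorem property}(i), invoke part (ii) to get that $s_{t}u$ is reduced with $\overline{\partial_{t}\mathfrak{S}_{W}}=\overline{\partial_{t}\bigl(\overline{\partial_{u}\mathfrak{S}_{w_{0}^{n}}}\bigr)}=\overline{\partial_{t}W}$, and finish by the definition $\mathfrak{S}_{\widetilde{u'}}=\partial_{u'}\mathfrak{S}_{w_{0}^{n}}$. Your explicit passage to the normal form $u'=[s_{t}u]$ and the well-definedness remarks only spell out steps the paper leaves implicit.
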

\begin{proof}
Since $W\in B_{x}$, by Theorem \ref{theorem property}, there is a $u\in S_{n}$ such that $W=\overline{\partial_{u}\mathfrak{S}_{w_{0}^{n}}}$.
If   $\operatorname{deg}_{x_{t}}(W)>\operatorname{deg}_{x_{t+1}}(W)$,  then $s_{t}u$ is reduced and
$\overline{\partial_{t}\mathfrak{S}_{W}}
=\overline{\partial_{t}\partial_{u}\mathfrak{S}_{w_{0}^{n}}}
=\overline{\partial_{t}\overline{\partial_{u}\mathfrak{S}_{w_{0}^{n}}}}
=\overline{\partial_{t}W}
$.
By the definition of $\mathfrak{S}_{W}$, we are done.
\end{proof}

Since
$\partial_{u}\mathfrak{S}_{w_{0}^{n}}$ is monic, we easily get the following   corollaries.

\begin{corollary}\emph{(\cite{Manivel})}
The Schubert polynomials $\partial_{u}\mathfrak{S}_{w_{0}^{n}}$, as $u$ varies over all
permutations in $S_{n}$, form an additive basis of the  free $\mathbb{Z}$-module $\oplus_{b\in B_{x}}\mathbb{Z}b$.
\end{corollary}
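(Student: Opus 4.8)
The plan is to deduce the statement from the unitriangularity already encoded in Theorem~\ref{theorem property}, reducing everything to an invertible integer change of basis. First I would record the two elementary facts that make the count work. On the one hand, $\mathfrak{S}_{w_{0}^{n}}=x_{1}^{n-1}x_{2}^{n-2}\cdots x_{n-1}$ lies in $B_{x}$ (here $k_{i}+i=n$), and since each $\partial_{t}$ maps $\oplus_{b\in B_{x}}\mathbb{Z}b$ into itself, every $\partial_{u}\mathfrak{S}_{w_{0}^{n}}$ lies in the free $\mathbb{Z}$-module $M:=\oplus_{b\in B_{x}}\mathbb{Z}b$. On the other hand, counting the admissible exponents gives $|B_{x}|=\prod_{i=1}^{n-1}(n-i+1)=n!=|S_{n}|$, so the rank of $M$ equals the number of polynomials $\partial_{u}\mathfrak{S}_{w_{0}^{n}}$.

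Next I would invoke Theorem~\ref{theorem property}(i): each $\partial_{u}\mathfrak{S}_{w_{0}^{n}}$ is monic, and the map $\varphi(u)=\overline{\partial_{u}\mathfrak{S}_{w_{0}^{n}}}$ is a bijection from $S_{n}$ onto $B_{x}$. Listing the elements of $B_{x}$ in increasing order with respect to the order $<$ as $b_{1}<b_{2}<\cdots<b_{n!}$ and setting $u_{i}:=\varphi^{-1}(b_{i})$, the monomial $b_{i}$ is the leading (that is, $<$-minimal, as in the worked example $\overline{3x_{3}^{2}+2x_{3}x_{7}-7x_{5}x_{7}}=x_{5}x_{7}$) monomial of $\partial_{u_{i}}\mathfrak{S}_{w_{0}^{n}}$. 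Since $\partial_{u_{i}}\mathfrak{S}_{w_{0}^{n}}\in M$, every monomial occurring in it lies in $B_{x}$, so
$$
\partial_{u_{i}}\mathfrak{S}_{w_{0}^{n}}=b_{i}+\sum_{j>i}c_{ij}\,b_{j},\qquad c_{ij}\in\mathbb{Z}.
$$
Thus the matrix $(c_{ij})$ expressing the family $\{\partial_{u_{i}}\mathfrak{S}_{w_{0}^{n}}\}$ in the $\mathbb{Z}$-basis $\{b_{i}\}$ of $M$ is upper unitriangular.

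Finally, an upper unitriangular integer matrix has determinant $1$ and is therefore invertible over $\mathbb{Z}$; hence the family $\{\partial_{u}\mathfrak{S}_{w_{0}^{n}}\mid u\in S_{n}\}$ is obtained from the $\mathbb{Z}$-basis $B_{x}$ of $M$ by an invertible $\mathbb{Z}$-linear transformation, and so is itself a $\mathbb{Z}$-basis of $M=\oplus_{b\in B_{x}}\mathbb{Z}b$, which is exactly the claim.

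I do not expect a genuine obstacle: the substantive work has been front-loaded into Lemma~\ref{write leading term} (monicity and the explicit leading monomial) and Lemma~\ref{distinct leading term} (distinctness of leading monomials), packaged in Theorem~\ref{theorem property}. The only points demanding care are bookkeeping: verifying $|B_{x}|=n!$, noting that $\overline{(\cdot)}$ denotes the $<$-minimal monomial (so the remaining terms are strictly larger and the matrix is upper rather than lower triangular), and confirming that every monomial of $\partial_{u}\mathfrak{S}_{w_{0}^{n}}$ indeed lies in $B_{x}$, so that the change-of-basis matrix is genuinely square and indexed by $B_{x}$.
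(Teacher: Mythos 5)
Your proposal is correct and matches the paper's intent: the paper gives no detailed proof, remarking only that the corollary follows easily from the monicity of $\partial_{u}\mathfrak{S}_{w_{0}^{n}}$ together with Theorem \ref{theorem property}, and your unitriangular change-of-basis argument is exactly the standard filling-in of that remark. You also correctly handle the two bookkeeping points the paper leaves implicit --- that $|B_{x}|=n!$ and that $\overline{f}$ is the $<$-minimal monomial, so the transition matrix is upper unitriangular over $\mathbb{Z}$.
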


\begin{corollary}\label{z basis} \emph{(\cite{Manivel})}
The Schubert polynomials $\mathfrak{S}_{\widetilde{u}}$,  as $u$ varies over all
permutations in $S_{\infty}$, form an additive basis of the free polynomial ring
$\mathbb{Z}[x_{1},x_{2}, \cdots ,x_{n},\cdots ]$.
\end{corollary}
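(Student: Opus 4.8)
The plan is to deduce the infinite statement from the finite one already proved, namely the preceding corollary that for each fixed $n$ the family $\{\partial_{u}\mathfrak{S}_{w_{0}^{n}}\mid u\in S_{n}\}$ is a $\mathbb{Z}$-basis of the module $M_{n}:=\oplus_{b\in B_{x}}\mathbb{Z}b$, by letting $n\to\infty$. The only genuine difficulty is that the indexing by $u$ is \emph{not} stable: for a fixed $u$ the polynomial $\partial_{u}\mathfrak{S}_{w_{0}^{n}}$ changes with $n$ (already for $u=1$ one has $\partial_{u}\mathfrak{S}_{w_{0}^{n}}=x_{1}^{n-1}\cdots x_{n-1}$, which depends on $n$). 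I would therefore first reindex each finite basis by the permutation $w=w_{0}^{n}u^{-1}$: since $\partial_{u}\mathfrak{S}_{w_{0}^{n}}=\mathfrak{S}_{w_{0}^{n}u^{-1}}$ and $u\mapsto w_{0}^{n}u^{-1}$ is a bijection of $S_{n}$, the finite basis equals $\{\mathfrak{S}_{\widetilde{u}}\mid u\in S_{n}\}=\{\mathfrak{S}_{w}\mid w\in S_{n}\}$, and it is the $w$-indexing that is stable.

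Next I would record two nestings. For the modules, a monomial $x_{1}^{k_{1}}\cdots x_{n-1}^{k_{n-1}}$ in the $B_{x}$ attached to $S_{n}$ satisfies $k_{i}+i\leq n\leq n+1$ and carries exponent $0$ on $x_{n}$, so it also lies in the $B_{x}$ attached to $S_{n+1}$; hence $M_{n}\subseteq M_{n+1}$. Moreover any monomial $x_{1}^{k_{1}}\cdots x_{m}^{k_{m}}$ lies in $M_{n}$ as soon as $n\geq\max\{m+1,\ \max_{i}(k_{i}+i)\}$, so $\bigcup_{n}M_{n}=\mathbb{Z}[x_{1},x_{2},\ldots]$. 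For the bases, the stability property recorded earlier in the paper (for $w\in S_{n}\subseteq S_{n+1}$ one has $[w^{-1}w_{0}^{n+1}]=[w^{-1}w_{0}^{n}]s_{n,1}$, whence $\mathfrak{S}_{w}$ is one and the same polynomial whether computed in $S_{n}$ or in $S_{n+1}$) shows that $\{\mathfrak{S}_{w}\mid w\in S_{n}\}$ is literally a subset of $\{\mathfrak{S}_{w}\mid w\in S_{n+1}\}$. Thus the finite bases form an increasing chain of families of polynomials whose union is exactly $\{\mathfrak{S}_{\widetilde{u}}\mid u\in S_{\infty}\}$.

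Finally I would run the standard direct-limit argument. For spanning, any $f\in\mathbb{Z}[x_{1},x_{2},\ldots]$ involves only finitely many monomials, so $f\in M_{N}$ for $N$ large; the preceding corollary writes $f$ as a $\mathbb{Z}$-combination of $\{\mathfrak{S}_{w}\mid w\in S_{N}\}$, hence of the full family. For linear independence, any finite relation $\sum_{w}c_{w}\mathfrak{S}_{w}=0$ involves only permutations lying in some $S_{N}$, and since $\{\mathfrak{S}_{w}\mid w\in S_{N}\}$ is a basis of $M_{N}$, all the $c_{w}$ vanish. Together these give the claim. I expect the main obstacle to be conceptual rather than computational: one must notice that the naive indexing by $u$ does not nest and replace it by the stable indexing by $w$, after which the passage to the limit is routine.
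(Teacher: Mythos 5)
Your proposal is correct and takes essentially the same route the paper leaves implicit: the paper derives this corollary (in tandem with the preceding one) from the monicity of $\partial_{u}\mathfrak{S}_{w_{0}^{n}}$, the bijection $\varphi:S_{n}\to B_{x}$ of Theorem \ref{theorem property}, and the stability $\mathfrak{S}_{w}=\partial_{[w^{-1}w_{0}^{n}]}(\mathfrak{S}_{w_{0}^{n}})=\partial_{[w^{-1}w_{0}^{n+1}]}(\mathfrak{S}_{w_{0}^{n+1}})$ recorded in Section 2, which are exactly the ingredients of your direct-limit argument. Your explicit observation that the indexing by $u$ is unstable and must be replaced by the stable indexing $w=w_{0}^{n}u^{-1}$ is a correct filling-in of a detail the paper dispatches with ``we easily get.''
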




\section{Algorithms for multiplication of Schubert polynomials}\label{section algorithms}

For any Schubert polynomials $\mathfrak{S}_{u}$, $\mathfrak{S}_{v}$,
by Corollary \ref{z basis}, we know that there are   structure constants $c_{u,v}^{w}\in \mathbb{Z}$ such that
 $$
 \mathfrak{S}_{u} \mathfrak{S}_{v}=\sum_{w} c_{u,v}^{w}\mathfrak{S}_{w}.
 $$
It is well known that the coefficients are all nonnegative (for example, see \cite{Fulton}),
but there is no combinatorial proof yet.

One of the most famous formula for multiplications of Schubert polynomials is   \textbf{Monk's formula} \cite{Monk,Fulton}:
$$
 \mathfrak{S}_{s_{k}} \mathfrak{S}_{w}=\sum_{v} \mathfrak{S}_{v},
$$
where the summation is over all $v$  such that $v=w\cdot s_{p}s_{p+1}\cdots  s_{q-2}s_{q-1,p}$ and
$l(v)=l(w)+1$, where  $p\leq k$ and  $q>k$. For example, $\mathfrak{S}_{s_{2}}\cdot\mathfrak{S}_{s_{2}} =\mathfrak{S}_{s_{1}s_{2}}+\mathfrak{S}_{s_{3}s_{2}}$.

We will offer   algorithms to calculate the  structure constants in the sequel. However, for simplicity of the algorithms, we will use the notation $\mathfrak{S}_{\widetilde{u}}$. Since $\mathfrak{S}_{s_{k}}=\sum\limits_{i\in [1,k]}x_{i}$, we have $\mathfrak{S}_{s_{k}}= \mathfrak{S}_{x_{k}}$.  Assume that $u\in S_{n-k}$. Then  by   Monk's formula, we have
$$
 \mathfrak{S}_{x_{k}} \mathfrak{S}_{\widetilde{u}}=\sum_{\widetilde{v}} c_{x_{k},\widetilde{u}}^{\widetilde{v}}\mathfrak{S}_{\widetilde{v}},
$$
where
 the summation is over all $\widetilde{v}$  such that $w_{0}^{n}v^{-1}=w_{0}^{n}u^{-1}\cdot s_{p}s_{p+1}\cdots  s_{q-2}s_{q-1,p}$,
$l(w_{0}^{n}v^{-1})=l(w_{0}^{n}u^{-1})+1$ where  $p\leq k$ and  $q>k$.

By Lemma \ref{leading term step by step}, we have the following lemma:
\begin{lemma}\label{divide degree 1}
Let $W=x_{1}^{j_{1}}\cdots x_{n-1}^{j_{n-1}}\in B_{x}$, $j_{n}=0$.
If for some $p_{1},\cdots , p_{m}\in [1,n-1]$, $p_{1}<p_{2}< \cdots < p_{m}$, $j_{p_{t}}=j_{p_{t}+1}+1$ for any $t\in [1,m]$, then
$\partial_{p_{m}}\cdots \partial_{p_{2}}\partial_{p_{1}}\mathfrak{S}_{W}=\mathfrak{S}_{V}$,
where $V=\frac{W}{x_{p_{1}}\cdots x_{p_{m}}}$.
\end{lemma}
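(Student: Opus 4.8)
The plan is to apply the divided differences one at a time, from the rightmost $\partial_{p_1}$ outward, identifying the resulting Schubert polynomial at each stage by means of Corollary \ref{commutative leading term}. To this end set $W_0=W$ and, for $1\le t\le m$, let $W_t=W/(x_{p_1}\cdots x_{p_t})$, so that $W_m=V$; since the indices $p_1<\cdots<p_m$ are distinct, $W_t$ is obtained from $W$ by lowering the exponent of each of $x_{p_1},\dots,x_{p_t}$ by exactly one. Each $W_t$ is again a monomial with nonnegative exponents (because $j_{p_s}=j_{p_s+1}+1\ge 1$) whose exponents do not exceed those of $W$, so $W_t\in B_x$ and $\mathfrak{S}_{W_t}$ is defined. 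The goal is to prove $\partial_{p_t}\mathfrak{S}_{W_{t-1}}=\mathfrak{S}_{W_t}$ for every $t$, for then chaining these equalities gives $\partial_{p_m}\cdots\partial_{p_1}\mathfrak{S}_W=\mathfrak{S}_{W_m}=\mathfrak{S}_V$.

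First I would record the one-step computation. If $M$ is a monomial with $\operatorname{deg}_{x_p}(M)=\operatorname{deg}_{x_{p+1}}(M)+1$, write $b=\operatorname{deg}_{x_{p+1}}(M)$; then the formula for $\partial_p(x_p^{b+1}x_{p+1}^{b})$ given before Lemma \ref{right normed delta} (the case $t>l$ with $t-l=1$) collapses to the single term $x_p^{b}x_{p+1}^{b}$, so $\partial_p M=M/x_p$ is itself a monomial and $\overline{\partial_p M}=M/x_p$. Combined with Corollary \ref{commutative leading term}, whose hypothesis $\operatorname{deg}_{x_p}(M)>\operatorname{deg}_{x_{p+1}}(M)$ is met, this yields $\partial_p\mathfrak{S}_M=\mathfrak{S}_{\overline{\partial_p M}}=\mathfrak{S}_{M/x_p}$.

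It remains to check that this one-step rule applies to $M=W_{t-1}$ at the index $p_t$, i.e.\ that $\operatorname{deg}_{x_{p_t}}(W_{t-1})=\operatorname{deg}_{x_{p_t+1}}(W_{t-1})+1$. This is where the ordering $p_1<\cdots<p_m$ does the work: passing from $W$ to $W_{t-1}$ only lowers the exponents of $x_{p_1},\dots,x_{p_{t-1}}$, and every one of these indices satisfies $p_s<p_t<p_t+1$. Hence neither the exponent of $x_{p_t}$ nor that of $x_{p_t+1}$ is altered, so $\operatorname{deg}_{x_{p_t}}(W_{t-1})=j_{p_t}$ and $\operatorname{deg}_{x_{p_t+1}}(W_{t-1})=j_{p_t+1}$, and the hypothesis $j_{p_t}=j_{p_t+1}+1$ gives exactly the needed relation. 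The one-step rule then gives $\partial_{p_t}\mathfrak{S}_{W_{t-1}}=\mathfrak{S}_{W_{t-1}/x_{p_t}}=\mathfrak{S}_{W_t}$, which completes the induction.

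The computations here are all routine; the only point requiring care, and hence the main (if minor) obstacle, is confirming that the degree inequality needed for Corollary \ref{commutative leading term} survives the earlier divisions. The monotonicity $p_1<\cdots<p_m$ is precisely what guarantees this, since it keeps every previously-used index strictly below the pair $\{p_t,p_t+1\}$ on which $\partial_{p_t}$ acts; without it one could divide away an exponent at $p_t$ or $p_t+1$ before reaching step $t$ and break the hypothesis.
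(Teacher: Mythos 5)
Your proof is correct and takes essentially the same route as the paper's: both peel off one divided difference at a time (the paper phrases this as induction on $m$), identifying each intermediate polynomial as $\mathfrak{S}_{W_t}$ via the leading-monomial machinery of Corollary \ref{commutative leading term}, with the monotonicity $p_{1}<\cdots<p_{m}$ guaranteeing that the exponent relation $\operatorname{deg}_{x_{p_t}}=\operatorname{deg}_{x_{p_t+1}}+1$ survives the earlier divisions. Your explicit check of that preservation step (and of $W_t\in B_x$) is a detail the paper leaves implicit, but the argument is the same.
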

\begin{proof}
Since $W\in B_{x}$, by Theorem \ref{theorem property}, there is a $u\in S_{n}$ such that $W=\overline{\partial_{u}\mathfrak{S}_{w_{0}^{n}}}$.
Induction on $m$. If $m=1$, then
$
\operatorname{deg}_{x_{p_{1}}}(\overline{\partial_{u}\mathfrak{S}_{w_{0}^{n}}})=j_{p_{1}}>
j_{p_{1}+1}=\operatorname{deg}_{x_{p_{1}+1}}(\overline{\partial_{u}\mathfrak{S}_{w_{0}^{n}}})
$.
So $\overline{\partial_{p_{1}}\mathfrak{S}_{W}}
=\overline{\partial_{p_{1}}\partial_{u}\mathfrak{S}_{w_{0}^{n}}}
=\overline{\partial_{p_{1}}\overline{\partial_{u}\mathfrak{S}_{w_{0}^{n}}}}
=\overline{\partial_{p_{1}}W}=V.
$
 By the definition of $\mathfrak{S}_{V}$, we have $\partial_{p_{1}}\mathfrak{S}_{W}=\mathfrak{S}_{V}$.

 Suppose the assertion holds for any  $k<m$. Then $\partial_{p_{m-1}}\cdots \partial_{p_{2}}\partial_{p_{1}}\mathfrak{S}_{W}=\mathfrak{S}_{V_1}$,  $\partial_{p_{m}}\mathfrak{S}_{V_1}=\mathfrak{S}_{V}$,
where $V_1=\frac{W}{x_{p_{1}}\cdots x_{p_{m-1}}}$ and
 $V=\frac{V_{1}}{x_{p_{m}}}=\frac{W}{x_{p_{1}}\cdots x_{p_{m}}}$.
\end{proof}

For any $W=x_{1}^{j_{1}}\cdots x_{n-1}^{j_{n-1}}\in B_{x}$,  $j_{1}\geq j_{2}\geq \cdots \geq j_{n-1}$, define
$$P_{k}^{W}=\{p\in [1,n-1]\mid  n-p-j_{p}\geq k\}=\{p_{k,1},\ldots, p_{k,t_{k}}\} \ \ (t_{k}=0 \mbox{ if  } P_{k}^{W}=\emptyset), $$
where $p_{k,i}<p_{k,j}$ if $i<j$. In particular, $P_{0}^{W}=[1,n-1]$.
If $P_{k}^{W}\neq \emptyset$ for some $k>0$, then define
$$
v_{k}^{W}=s_{p_{k,t_{k}}}\cdots s_{p_{k,1}}.
$$
Then we have the following lemma:

\begin{lemma}\label{divide degree bigger}
For any $W=x_{1}^{j_{1}}\cdots x_{n-1}^{j_{n-1}}\in B_{x}$,  $j_{1}\geq j_{2}\geq \cdots \geq j_{n-1}$, let
$m=\operatorname{max}\{k\in [0,n-1]\mid  P_{k}^{W}\neq \emptyset \}$. If $m>0$, then  for any $k\in [1,m]$, we have
$$
\partial_{v_{k}^{W}v_{k-1}^{W}\cdots v_{1}^{W}}\mathfrak{S}_{w_{0}^{n}}=
\frac{x_{1}^{n-1}x_{2}^{n-2}\cdots x_{n-1}^{1} }{x_{p_{k,t_{k}}}\cdots x_{p_{k,1}}\cdots x_{p_{2,t_{2}}}\cdots x_{p_{2,1}}x_{p_{1,t_{1}}}\cdots x_{p_{1,1}} }.
$$
In particular, $
\partial_{v_{m}^{W}v_{m-1}^{W}\cdots v_{1}^{W}}\mathfrak{S}_{w_{0}^{n}}=W$.
\end{lemma}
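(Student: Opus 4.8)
The plan is to prove the displayed identity by induction on $k$, with the ``in particular'' being the instance $k=m$. Throughout I abbreviate $W_k:=\frac{x_1^{n-1}x_2^{n-2}\cdots x_{n-1}^{1}}{\prod_{i=1}^{k}\prod_{p\in P_i^W}x_p}$ for $0\le k\le m$ (here $P_i^W\supseteq P_{i+1}^W$, so every $P_i^W$ with $i\le m$ is nonempty and each $v_i^W$ is defined), noting that $W_0=\mathfrak{S}_{w_0^n}$ is the staircase monomial and $v_0^W\cdots v_1^W$ is empty. The goal becomes: for every $k\in[0,m]$, the polynomial $\partial_{v_k^W\cdots v_1^W}\mathfrak{S}_{w_0^n}$ equals the single monomial $W_k$. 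The base case $k=0$ is immediate, and the final ``$=W$'' will follow from $k=m$.

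The first key step is to write down the exponents of $W_{k-1}$ explicitly. Since $p\in P_i^W\iff n-p-j_p\ge i$ and $0\le n-p-j_p$ (because $W\in B_x$ forces $j_p\le n-p$), the variable $x_p$ occurs in the denominator of $W_{k-1}$ with multiplicity $\min(k-1,\,n-p-j_p)$, so
\[
\operatorname{deg}_{x_p}(W_{k-1})=(n-p)-\min(k-1,\,n-p-j_p)=\begin{cases} n-p-k+1,& n-p-j_p\ge k-1,\\ j_p,& n-p-j_p\le k-2.\end{cases}
\]
In particular every exponent is at most the staircase value $n-p$, so $W_{k-1}\in B_x$.

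The second and main step verifies that $\partial_{v_k^W}=\partial_{p_{k,t_k}}\cdots\partial_{p_{k,1}}$ merely deletes one copy of each $x_p$ with $p\in P_k^W$. For this I check the step-down hypothesis $\operatorname{deg}_{x_p}(W_{k-1})=\operatorname{deg}_{x_{p+1}}(W_{k-1})+1$ of Lemma \ref{divide degree 1} for every $p\in P_k^W$: such a $p$ has $n-p-j_p\ge k>k-1$, giving $\operatorname{deg}_{x_p}(W_{k-1})=n-p-k+1$, and here the monotonicity $j_1\ge\cdots\ge j_{n-1}$ is essential, since $j_{p+1}\le j_p$ forces $n-(p+1)-j_{p+1}\ge n-p-j_p-1\ge k-1$, whence $\operatorname{deg}_{x_{p+1}}(W_{k-1})=n-p-k$; the difference is exactly $1$. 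By the induction hypothesis $W_{k-1}=\partial_{v_{k-1}^W\cdots v_1^W}\mathfrak{S}_{w_0^n}=\mathfrak{S}_{W_{k-1}}$ is a single monomial, so $\partial_{v_k^W\cdots v_1^W}\mathfrak{S}_{w_0^n}=\partial_{v_k^W}W_{k-1}$; applying Lemma \ref{divide degree 1} with $W_{k-1}$ in the role of $W$ and $\{p_{k,1}<\cdots<p_{k,t_k}\}=P_k^W$ identifies this with $\mathfrak{S}_V$, $V=W_{k-1}/(x_{p_{k,1}}\cdots x_{p_{k,t_k}})=W_k$. That the value is genuinely the single monomial $W_k$ is seen directly from the divided-difference rule: applying $\partial_{p_{k,1}},\partial_{p_{k,2}},\dots$ in increasing order, each operator meets a monomial with $\operatorname{deg}_{x_{p}}=\operatorname{deg}_{x_{p+1}}+1$ and so returns its quotient $M/x_{p}$ (the single-term case $t=l+1$ of the formula for $\partial_i(x_i^{t}x_{i+1}^{l})$), while leaving every other exponent—hence the step-down conditions governing the later operators—untouched. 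This yields $W_k$ and closes the induction. The case $k=m$ gives the ``in particular'': as $m=\max_p(n-p-j_p)$ we have $n-p-j_p\le m$ for all $p$, so $\operatorname{deg}_{x_p}(W_m)=(n-p)-(n-p-j_p)=j_p$, i.e. $W_m=W$.

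The step I expect to be the real obstacle is the exponent bookkeeping of the third paragraph: pinning down that each index $p+1$ lands in the ``$\ge k-1$'' branch of the case distinction, which is precisely where weak monotonicity of the $j_p$ is used, and confirming that the successive divided differences composing $v_k^W$ do not spoil one another's step-down conditions. Once these are settled, the remainder is the explicit multiplicity count and a direct appeal to Lemma \ref{divide degree 1} together with the divided-difference formula.
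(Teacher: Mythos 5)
Your proposal is correct and takes essentially the same route as the paper's proof: induction on $k$, reducing each round to Lemma \ref{divide degree 1} after checking---via the monotonicity $j_{p}\geq j_{p+1}$, exactly as the paper does---that every $p\in P_{k}^{W}$ satisfies the step-down condition $\operatorname{deg}_{x_{p}}(W_{k-1})=\operatorname{deg}_{x_{p+1}}(W_{k-1})+1$ in the monomial produced by the first $k-1$ rounds. Your closed-form exponent count $\operatorname{deg}_{x_{p}}(W_{k-1})=(n-p)-\min(k-1,\,n-p-j_{p})$ and your explicit check that each intermediate result remains a single monomial merely make explicit what the paper establishes by squeezing $t=k-1$ to conclude $l,l+1\in P_{k-1}^{W}$, so there is no substantive difference in approach.
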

\begin{proof}
Let $V_{k}=\frac{x_{1}^{n-1}x_{2}^{n-2}\cdots x_{n-1}^{1} }{x_{p_{k,t_{k}}}\cdots x_{p_{k,1}}\cdots x_{p_{2,t_{2}}}\cdots x_{p_{2,1}}x_{p_{1,t_{1}}}\cdots x_{p_{1,1}} }$.
Induction on $k$.  If $k=1$, then by Lemma \ref{divide degree 1},
we have $\mathfrak{S}_{V_{1}}
=\partial_{p_{1,t_{1}}}\cdots \partial_{p_{1,1}}\mathfrak{S}_{x_{1}^{n-1}x_{2}^{n-2}\cdots x_{n-1}^{1}}
=\partial_{p_{1,t_{1}}}\cdots \partial_{p_{1,1}}(x_{1}^{n-1}x_{2}^{n-2}\cdots x_{n-1}^{1})=V_{1}$.

Suppose the lemma holds for any $q< k$, $k\geq 2$.
If $l, l+1\in P_{k}^{W}$, then  $l, l+1\in P_{k-1}^{W}$.
If  $l\in P_{k}^{W}$, $l+1\notin P_{k}^{W}$,  then  there is some integer $t$ such that
 $l+1 \in P_{t}^{W}$, $l+1\notin P_{t+1}^{W}$. It is clear that  $t\leq k-1$.
 Moreover, by the definition of $P_{k}^{W}$, we have $n-l-j_{l}\geq k$, $n-(l+1)-j_{l+1}=t$. So
 $n-(l+1)-t=j_{l+1}\leq j_{l}\leq n-l-k$. It follows that  $k-1\geq t\geq k-1$, i.e., $t=k-1$.
 Therefore  $l, l+1\in P_{k-1}^{W}$ and
$\operatorname{deg}_{x_{l}}(\partial_{ v_{k-1}^{W}\cdots v_{1}^{W}}\mathfrak{S}_{w_{0}^{n}})= n-l-(k-1)
=n-(l+1)-(k-1)+1=\operatorname{deg}_{x_{l+1}}(\partial_{ v_{k-1}^{W}\cdots v_{1}^{W}}\mathfrak{S}_{w_{0}^{n}})+1$.
By Lemma \ref{divide degree 1} and induction hypothesis, the lemma follows.
\end{proof}

\begin{lemma}\label{move one index}
  Given $t\in [0,n-2]$, $W_{t}=x_{1}^{j_{1}}\cdots x_{n-1}^{j_{n-1}}\in B_{x}$ such that
 $  j_{1}+1\geq j_{2}+2\geq \cdots \geq j_{t}+t\geq \operatorname{max}\{j_{k}+k\mid k\in [t+1,n-1]\}$, define
 $$
 v_{t+1}^{W_{t}}=s_{k-1,t+1},\ \   W_{t+1}=x_{1}^{j_{1}}\cdots x_{t}^{j_{t}}
 \cdot x_{t+1}^{j_{k}+k-t-1}
 \cdot \prod_{l\in [t+2,k]}x_{l}^{j_{l-1}} \cdot
 x_{k+1}^{j_{k+1}}\cdots x_{n-1}^{j_{n-1}},
 $$
where $k\in [t+1,n-1]$ is as small as possible such that $j_{k}+k=\operatorname{max}\{j_{k}+k\mid k\in [t+1,n-1]\}$. Then
  we have $\overline{\partial_{v_{_{t+1}}^{W_{t}}}\mathfrak{S}_{W_{t+1}}}=W_{t}$.

 In particular, for any $W_{0}\in B_{x}$, we can construct
 $ v_{1}^{W_{0}},W_{1},  \cdots,v_{n-2}^{W_{n-3}}, W_{n-2}$ such that $\overline{\partial_{v_{i}^{W_{i-1}}}\mathfrak{S}_{W_{i}}}=W_{i-1}$, which implies that
 $\overline{\partial_{v_{_{1}}^{W_{0}}}\cdots
 \partial_{v_{_{n-3}}^{W_{n-4}}}\partial_{v_{_{n-2}}^{W_{n-3}}}\mathfrak{S}_{W_{n-2}}}=W_{0}$. Moreover, if
 $W_{i}=x_{1}^{l_{i,1}}\cdots x_{n-1}^{l_{i, n-1}}$, then $l_{i,1}+1\geq l_{i,2}+2\geq \cdots \geq l_{i,i}+i\geq \operatorname{max}\{l_{i,k}+k\mid k\in [i+1,n-1]\}$.
\end{lemma}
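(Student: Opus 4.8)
The plan is to peel off the factors of $\partial_{v_{t+1}^{W_t}}=\partial_{s_{k-1,t+1}}=\partial_{k-1}\partial_{k-2}\cdots\partial_{t+1}$ one at a time, invoking Corollary \ref{commutative leading term} at each stage to push the leading-monomial operation through a single $\partial_i$. Write $W_t=x_1^{j_1}\cdots x_{n-1}^{j_{n-1}}$ and recall that $j_k+k=\max\{j_p+p\mid p\in[t+1,n-1]\}$ with $k$ the \emph{least} such index. The degenerate case $k=t+1$ is immediate, since then $v_{t+1}^{W_t}=s_{k-1,t+1}=s_{t,t+1}=1$ and $W_{t+1}=W_t$, so assume $k\ge t+2$. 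First I would check that $W_{t+1}\in B_x$, so that $\mathfrak{S}_{W_{t+1}}$ is even defined: the only exponents needing a bound are those of $x_l$ for $l\in[t+2,k]$, which equal $j_{l-1}$, and here minimality of $k$ forces $j_p+p<j_k+k\le n$ for $p\in[t+1,k-1]$, whence $j_{l-1}+l=(j_{l-1}+(l-1))+1\le n$.

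The heart of the argument is an induction on $m$ showing $\partial_m\cdots\partial_{t+1}\mathfrak{S}_{W_{t+1}}=\mathfrak{S}_{W^{(m)}}$, where $W^{(m)}$ is the monomial with $\deg_{x_l}=j_l$ for $l\le m$, $\deg_{x_{m+1}}=j_k+k-m-1$, $\deg_{x_l}=j_{l-1}$ for $l\in[m+2,k]$, and $\deg_{x_l}=j_l$ for $l\in[k+1,n-1]$; the base case $m=t$ is exactly $W_{t+1}$. To advance from $m$ to $m+1$ (with $m+1\le k-1$) I must verify the hypothesis of Corollary \ref{commutative leading term}, namely $\deg_{x_{m+1}}(W^{(m)})>\deg_{x_{m+2}}(W^{(m)})$; since $m+2\le k$ this reads $j_k+k-m-1>j_{m+1}$, i.e. $j_{m+1}+(m+1)<j_k+k$, which holds precisely because $m+1<k$ and $k$ is the least maximizer. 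Corollary \ref{commutative leading term} then gives $\partial_{m+1}\mathfrak{S}_{W^{(m)}}=\mathfrak{S}_{\overline{\partial_{m+1}W^{(m)}}}$, and the explicit expansion of $\partial_{m+1}$ on a monomial with $\deg_{x_{m+1}}=a>b=\deg_{x_{m+2}}$ (together with the definition of $<$) shows its leading term replaces the pair $(a,b)$ on $(x_{m+1},x_{m+2})$ by $(b,a-1)$, producing exactly $W^{(m+1)}$. At $m=k-1$ every exponent has returned to $j_l$, so $W^{(k-1)}=W_t$ and therefore $\partial_{v_{t+1}^{W_t}}\mathfrak{S}_{W_{t+1}}=\mathfrak{S}_{W_t}$, whose leading monomial is $W_t$ by Theorem \ref{theorem property}.

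For the \emph{in particular} and \emph{Moreover} clauses I would iterate this. Given $W_{i-1}$ satisfying $l_{i-1,1}+1\ge\cdots\ge l_{i-1,i-1}+(i-1)\ge\max_{p\ge i}\{l_{i-1,p}+p\}$, the construction produces $W_i$, and I would check that $W_i$ meets the level-$i$ condition: its shifted exponents are $l_{i-1,1}+1,\ldots,l_{i-1,i-1}+(i-1),\,j_k+k$ in positions $1,\dots,i$, which stay weakly decreasing because $l_{i-1,i-1}+(i-1)=j_{i-1}+(i-1)\ge j_k+k$; while for $p\ge i+1$ one gets $\deg_{x_p}(W_i)+p\le j_k+k$, again by minimality of $k$. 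Starting from an arbitrary $W_0\in B_x$ (the $t=0$ hypothesis being vacuous) this yields the chain $v_1^{W_0},W_1,\ldots,v_{n-2}^{W_{n-3}},W_{n-2}$ with $\overline{\partial_{v_i^{W_{i-1}}}\mathfrak{S}_{W_i}}=W_{i-1}$, and composing the exact identities $\mathfrak{S}_{W_{i-1}}=\partial_{v_i^{W_{i-1}}}\mathfrak{S}_{W_i}$ gives $\overline{\partial_{v_1^{W_0}}\cdots\partial_{v_{n-2}^{W_{n-3}}}\mathfrak{S}_{W_{n-2}}}=W_0$.

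I expect the main obstacle to be purely the bookkeeping: correctly tracking the exponent vector $W^{(m)}$ through the successive $\partial_i$ and confirming the strict drop $\deg_{x_{m+1}}>\deg_{x_{m+2}}$ at \emph{every} stage, all of which rests on the single structural fact that $k$ is the \emph{smallest} index maximizing $j_p+p$ on $[t+1,n-1]$. The places most prone to off-by-one errors are the boundary cases $k=t+2$ and the final step where the target index reaches $k$, so that the block $[m+2,k]$ empties and all exponents simultaneously return to the values $j_l$.
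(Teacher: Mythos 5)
Your proposal is correct and is essentially the paper's own argument: both proofs reduce the claim to a chain of applications of Corollary \ref{commutative leading term} along $\partial_{t+1},\dots,\partial_{k-1}$, with the required strict descent $\operatorname{deg}_{x_{m+1}}>\operatorname{deg}_{x_{m+2}}$ at each stage coming from the minimality of $k$ among maximizers of $j_p+p$. The only difference is packaging --- you track the explicit intermediate exponent vectors $W^{(m)}$ in a forward induction on $m$, while the paper runs an induction on $k-t$ via the auxiliary monomial $W'$ (one swap away from $W_t$) whose unrolling produces exactly your $W^{(m)}$'s.
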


\begin{proof}
Induction on $k-t$.
If $k=t+1$, then $W_{t+1}=W_{t}$, $v_{t+1}^{W_{t}}=s_{k-1,t+1}=1$,   $\overline{\partial_{v_{_{t+1}}^{W_{t}}}\mathfrak{S}_{W_{t+1}}}=W_{t}$.
If $k>t+1$. Let $W^{\prime}=x_{1}^{j_{1}}\cdots x_{k-2}^{j_{k-2}}
\cdot x_{k-1}^{j_{k}+1}x_{k }^{j_{k -1}} \cdot x_{k+1 }^{j_{k+1 }}\cdots
 x_{n-1}^{j_{n-1}}\in B_{x}$. Since $j_{k}+k>j_{k-1}+k-1$, we have $j_{k}+1>j_{k-1}$. By Corollary \ref{commutative leading term}, we have $\overline{\partial_{k-1}\mathfrak{S}_{W'}}=W_{t }$. By induction hypothesis, we have $\overline{\partial_{ s_{k-2,t+1}}\mathfrak{S}_{W_{t+1}}}=W^{\prime}$, i.e., $ \partial_{ s_{k-2,t+1}}\mathfrak{S}_{W_{t+1}}=\mathfrak{S}_{W^{\prime}}$. Consequently,
 $ \overline{\partial_{ s_{k-1,t+1}}\mathfrak{S}_{W_{t+1}}}
 =\overline{\partial_{s_{k-1}}\mathfrak{S}_{W^{\prime}}}
 =W_{t}$. The lemma follows immediately.
\end{proof}

\begin{example}
 $\mathfrak{S}_{x_{k}}=x_{1}+x_{2}+\cdots+x_{k}$.
\end{example}
\begin{proof}
 $\mathfrak{S}_{x_{k}}
=\partial_{k-1} (\mathfrak{S}_{x_{k-1}^{2}})
   =\partial_{k-1}\partial_{k-2} (\mathfrak{S}_{x_{k-2}^{3}})
 =\cdots
 =\partial_{k-1,1}(\mathfrak{S}_{x_{1}^{k}})
 =\partial_{k-1,1}(x_{1}^{k})
=x_{1}+x_{2}+\cdots+x_{k} $.
\end{proof}

\begin{example}
$\mathfrak{S}_{x_{3}x_{4}}=\partial_{s_{1,1}s_{2,1} s_{3,1} s_{4,3}}\mathfrak{S}_{w_{0}^{5}}.$
\end{example}
 \begin{proof}
 $\mathfrak{S}_{x_{3}x_{4}}
 =\partial_{3}\mathfrak{S}_{x_{3}^{2}x_{4}}
 =\partial_{3,2} \mathfrak{S}_{x_{2}^{3}x_{4}}
  =\partial_{3,1} \mathfrak{S}_{x_{1}^{4}x_{4}}
  =\partial_{3,1} \partial_{3}\mathfrak{S}_{x_{1}^{4}x_{3}^{2}}
  =\partial_{3,1} \partial_{3,2} \mathfrak{S}_{x_{1}^{4}x_{2}^{3}}
   =\partial_{3,1} \partial_{3,2} \partial_{3}\partial_{4,3}\mathfrak{S}_{w_{0}^{5}}
   =\partial_{u}\mathfrak{S}_{w_{0}^{5}},
 $
 where $u=s_{3,1}s_{3,2}s_{3}s_{4,3}$. By the Gr\"{o}bner-Shirshov basis of $S_{6}$, we have $[u]=s_{1,1}s_{2,1} s_{3,1} s_{4,3}$.
\end{proof}

In fact, Lemmas \ref{divide degree bigger}, \ref{move one index}  together offer an algorithm to
construct a reduced word $u\in S_{n}$  such that $\overline{\partial_{u}\mathfrak{S}_{w_{0}^{n}}}$  equals
an arbitrary commutative word in $B_{x}$.  And the
Gr\"{o}bner-Shirshov basis of $S_{n}$ offers an algorithm to rewrite any word $u\in S_{n}$ to its
normal form, which can be easily applied to calculate the Schubert Polynomial by Theorem \ref{formula P} or Theorem \ref{formula Q}.
Moreover, Lemma \ref{write leading term} offers an algorithm to write down the leading monomial
of the Schubert polynomial $\partial_{u}\mathfrak{S}_{w_{0}^{n}}$ for any $u\in S_{n}$.
For any $W\in B_{x}$, $f\in \mathbb{Z}[x_{1},\cdots x_{n}]$, denote by $c_{_{W}}(f)$  the coefficient of $W$ in $f$.
Recall that  for any $u\in S_{n}$,
$$
\widetilde{u}=\overline{\partial_{u}\mathfrak{S}_{w_{0}^{n}}}, \
 \mathfrak{S}_{\widetilde{u}}=\partial_{u}\mathfrak{S}_{w_{0}^{n}}.
$$

Now we can offer an algorithm to calculate the structure constants $c_{\widetilde{u},\widetilde{v}}^{\widetilde{w}}$ as follows:

\textbf{Algorithm 1}. For any $u,v\in S_{\infty}$,  by Lemma \ref{write leading term}, we have
$\overline{ \mathfrak{S}_{\widetilde{u}} \mathfrak{S}_{\widetilde{v}}}= \widetilde{u}\widetilde{v} $ and $\mathfrak{S}_{\widetilde{u}} \mathfrak{S}_{\widetilde{v}}$ is monic. By Lemma \ref{move one index} and   then Lemma \ref{divide degree bigger}, we can find a $w_1\in S_{\infty}$, such that $\widetilde{w_1}=\widetilde{u}\widetilde{v}$.  Say $w_1\in S_{n}$. Then using the Gr\"{o}bner-Shirshov basis of $S_{n}$, we can write down the normal form of $w_{1}$.
By Theorem \ref{formula Q} (or Theorem \ref{formula P}), we calculate
$\mathfrak{S}_{\widetilde{w_{1}}}=\partial_{w_{1}}\mathfrak{S}_{w_{0}^{n}}$.
Then we construct an $w_{2}$ such that  $\widetilde{w_{2}}=\overline{ \mathfrak{S}_{\widetilde{u}} \mathfrak{S}_{\widetilde{v}}-\mathfrak{S}_{\widetilde{w_{1}}}}<\widetilde{u}\widetilde{v}$.
 Since each $\mathfrak{S}_{\widetilde{w}}$ for any $w\in S_{\infty}$ is monic and   the leading monomials decrease in each step,  the algorithm works.

 In particular,
For any $u,v\in S_{\infty}$,  we have
$$
 \mathfrak{S}_{\widetilde{u}} \mathfrak{S}_{\widetilde{v}}= \mathfrak{S}_{\widetilde{u}\widetilde{v}}+ \sum_{|\widetilde{w}|=|\widetilde{u}\widetilde{v}|,\widetilde{w}< \widetilde{u}\widetilde{v}} c_{\widetilde{u},\widetilde{v}}^{\widetilde{w}}\mathfrak{S}_{\widetilde{w}}.
 $$
where
  $c_{\widetilde{u},\widetilde{v}}^{\widetilde{w}}
  =c_{\widetilde{w}}(\mathfrak{S}_{\widetilde{u}}\mathfrak{S}_{\widetilde{v}})
  -\sum\limits_{\widetilde{z} \in\{\widetilde{z}\mid \widetilde{w}<\widetilde{z}\leq\widetilde{u}\widetilde{v} \}}
  c_{\widetilde{u},\widetilde{v}}^{\widetilde{z}}c_{\widetilde{w}}(\mathfrak{S}_{\widetilde{z}})
  $
  if
  $ \widetilde{w}<\widetilde{u}\widetilde{v} $.

 \ \

By applying   Monk's formula and Lemma \ref{write leading term}, we  also have another algorithm to calculate the structure constants  $c_{\widetilde{u},\widetilde{v}}^{\widetilde{w}}$  as follows:

\textbf{Algorithm 2}. Induction on min$\{|\widetilde{u}|, |\widetilde{v}|\}$. If min$\{|\widetilde{u}|, |\widetilde{v}|\}\leq 1$, then by Monk's formula, we are done.
If $min\{|\widetilde{u}|, |\widetilde{v}|\}=t\geq 2$,
 say $|\widetilde{u}|=t$.
 Induction on $\widetilde{u}$. If $\widetilde{u}=x_{1}^{t}$, then $\mathfrak{S}_{\widetilde{u}}=x_{1}^{t}=\mathfrak{S}_{x_{1}}\cdot \mathfrak{S}_{x_{1}^{t-1}}$. By induction hypothesis, we have formula
 $$
 \mathfrak{S}_{\widetilde{u}}\mathfrak{S}_{\widetilde{v}}
 =\mathfrak{S}_{x_{1}}\cdot \mathfrak{S}_{x_{1}^{t-1}}\mathfrak{S}_{\widetilde{v}}
 =\mathfrak{S}_{x_{1}}\cdot (\sum_{\widetilde{z}} c_{ x_{1}^{t-1},\widetilde{v}}^{\widetilde{z}}\mathfrak{S}_{\widetilde{z}})
=\sum_{\widetilde{w},{\widetilde{z}} }  c_{x_{1},\widetilde{z}}^{\widetilde{w}}
 c_{ x_{1}^{t-1},\widetilde{v}}^{\widetilde{z}}\mathfrak{S}_{\widetilde{w}}.
 $$
If $\widetilde{u}>x_{1}^{t}$, then $\operatorname{deg}_{x_{k}}\widetilde{u}\geq 1$ for some $k$ (for example,
 choose the smallest one).
By Monk's formula, we have
\begin{align*}
&\mathfrak{S}_{\widetilde{u}}\mathfrak{S}_{\widetilde{v}}&\\
= &(\mathfrak{S}_{x_{k}} \mathfrak{S}_{\frac{\widetilde{u}}{x_{k}}}-\sum_{\widetilde{w}\in \{ \widetilde{w}\mid\widetilde{w}<\widetilde{u}\}} c_{x_{k},\frac{\widetilde{u}}{x_{k}}}^{\widetilde{w}}\mathfrak{S}_{\widetilde{w}}) \mathfrak{S}_{\widetilde{v}}&\\
=&\mathfrak{S}_{x_{k}} \mathfrak{S}_{\frac{\widetilde{u}}{x_{k}}}\mathfrak{S}_{\widetilde{v}}-\sum_{\widetilde{w}\in \{ \widetilde{w}\mid\widetilde{w}<\widetilde{u}\}} c_{x_{k},\frac{\widetilde{u}}{x_{k}}}^{\widetilde{w}}\mathfrak{S}_{\widetilde{w}} \mathfrak{S}_{\widetilde{v}}&\\
=&\mathfrak{S}_{x_{k}} (\sum_{\widetilde{w}} c_{\frac{\widetilde{u}}{x_{k}}, \widetilde{v}}^{\widetilde{w}}\mathfrak{S}_{\widetilde{w}})
-\sum_{\widetilde{w}\in \{ \widetilde{w}\mid\widetilde{w}<\widetilde{u}\}} c_{x_{k},\frac{\widetilde{u}}{x_{k}}}^{\widetilde{w}}\mathfrak{S}_{\widetilde{w}}  \mathfrak{S}_{\widetilde{v}}&\\
=&\sum_{\widetilde{w},\widetilde{z}} c_{x_{k}, \widetilde{w}}^{\widetilde{z}}  c_{\frac{\widetilde{u}}{x_{k}}, \widetilde{v}}^{\widetilde{w}}\mathfrak{S}_{\widetilde{z}}
-\sum_{\widetilde{w}\in \{ \widetilde{w}\mid\widetilde{w}<\widetilde{u}\}} \sum_{\widetilde{z}} c_{x_{k},\frac{\widetilde{u}}{x_{k}}}^{\widetilde{w}} c_{\widetilde{w},\widetilde{v}}^{\widetilde{z}}\mathfrak{S}_{\widetilde{z}}.&
\end{align*}




\begin{thebibliography}{15}
\addcontentsline{toc}{chapter}{Bibliography}


 \bibitem{SAMI ASSAF} Sami Assaf, Multiplication of a Schubert polynomial by a Stanley symmetric polynomial, arXiv:1702.00132v1, 2017.


 \bibitem{A S} Sami Assaf, Dominic Searles,
Schubert polynomials, slide polynomials, Stanley symmetric functions and
quasi-Yamanouchi pipe dreams,
Adv. Math. 306 (2017), 89-122.

\bibitem{BGG}  I.N. Bernstein, I.M. Gelfand and S.I. Gelfand,  Schubert cells and cohomology of the spaces $G/P$, Russian Math, Surveys 28(3) (1973)  1-26. Translated from Uspekhi Mat. Nauk 28 (1973)  3-26.

\bibitem{BJS1993} Sara C. Billey, William  Jockusch   and   Richard P. Stanley,
Some combinatorial properties of Schubert polynomials, J. Algebraic Combin., 2(4) (1993) 345-374.










\bibitem{BokutChenBook}
L.A. Bokut, Yuqun Chen, Gr\"{o}bner-Shirshov Bases and Shirshov Algorithm,
  Educational tutorial lecture notes, Novosibirsk State University, 2014.


\bibitem{survey}L.A. Bokut, Yuqun Chen,
Gr\"{o}bner-Shirshov bases and their calculation,
Bull. Math. Sci., 4(3) (2014) 325-395.

\bibitem{Demazure}Michel Demazure,
D\'{e}singularisation des variétés de Schubert g\'{e}n\'{e}ralis\'{e}es. (French)
Collection of articles dedicated to Henri Cartan on the occasion of his 70th birthday, I.
Ann. Sci. \'{E}cole Norm. Sup. 7(4) (1974)  53-88.


\bibitem{FS}Sergey Fomin,  Richard P. Stanley, Schubert polynomials and the nilCoxeter algebra, Adv. Math. 103(2) (1994)  196-207.


\bibitem{Fulton}William Fulton, Young tableaux: With applications to representation theory and geometry, London Mathematical Society Student Texts, 35. Cambridge University Press, Cambridge, 1997,  ISBN: 0-521-56144-2


\bibitem{Grobner geometry} Allen Knutson, Ezra Miller, Gr\"{o}bner geometry of Schubert polynomials,
Ann. of Math.,   161(3) (2005)  1245-1318.



\bibitem{LS1} Alain Lascoux, Marcel-Paul  Sch\"{u}tzenberger, Polyn\^{o}mes de Schubert,   C. R. Acad. Sci. Paris S\'{e}r. I Math. 294(13) (1982)  447-450.




\bibitem{LS2} Alain Lascoux, Marcel-Paul  Sch\"{u}tzenberger,  Symmetry and flag manifolds, Invariant theory (Montecatini, 1982) 118-144,
Lecture Notes in Math., 996, Springer, Berlin, 1983.

\bibitem{growth diagrams} Cristian Lenart,
Growth diagrams for the Schubert multiplication,
J. Combin. Theory Ser. A, 117(7) (2010)  842-856.






\bibitem{Mac}I.G. Macdonald, Notes on Schubert polynomials, Publications du LaCIM, Vol. 6, Universit\'{e} du Qu\'{e}bec \`{a} Montr\'{e}al, 1991.



\bibitem{Manivel}  Laurent Manivel,  Symmetric Functions, Schubert Polynomials and Degeneracy Loci, SMF/AMS
Texts and Monographs 6, Amer. Math. Soc., Providence, RI, and Soc. Math. France, Paris
2001.


\bibitem{Monk} D. Monk, The geometry of flag manifolds, Poc. London Math. Soc., 9(3) (1959) 253-286.

\bibitem{Sagan} Bruce E. Sagan, The symmetric group: representations, combinatorial algorithms, an symmetric functions, Second Edition, Springer-Verlag New York Berlin Heidelberg, 　ISBN 0-387-95067-2, pp 1-2.

\bibitem{pieri} Frank Sottile,
Pieri's formula for flag manifolds and Schubert polynomials,
Ann. Inst. Fourier    46(1) (1996)  89-110.


\bibitem{Winkel recursive} Rudolf Winkel,  Recursive and combinatorial properties of Schubert polynomials,
S\'{e}m. Lothar. Combin. 38 (1996),  29 pp.


\bibitem{Winkel multiplication} Rudolf Winkel,
On the multiplication of Schubert polynomials,
Adv. in Appl. Math.， 20(1) (1998) 73-97.



\bibitem{Winkel diagram}Rudolf Winkel, Diagram rules for the generation of Schubert polynomials,
J. Combin. Theory, Series A 86(1) (1999) 14-48.
















\end{thebibliography}
\end{document}